\def\tank#1{\protected@xdef\@thanks{\@thanks
        \protect\footnotetext[0]{#1}}}
\def\bigfoot{

    \@footnotetext}
\newcommand{\ea}{\end{array}}
\newtheorem{theorem}{Theorem}[section]
\newtheorem{proposition}{Proposition}[section]
\newtheorem{claim}{Claim}[section]
\newtheorem{lemma}{Lemma}[section]
\newtheorem{definition}{Definition}[section]
\newtheorem{remark}{Remark}[section]
\newtheorem{example}{Example}[section]
\newenvironment{proof}{Proof.}
\begin{document}
\title{\Large \bf Stochastic generalized porous media equations over $\sigma$-finite
  measure spaces with non-continuous diffusivity function}

\author{{Michael R\"{o}ckner}$^{a,d}$\footnote{E-mail:roeckner@math.uni-bielefeld.de (M. R\"{o}ckner)}~~~{Weina Wu}$^{b,a}$\footnote{E-mail:wuweinaforever@163.com (W.N. Wu)}~~~ {Yingchao Xie}$^c$\footnote{E-mail:ycxie@jsnu.edu.cn (Y.C. Xie)}
\\
 \small  a. Faculty of Mathematics, University of Bielefeld, D-33615 Bielefeld, Germany.\\
 \small  b. Nanjing University of Finance and Economics, Nanjing, Jiangsu 210023, China.\\
 \small  c. School of Mathematics and Statistics, Jiangsu Normal University, Xuzhou, Jiangsu 221000, China.\\
 \small  d. Academy of Mathematics and Systems Science, Chinese Academy of Sciences, Beijing 100190, China.}\,
\date{}
\maketitle

\begin{center}
\begin{minipage}{130mm}
{\bf Abstract:} In this paper, we prove that stochastic porous media equations over $\sigma$-finite measure spaces $(E,\mathcal{B},\mu)$,
driven by time-dependent multiplicative noise, with the Laplacian
replaced by a self-adjoint transient Dirichlet operator $L$ and the diffusivity function given by a maximal monotone multi-valued function $\Psi$ of polynomial growth, have a unique solution. This generalizes previous results in that we work on general measurable state spaces, allow non-continuous monotone functions $\Psi$, for which, no further assumptions (as e.g. coercivity) are needed, but only that their multi-valued extensions are maximal monotone and of at most polynomial growth. Furthermore, an $L^p(\mu)$-It\^{o} formula in expectation is proved, which is not only crucial for the proof of our main result, but also of independent interest. The result in particular applies to fast diffusion stochastic porous media equations (in particular self-organized criticality models) and cases where $E$ is a manifold or a fractal, and to non-local operators $L$, as e.g. $L=-f(-\Delta)$, where $f$ is Bernstein function.

\vspace{3mm} {\bf Keywords:} Wiener process; porous media equation; Dirichlet form; maximal monotone graph; Yosida approximation; $L^p$-It\^{o} formula in expectation.

\vspace{3mm} {\bf AMS 2020 Mathematics Subject Classification:} 60H15; 76S05
\end{minipage}
\end{center}

\section{Introduction}\label{introduction}
\setcounter{equation}{0}
 \setcounter{definition}{0}
The purpose of this paper is to solve multi-valued stochastic porous media equations
(SPMEs) on $(E,\mathcal{B},\mu)$ of the following type:

\begin{equation}\label{eq:1}
\left\{ \begin{aligned}
&dX(t)-L\Psi(X(t))dt\ni B(t,X(t))dW(t),\ \text{in}\ [0,T]\times E,\\
&X(0)=x \text{~on~} E~ (x\in \mathscr{F}_e^*),
\end{aligned} \right.
\end{equation}
where $(E,\mathcal{B})$ is a standard measurable space (see \cite{P67}) with a $\sigma$-finite measure $\mu$ and $T\in(0,\infty)$ is fixed. $(L,D(L))$ is the generator of a symmetric strongly continuous contraction sub-Markovian semigroup on $L^2(\mu)$, which additionally is assumed to be the generator of transient Dirichlet form, $\mathscr{F}_e^*$ is the dual space of the corresponding extended transient Dirichlet space $\mathscr{F}_e$ (cf. Section \ref{Dirichletspaces} below).
$\Psi(\cdot):\mathbb{R}\rightarrow2^{\mathbb{R}}$ denotes a maximal
monotone graph with polynomial growth (cf. \textbf{(H1)} in Section \ref{Assumptions} below, $\Psi$ is called diffusivity function, see \cite{V}). $B$ is a Hilbert-Schmidt
operator-valued map fulfilling certain Lipschitz and growth conditions (cf. \textbf{(H2)} and \textbf{(H3)} in Section \ref{Assumptions} below). $W$ is an $L^2(\mu)$-valued
cylindrical $\mathscr{F}_t$-adapted Wiener process on a probability
space $(\Omega,\mathscr{F},\mathbb{P})$ with normal filtration
$(\mathscr{F}_t)_{t\geq0}$. Explicit assumptions and more explanations will be given in
Section \ref{Assumptions}.

At least since \cite{DRJEE}, there has been a lot of papers concerning stochastic porous media equations, e.g., about strong solutions (\cite{BDPR081, BDR2, BDR3, BDR, BRR, DRRW, Kim}), stochastic variational inequalities (\cite{GRSIAM,N}) or finite time extinction of solutions (\cite{BDR3, BDR, BRR, GCMP}) to
\begin{equation}\label{eq:1.2}
dX(t)-\Delta\Psi(X(t))= B(t,X(t))dW(t)~~ \text{on}~~\mathcal{O},
\end{equation}
with maximal monotone (multi-valued) diffusivity $\Psi$, where $\mathcal{O}$ is an open and bounded subset of the Euclidean space $\Bbb{R}^d$ (see also \cite{BDR1, LR} and references there in). In the classical deterministic case, i.e., $B\equiv0$ and $\Psi(r)=|r|^{m-1}r$, $r\in\Bbb{R}$, $m\geq1$ (see \cite{ADG}), for $x=$ a probability density on $\mathcal{O}$, its solution $X(t)$, $t\geq0$, describes the time evolution for the density of a substance in a porous medium. Heuristically (because $\Psi$ is in general not even assumed to be (single-valued) continuous and $\xi\mapsto X(t)(\xi)$ is not $C^2$) applying the chain rule we have
\begin{eqnarray}\label{eq:1.20}
\Delta\Psi(X(t))=\Psi'(X(t))\Delta X(t)+\Psi''(X(t))|\nabla X(t)|^2.
\end{eqnarray}
This shows that $\Psi'(X(t))$ is the (solution dependent) diffusion coefficient of the equation. This explains the name ``diffusivity (function)" for $\Psi$ and why $\Psi$ must be assumed to be increasing. If $\Psi$ is strictly increasing, which corresponds to $\Psi'>0$ on $\Bbb{R}$ in \eqref{eq:1.20}, this would mean that we have local strict ellipticity in \eqref{eq:1.20}, hence we would be in the nondegenerate case. We stress, however, that in this paper we do not assume this, so the degenerate case is covered. \eqref{eq:1.20} also reveals why it is important to include multivalued diffusivities, because it implies that we can cover non-continuous $\Psi$ (See Example \ref{example for psi} below). This means that its generalized derivative $\Psi'$ (in the sense of Schwartz distributions) would be a weighted Dirac measure $\delta_{r_0}$ at a point of discontinuity $r_0$ of $\Psi$. So, if we consider the time evolution $t\mapsto X(t)(\xi)$ of the density of the substance at a point $\xi\in E$ and if it ``hits" such a discontinuity point $r_0\in\Bbb{R}$ of $\Psi$, the diffusion coefficient $\Psi'(X(t)(\xi))$ would jump to $+\infty$, describing a ``very large" diffusion of ``the system" at that moment, which is an interesting case of high relevance, e.g. in physics. This is also the reason why the solution to \eqref{eq:1.2} are sometimes called singular diffusions. A prominent example is the so-called self-organized criticality (SOC) model developed by Bak, Tang and Wiesenfeld \cite{BTW88}, which can also be used to model the dynamics of phase-transition (including melting and solidification processes) as well as for the description of a large class of other diffusion problems. It remains to ``justify" the type of noise in \eqref{eq:1.2}. For a general explanation of this we refer \cite[Page:3, Section 1.2]{LR}, which explains that the noise must have this term under some reasonable assumptions.

One interesting direction of research is to replace $-\Delta$ by a nonlocal pseudodifferential operator (e.g., $(-\Delta)^\alpha$, $\alpha\in(0,1]$, cf. \cite{T} or more generally $f(-\Delta)$, where $f$ is a Bernstein function, see \cite{SSV}) in \eqref{eq:1.2}, or even more generally by a self-adjoint operator $L$ satisfying certain properties, as e.g., being the generator of a transient Dirichlet form. But there exist many interesting operators on $\Bbb{R}^d$ which are not self-adjoint on $L^2(\Bbb{R}^d,dx)$ (where $dx$ denotes the Lebesgue
measure), but on $L^2(\Bbb{R}^d,\mu)$ for some other measure $\mu$ replacing $dx$, as for instance the Friedrichs extension of the operator
$L_0=\Delta +2\frac{\nabla\rho}{\rho}\cdot\nabla$ on $L^2(\mathbb{R}^d, \rho^2dx)$ (\cite{RWX}), where $\rho\in H^1(\mathbb{R}^d)$ and $H^1$ is the usual
Sobolev space. It is also interesting to change $\Bbb{R}^d$ (the ``state space") and replace it by a smooth or even not smooth Riemannian manifold as e.g., a fractal, or allow infinite dimensional state spaces, e.g., the Wiener space. Furthermore, it is very desirable to do all this for multivalued diffusivity functions $\Psi$, as explained above. So, all this motivates the study of \eqref{eq:1}, in such a general form.

There are few results about \eqref{eq:1} on general measure spaces. One paper known to the authors about \eqref{eq:1} with $\Psi$ being a multi-valued graph is \cite[Example 7.3]{GTJMPA}, in which the existence and uniqueness of (limit) solutions and the ergodicity for \eqref{eq:1} are proved. However, in \cite{GTJMPA}, $(E, \mathcal{B}, \mu)$ is assumed to be a finite measure space and $L$ to have compact resolvent, and $\Psi$ is of linear growth, which simplifies the situation substantially. Furthermore, in \cite{GTJMPA} the extended transient Dirichlet space $\mathscr{F}_e$ is the same as the Dirichlet space $F_{1,2}$, which results in a Gelfand triple $L^2(\mu)\subset \mathscr{F}_e^*(=F_{1,2}^*)\subset (L^2(\mu))^*$. The proof of the well posedness theorem in \cite{GTJMPA} heavily relies on this Gelfand triple. We work on general $\sigma$-finite measure spaces with no further conditions on $L$, so it turns out that we must construct solutions in the smaller state space $\mathscr{F}_e^*$. Also the idea of the proof in \cite{GTJMPA} is based on a viscosity approximation, while our proof is based on the Yosida approximation. We would also like to mention another paper, where \eqref{eq:1.2} was studied with multivalued $\Psi$ and with L\'{e}vy noise replacing the Wiener noise in \eqref{eq:1.2}, namely in Section 6 of \cite{LS}. However, this is done only on $(\mathcal{O},\mathcal{B}(\mathcal{O}),dx)$ and $\mathcal{O}$ is assumed to have finite Lebesgue measure.

The present work was also motivated by papers \cite{BRR}, \cite{RRW} and \cite{RWX}. In \cite{BRR} existence and uniqueness of solutions for \eqref{eq:1.2} with linear multiplicative noise was proved for multivalued $\Psi$ with $E:=\mathcal{O}=\Bbb{R}^d$, $d\geq3$, and one of our aims here is to generalize this result to equation \eqref{eq:1} on general measure spaces and include more examples of $L$, where e.g. it is the generator of a transient Dirichlet form not only on $\Bbb{R}^d$, but also on e.g. a manifold or a fractal (cf. Examples \ref{example for local E}-\ref{nonlocal E} in Section \ref{Applications}). In \cite{RRW}, by constructing a suitable Gelfand triple with $\mathscr{F}_e^*$ as the piv\^ot space and using the variational framework (\cite{PR, LR}), the first named author of the present paper and his collaborators proved existence and uniqueness for the following stochastic generalized porous media equation in the state space $\mathscr{F}^*_e$:
\begin{eqnarray}\label{exp2}
dX(t)=(L\Psi(t,X(t))+\Phi(t,X(t)))dt+B(t,X(t))dW(t),
\end{eqnarray}
where $L$ is as above and $\Psi$ is continuous, single-valued and maximal monotone, additionally satisfying a number of other somewhat restrictive conditions (see \cite[Page: 135, condition (A1)]{RRW}, in particular, ($\Psi2$) and ($\Psi3$), where the appearing Young function $N$ is assumed to be $\Delta_2$-regular), which, in particular, imply that $\Psi(r)\rightarrow\infty$ as $r\rightarrow\infty$. In the case where $\Phi\equiv0$ and $\Psi$ is a Lipschitz increasing function which is independent of $t$ and $\omega$, the well-posedness of strong solutions to \eqref{exp2} in $F^*_{1,2}$ was proved in \cite{RWX}. In contrast to that, in the present paper, $\Psi$ is not assumed to be either Lipschitz or single-valued. In particular, we can cover the SOC model which is not included in \cite{RRW} or \cite{RWX}. A second aim of our paper is to generalize the results in \cite{RRW} and \cite{RWX} to multi-valued diffusivities $\Psi$, being just maximal monotone and of at most polynomial growth, with no further assumptions such as e.g. that $\lim_{r\rightarrow\infty}\Psi(r)=\infty$, which allows our framework to apply to the SOC model (cf. Example \ref{example for psi} in Section \ref{Applications}). Our method is completely different from \cite{RRW}, but it is a generalization of that in \cite{BRR}. Because of our much more general situation, the methods and techniques available for investigating SPMEs on $\Bbb{R}^d$ are insufficient for covering the case of general measure spaces and cannot be extended to general measure spaces in a straightforward way. Let us now describe our method. Though the overall strategy is borrowed from \cite{BRR}, some severe obstacles had to be overcome in our case, one of which was to find a proper version of It\^{o}'s formula for the $L^p$-norm of solutions for processes taking values in $L^p(E,\mathcal{B},\mu)$ with no further assumption on $(E,\mathcal{B},\mu)$, so that our results apply to general state spaces $E$, as those mentioned above.

As a first step, we consider Yosida approximating equations to \eqref{eq:1} of the following form with initial value $X_\lambda(0)\in\mathscr{F}_e^*$:
\begin{eqnarray}\label{eq:333}
dX_\lambda(t)-L(\Psi_\lambda(X_\lambda(t))+\lambda X_\lambda(t))dt=B(t,X_\lambda(t))dW(t),\ \ t\in(0,T).
\end{eqnarray}
Here $\lambda>0$ and
\begin{eqnarray*}
\Psi_\lambda(x)=\frac{1}{\lambda}\big(x-(1+\lambda\Psi)^{-1}(x)\big)\in
\Psi\big((1+\lambda\Psi)^{-1}(x)\big)
\end{eqnarray*}
is the Yosida approximation of $\Psi$, which is Lipschitz continuous. One of the main points in this paper is first to prove the well posedness of \eqref{eq:333} in $\mathscr{F}_e^*$. To this end, we consider the following approximating equations to \eqref{eq:333}
\begin{eqnarray}\label{eq:3333}
dX^\nu_\lambda(t)+(\nu-L)(\Psi_\lambda(X^\nu_\lambda(t))+\lambda X^\nu_\lambda(t))dt=B(t,X^\nu_\lambda(t))dW(t),\ \ t\in(0,T), 0<\nu\leq1,
\end{eqnarray}
with initial value $X^\nu_\lambda(0)\in F^*_{1,2}$ (which contains $\mathscr{F}_e^*$, see Section \ref{Dirichletspaces} below). This approach allows us to estimate $\|X^\nu_\lambda\|_{F^*_{1,2,\nu_0}}$, $0<\nu_0\leq1$, where $\|\cdot\|_{F^*_{1,2,\nu_0}}$ is an equivalent norm on $F^*_{1,2}$ (cf. Section \ref{Dirichletspaces} below). The key point here is to relate $\|\cdot\|_{F^*_{1,2,\nu_0}}$ with $\|\cdot\|_{\mathscr{F}^*_e}$ to prove the convergence of $X^\nu_\lambda$ to $X_\lambda$ in $\mathscr{F}^*_e$ as $\nu\rightarrow0$ (see Proposition \ref{theorem2} and the last part of the proof of Theorem \ref{Theorem}).

To prove the convergence of solutions to \eqref{eq:3333} as $\nu\rightarrow0$ to those of \eqref{eq:333} and in turn that solutions to \eqref{eq:333} as $\lambda\rightarrow0$ converge to those of \eqref{eq:1}, since $\Psi$ has a growth of at most order $m\geq1$ (see \textbf{(H1)} in Section \ref{Assumptions}), one has to control the $\|\cdot\|_{L^{2m}}$ norm of the solutions uniformly in the approximation parameters. To obatin such bounds (see \eqref{4.4} and \eqref{eqnarray12}) we need to apply It\^{o}'s formula to $|X^\nu_\lambda(t)|_{2m}^{2m}$. But this is not possible directly since $X^\nu_\lambda$ is not (right) continuous in $L^{2m}(\mu)$. Therefore, we consider the  following approximating equation to \eqref{eq:3333}:
 \begin{equation} \label{eq:33333}
dX^{\nu,\varepsilon}_\lambda(t)+A^{\nu,\varepsilon}_\lambda(X^{\nu,\varepsilon}_\lambda(t))dt=B(t, X^{\nu,\varepsilon}_\lambda(t))dW(t),\ \text{in}\ (0,T)\times E,\\
\end{equation}
where $\varepsilon\in (0,1)$ and
\begin{eqnarray*}
A^{\nu,\varepsilon}_\lambda(x)=\frac{1}{\varepsilon}\big(x-(I+\varepsilon
A^\nu_\lambda)^{-1}(x)\big),
\end{eqnarray*}
is the Yosida approximation of the operator $A^\nu_\lambda(x):=(\nu-L)(\Psi_\lambda(x)+\lambda I(x))$ and $I$ denotes the identity map on the respective space (see more details in Section 4 below). In \cite[Section 4]{BRR} the authors can directly use the $L^p$-It\^{o} formula ($p\geq2$) proved in \cite{NK} for the case $E=\Bbb{R}^d$, which is not possible here. Also one cannot expect to prove such a formula on arbitrary $\sigma$-finite measure spaces without further assumptions. The reason is that in \cite{NK}, approximations by convolution with smooth functions were crucial, which heavily depend on the linear structure of $\Bbb{R}^d$. To overcome this difficulty, we prove an $L^p(\mu)$-It\^{o} formula ($p\geq2$) in expectation (see Subsection \ref{itoformula}) to get the crucial a priori $L^{2m}(\mu)$-estimates first for the solution to \eqref{eq:33333} uniformly in the approximation parameters $\varepsilon, \nu, \lambda$, subsequently, by letting $\varepsilon\rightarrow0$ and then $\nu\rightarrow0$ we get a solution to \eqref{eq:333} satisfying \eqref{eqnarray12} (-\eqref{eqnarray14}) (see Theorem \ref{Theorem} below), which then allows us finally to take $\lambda\rightarrow0$ to obtain well-posedness of \eqref{eq:1} (see Theorem \ref{theorem1}). So, the $L^p$-It\^{o} formula in expectation (see Theorem \ref{ito} below), which we think is also of its own interest, is the main new tool that leads to the desired generalization of \cite{BRR}. Apart from other smaller obstacles that had to be overcome in comparison to \cite{BRR} (see e.g. Proposition \ref{Proposition7.1} in the Appendix), there is one other novel point we would like to mention here. Since our aim was to be able to treat non-local operators (such as pseudo-differential operators) $L$ in \eqref{eq:1} in our approach, we had to include non-local extended Dirichlet forms in our framework. We identified the crucial condition on the square field operators of the non-local Dirichlet forms needed to implement our approach. This is condition \textbf{(H4)(ii)} which is fulfilled for an abundance of (local and) non-local Dirichlet forms (see the examples in Section \ref{Applications}), in particular for such with generator $L=-f(-\Delta)$, where $f:[0,\infty)\rightarrow\Bbb{R}$ is a Bernstein function (in the sense of \cite{SSV}). Such examples for the operator $L$ together with discontinuous diffusivities in \eqref{eq:1} are completely new.

\smallskip
This paper is organized as follows. In Section \ref{Notations}, we introduce some
notations and recall some known results for preparation. In
addition, we prove some necessary technical auxiliary results, which will be
used to construct the solutions to \eqref{eq:1} in
$\mathscr{F}^*_e$. In Section \ref{Assumptions}, we present our assumptions and the two main results for \eqref{eq:1} and \eqref{eq:333}. A detailed proof of the existence result for \eqref{eq:333} will be given in Section \ref{ProofofTheorem3.2}, while the existence and uniqueness result for \eqref{eq:1} will be given in Section \ref{ProofofTheorem3.1}. A number of examples that are covered by our framework will be presented in Section \ref{Applications}, including local (nonlocal) operators $L$ on manifold or fractals. In order to make the main structure of the proofs more transparent, we shift the proofs of some estimates to Appendix \ref{Auxiliaryresults}. In addition, we present a detailed proof of the mentioned $L^p(\mu)$-It\^{o} formula ($p\geq2$) in expectation in Appendix \ref{itoformula}.

\section{Notations and preliminaries}\label{Notations}
\setcounter{equation}{0}
 \setcounter{definition}{0}
\subsection{Dirichlet spaces}\label{Dirichletspaces}
Let $(E,\mathcal{B},\mu)$ be a $\sigma$-finite measure space, which we fix in the entire paper. We assume that $(E, \mathcal{B})$ is a standard measurable space (i.e., $\sigma$-isomorphic to a Polish space, see \cite{P67}). This assumption is used in the proof of the $L^p(\mu)$-It\^{o} formula ($p\geq2$) in expectation, but also in the proof of Lemma \ref{lemma1} below, where we apply \cite[Lemma 5.1]{RW}, in which this assumption on $(E, \mathcal{B})$ was crucially used. Let $(P_t)_{t\geq0}$ be a strongly continuous, symmetric, sub-Markovian contraction semigroup on $L^2(\mu)$. Let $(L, D(L))$ be its infinitesimal generator (see e.g. \cite{FOT,MR}), which is a negative
definite self-adjoint operator on $L^2(\mu)$. We
use $\langle\cdot, \cdot\rangle_2$ and $|\cdot|_2$ for the inner
product and the norm in $L^2(\mu)$ respectively. More generally, we
set $\langle f, g\rangle_2:=\mu(fg):=\int fg d\mu$ for any two
measurable functions $f$, $g$ such that $fg\in L^1(\mu)$. For the rest of this paper we fix $(P_t)_{t\geq0}$ with generator $(L,D(L))$ on $L^2(\mu)$ with $(E,\mathcal{B},\mu)$ as above.

Consider the $\Gamma$-transform $V_r(r>0)$ of $(P_t)_{t\geq0}$
\begin{eqnarray*}
V_ru=\Gamma(\frac{r}{2})^{-1}\int_0^\infty
s^{\frac{r}{2}-1}e^{-s}P_suds, ~r>0,~u\in L^2(\mu).
\end{eqnarray*}
From \cite{Fukushima}, we can define the Bessel-potential space $(F_{1,2},\|\cdot\|_{F_{1,2}})$ by
$$
F_{1,2}:=V_1(L^2(\mu)),~\text{with~norm}~\|u\|_{F_{1,2}}=|f|_2,~~\text{for}~~u=V_1f,~~ f\in L^2(\mu),
$$
where the norm $|\cdot|_2$ is defined as $|f|_2=\langle f,f\rangle_2:=(\int_E |f|^2d\mu)^{\frac{1}{2}}$,
consequently,
$$
V_1=(1-L)^{-\frac{1}{2}},~~\text{so~that}~~F_{1,2}=D\big((1-L)^{\frac{1}{2}}\big)~~\text{and}~~\|u\|_{F_{1,2}}=|(1-L)^{\frac{1}{2}}u|_2.
$$
The dual space of $F_{1,2}$ is denoted by $F^*_{1,2}$ and $F^*_{1,2}=D((1-L)^{-\frac{1}{2}})$, it is equipped with the norms
\begin{eqnarray*}\label{eqnarray5}
\|\eta\|_{F^*_{1,2,\nu}}:=\langle \eta, (\nu-L)^{-1}\eta\rangle_2^{\frac{1}{2}},~~\eta\in F^*_{1,2},~~0<\nu<\infty.
\end{eqnarray*}
Denote the duality between $F^*_{1,2}$ and $F_{1,2}$ by
$_{F^*_{1,2}}\langle \cdot,\cdot\rangle_{F_{1,2}}$.

\vspace{2mm}
Consider the Dirichlet form $(\mathscr{E}, D(\mathscr{E}))$ on
$L^2(\mu)$ associated with $(L, D(L))$, i.e.,
\begin{eqnarray*}
   && D(\mathscr{E}):= F_{1,2},~~ \text{and} \\
   && \mathscr{E}(u,v):=\mu(\sqrt{-L}u \sqrt{-L}v),~~ u,v\in F_{1,2}.
\end{eqnarray*}
Let $D(\mathscr{E})$ be equipped with the inner product
$\mathscr{E}_1:=\mathscr{E}+\langle\cdot,\cdot\rangle_2$.

If $(\mathscr{E},D(\mathscr{E}))$ is a transient Dirichlet space, that is,
there exists $g\in L^1(\mu)\cap L^\infty(\mu)$, $g>0$, such that
$\mathscr{F}_e\subset L^1(g\cdot\mu)$ continuously, let $(\mathscr{E},\mathscr{F}_e)$ be the corresponding extended Dirichlet space (see \cite{FOT}), which
is the completion of $F_{1,2}$, with respect to the norm
$$\|\cdot\|_{\mathscr{F}_e}:=\mathscr{E}(\cdot,\cdot)^{\frac{1}{2}}.$$
Then $F_{1,2}=\mathscr{F}_e\cap L^2(\mu)$. Let $\mathscr{F}^*_e$ be its dual space with inner product
$\langle\cdot,\cdot\rangle_{\mathscr{F}^*_e}$ and corresponding norm
$\|\cdot\|_{\mathscr{F}^*_e}$, which is induced by the Riesz map
$\mathscr{F}_e\ni u\mapsto \mathscr{E}(\cdot,u)\in \mathscr{F}^*_e$.
Denote the duality between $\mathscr{F}^*_e$ and $\mathscr{F}_e$ by
$_{\mathscr{F}^*_e}\langle \cdot,\cdot\rangle_{\mathscr{F}_e}$. Both $\mathscr{F}_e$ and $\mathscr{F}^*_e$
are Hilbert spaces. For more background knowledge on Dirichlet forms, we refer to \cite{FOT, MR}. From now on we assume:

\vspace{2mm}
\textbf{(L.1)} The symmetric Dirichlet form $(\mathscr{E},D(\mathscr{E}))$ associated with $(L,D(L))$ is transient.

\vspace{2mm}
Consider the inner product
$\mathscr{E}_\nu:=\mathscr{E}+\nu \langle\cdot, \cdot \rangle_2,~ \nu\in(0,\infty)$, on
$F_{1,2}$, i.e.,
\begin{eqnarray}\label{eqnal1}
  \|v\|^2_{F_{1,2,\nu}}:=\mathscr{E}(v,v)+\nu\int|v|^2d\mu=\|v\|^2_{\mathscr{F}_e}+\nu\int|v|^2d\mu,\
\text{for}\ v\in F_{1,2},
\end{eqnarray}
and
\begin{eqnarray*}
 \|l\|_{F^*_{1,2,\nu}}:=_{F^*_{1,2}}\langle l, (\nu-L)^{-1}l\rangle^{\frac{1}{2}}_{F_{1,2}}:=\sup_{\substack{v\in
F_{1,2}\\ \|v\|_{F_{1,2,\nu}}\leq1}}l(v),~l\in F_{1,2}^*,
\end{eqnarray*}
\begin{eqnarray*}
 \|l\|_{\mathcal{F}^*_e}:=\sup_{\substack{v\in
\mathscr{F}_e\\ \|v\|_{\mathscr{F}_e}\leq1}}l(v),~l\in \mathscr{F}^*_e.
\end{eqnarray*}
Since $F_{1,2}\subset\mathcal{F}_e$ continuously and densely, we have
\begin{eqnarray*}
\mathscr{F}^*_e\subset F^*_{1,2}\ \text{continuously\ and\ densely}.
\end{eqnarray*}

\begin{proposition}\label{theorem2}
Let $l\in\mathscr{F}_e^*$. Then $\nu\mapsto\|l\|_{F^*_{1,2,\nu}}$ is decreasing,
\begin{eqnarray}
 &&\lim_{\nu\rightarrow0}\|l\|_{F^*_{1,2,\nu}}=\sup_{\nu>0}\|l\|_{F^*_{1,2,\nu}}=\|l\|_{\mathscr{F}_e^*}, \label{eqnal6}\\
&&\|l\|_{F^*_{1,2}}\leq \|l\|_{F^*_{1,2,\nu}}\leq
\frac{1}{\sqrt{\nu}}\|l\|_{F^*_{1,2}},~~\forall~0<\nu\leq1.\label{eqnal19}
\end{eqnarray}
\end{proposition}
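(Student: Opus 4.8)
The plan is to reduce all three assertions to the variational descriptions of the dual norms in \eref{eqnal2}, combined with elementary pointwise comparisons of the primal norms $\|\cdot\|_{F_{1,2,\nu}}$ coming from \eref{eqnal1}. The starting observation is that for a \emph{fixed} $v\in F_{1,2}$ the map $\nu\mapsto\|v\|^2_{F_{1,2,\nu}}=\|v\|^2_{\mathscr{F}_e}+\nu|v|_2^2$ is nondecreasing and affine in $\nu$, satisfies $\|v\|_{\mathscr{F}_e}\le\|v\|_{F_{1,2,\nu}}$ for every $\nu>0$, and converges to $\|v\|_{\mathscr{F}_e}$ as $\nu\downarrow0$. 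Recall also from \eref{eqnal20} that $l\in\mathscr{F}_e^*$ does define an element of $F^*_{1,2}$, so $\|l\|_{F^*_{1,2,\nu}}$ is well defined and, by \eref{eqnal2}, equals $\sup\{l(v):v\in F_{1,2},\ \|v\|_{F_{1,2,\nu}}\le1\}$.

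For the monotonicity, since $\nu\mapsto\|v\|_{F_{1,2,\nu}}$ increases pointwise, the unit balls $\{v\in F_{1,2}:\|v\|_{F_{1,2,\nu}}\le1\}$ shrink as $\nu$ grows; taking the supremum of $l$ over a shrinking family of sets shows that $\nu\mapsto\|l\|_{F^*_{1,2,\nu}}$ is decreasing, whence $\lim_{\nu\to0}\|l\|_{F^*_{1,2,\nu}}=\sup_{\nu>0}\|l\|_{F^*_{1,2,\nu}}$. For the identification with $\|l\|_{\mathscr{F}_e^*}$ in \eref{eqnal6}, the inequality ``$\le$'' is immediate: if $v\in F_{1,2}$ with $\|v\|_{F_{1,2,\nu}}\le1$, then $\|v\|_{\mathscr{F}_e}\le1$, hence $l(v)\le\|l\|_{\mathscr{F}_e^*}$ by \eref{eqnal2}, and taking the supremum over such $v$ gives $\|l\|_{F^*_{1,2,\nu}}\le\|l\|_{\mathscr{F}_e^*}$ for every $\nu$.

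The reverse inequality is the main point, and the difficulty is that the near-optimizers of $\|l\|_{\mathscr{F}_e^*}$ live in $\mathscr{F}_e$, which is genuinely larger than $F_{1,2}$ in the transient setting. I would proceed as follows. Fix $\varepsilon>0$ and, assuming $l\ne0$, choose $v_0\in\mathscr{F}_e$ with $\|v_0\|_{\mathscr{F}_e}\le1$ and $l(v_0)\ge\|l\|_{\mathscr{F}_e^*}-\varepsilon$. Since $\mathscr{F}_e$ is by definition the completion of $F_{1,2}$ under $\|\cdot\|_{\mathscr{F}_e}$ and $l$ is $\|\cdot\|_{\mathscr{F}_e}$-continuous, pick $w\in F_{1,2}$ with $\|w-v_0\|_{\mathscr{F}_e}$ so small that $l(w)\ge\|l\|_{\mathscr{F}_e^*}-2\varepsilon$ and $\|w\|_{\mathscr{F}_e}\le1+\varepsilon$. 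Now $w$ is frozen and lies in $F_{1,2}=\mathscr{F}_e\cap L^2(\mu)$, so $|w|_2<\infty$ and $\|w\|^2_{F_{1,2,\nu}}=\|w\|^2_{\mathscr{F}_e}+\nu|w|_2^2\to\|w\|^2_{\mathscr{F}_e}$ as $\nu\downarrow0$. Testing \eref{eqnal2} with $w/\|w\|_{F_{1,2,\nu}}$ gives $\|l\|_{F^*_{1,2,\nu}}\ge l(w)/\|w\|_{F_{1,2,\nu}}$, and letting $\nu\downarrow0$ yields $\sup_{\nu>0}\|l\|_{F^*_{1,2,\nu}}\ge l(w)/\|w\|_{\mathscr{F}_e}\ge(\|l\|_{\mathscr{F}_e^*}-2\varepsilon)/(1+\varepsilon)$; letting $\varepsilon\downarrow0$ closes the gap. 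The one subtlety to handle carefully is the order of the two limits: $w$ must be fixed before sending $\nu\downarrow0$, so that the term $\nu|w|_2^2$ vanishes for the fixed approximant rather than being spoiled by letting $w\to v_0$ simultaneously (and positivity of $l(w)$ for small $\varepsilon$ guarantees $w\ne0$, so the test element is legitimate).

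Finally, \eref{eqnal19} follows from the same ideas with $\nu=1$ as the reference norm, using $\|\cdot\|_{F^*_{1,2}}=\|\cdot\|_{F^*_{1,2,1}}$. The left inequality is just the monotonicity applied between $\nu<1$ and $1$. For the right inequality, for $0<\nu<1$ and $v\in F_{1,2}$ one has $\|v\|^2_{F_{1,2,1}}=\mathscr{E}(v,v)+|v|_2^2\le\nu^{-1}\big(\mathscr{E}(v,v)+\nu|v|_2^2\big)=\nu^{-1}\|v\|^2_{F_{1,2,\nu}}$; hence if $\|v\|_{F_{1,2,\nu}}\le1$ then $l(v)\le\|l\|_{F^*_{1,2}}\|v\|_{F_{1,2,1}}\le\nu^{-1/2}\|l\|_{F^*_{1,2}}$, and taking the supremum gives $\|l\|_{F^*_{1,2,\nu}}\le\nu^{-1/2}\|l\|_{F^*_{1,2}}$.
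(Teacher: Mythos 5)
Your proposal is correct and follows essentially the same route as the paper's proof: monotonicity of the primal norms $\|\cdot\|_{F_{1,2,\nu}}$ gives the decreasing property and the easy inequality, density of $F_{1,2}$ in $\mathscr{F}_e$ produces a near-optimizer on which the term $\nu|w|_2^2$ is killed by fixing $w$ before sending $\nu\downarrow0$, and the bound $\sqrt{\nu}\,\|v\|_{F_{1,2}}\leq\|v\|_{F_{1,2,\nu}}$ yields \eref{eqnal19}. The only cosmetic differences are that you approximate in two steps ($v_0\in\mathscr{F}_e$, then $w\in F_{1,2}$) where the paper picks the near-optimizer directly in $F_{1,2}$, and you take the limit $\nu\downarrow0$ where the paper instead chooses an explicit small $\nu_0=\delta^2/(1+|v_\varepsilon|_2^2)$ and rescales the test element.
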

\begin{proof}
Firstly, note that for all $l\in F_{1,2}^*$ and $0<\nu'\leq \nu<\infty$, we have
\begin{eqnarray*}
\|l\|_{F^*_{1,2,\nu}}=:
\sup_{\substack{{v\in
F_{1,2}}\\{\|v\|_{F_{1,2,\nu}\leq1}}}}l(v)\leq \sup_{\substack{{v\in
F_{1,2}}\\{\|v\|_{F_{1,2,\nu'}\leq1}}}}l(v)=\|l\|_{F^*_{1,2,\nu'}},
\end{eqnarray*}
i.e., $\forall~l\in F^*_{1,2}$, $\|l\|_{F^*_{1,2,\nu}}$ is decreasing in $\nu$. In particular, the first equality in \eqref{eqnal6} and the first inequality in \eqref{eqnal19} hold.

Let $l\in \mathscr{F}_e^*$. Since $\mathscr{F}_e^*\subset F^*_{1,2}$ continuously and densely, we have $l\in
F^*_{1,2}$ and
\begin{eqnarray*}
  \|l\|_{F^*_{1,2,\nu}}=\sup_{\substack{{v\in
F_{1,2}}\\{\|v\|_{F_{1,2,\nu}}\leq1}}}l(v)\leq \sup_{\substack{{v\in
\mathscr{F}_e}\\{\|v\|_{\mathscr{F}_e}\leq1}}}l(v)=\|l\|_{\mathscr{F}_e^*}.
\end{eqnarray*}
Hence $\forall\ l\in \mathscr{F}_e^*$,
\begin{eqnarray}\label{eqnal5}
  \lim_{\nu\rightarrow0}\|l\|_{F^*_{1,2,\nu}}=\sup_{\nu>0}\|l\|_{F^*_{1,2,\nu}}\leq
\|l\|_{\mathscr{F}_e^*}.
\end{eqnarray}

To prove the converse inequality of \eqref{eqnal5}, fix $l\in \mathscr{F}_e^*$ and let
$\varepsilon,\ \delta\in(0,1)$. Then there exists $v_\varepsilon\in
F_{1,2}$ with $\|v_\varepsilon\|_{\mathscr{F}_e}=1$ and
$$l(v_\varepsilon)\geq \|l\|_{\mathscr{F}_e^*}-\varepsilon.$$
Let $\nu_0:=\frac{\delta^2}{1+|v_\varepsilon|_2^2}$. From \eqref{eqnal1}, we see that
\begin{eqnarray*}
  \|v_\varepsilon\|_{F_{1,2,\nu_0}}=\sqrt{\|v_\varepsilon\|^2_{\mathscr{F}_e}+\nu_0|v_\varepsilon|^2_2}\leq
\sqrt{1+\delta^2}\leq1+\delta,
\end{eqnarray*}
so for $\bar{v_\varepsilon}:=\frac{v_\varepsilon}{1+\delta}$, we
have
$$\|\bar{v_\varepsilon}\|_{F_{1,2,\nu_0}}\leq1.$$

Consequently,
\begin{eqnarray*}\label{eqnal3}
  \lim_{\nu\rightarrow0}\|l\|_{F^*_{1,2,\nu}}&&\!\!\!\!\!\!\!\!=\sup_{\nu>0}\|l\|_{F^*_{1,2,\nu}}\nonumber\\
  &&\!\!\!\!\!\!\!\!\geq\|l\|_{F^*_{1,2,\nu_0}}\geq l(\bar{v_\varepsilon})=\frac{1}{1+\delta}l(v_\varepsilon)\geq
  \frac{1}{1+\delta}(\|l\|_{\mathscr{F}^*_e}-\varepsilon),
\end{eqnarray*}
letting $\delta\rightarrow0$, $\varepsilon\rightarrow0$, yields the desired converse inequality. Hence \eqref{eqnal6} is proved.

It remains to prove the second inequality in \eqref{eqnal19}. Note that
$$\|v\|^2_{F_{1,2,\nu}}=\mathscr{E}(v,v)+\nu\langle v,v\rangle_2~~~\text{and}~~~\|v\|^2_{F_{1,2}}=\mathscr{E}(v,v)+\langle v,v\rangle_2,$$
hence for $0<\nu\leq1$,
\begin{eqnarray*}
\|l\|_{F^*_{1,2}}=\sup_{\|v\|_{F_{1,2}}\leq1}l(v)
=\sqrt{\nu}\sup_{\sqrt{\nu}\|v\|_{F_{1,2}}\leq1}l(v)
\geq\sqrt{\nu}\sup_{\|v\|_{F_{1,2,\nu}}\leq1}l(v)
=\sqrt{\nu}\|l\|_{F^*_{1,2,\nu}}.
\end{eqnarray*}
\end{proof}

\vspace{2mm}
\subsection{Gelfand triples}\label{Gelfandtriples}
Let $H$ be a separable Hilbert space with inner product
$\langle\cdot, \cdot\rangle_H$ and let $H^*$ be its dual space. Let $V$ be a
reflexive Banach space, such that $V\subset H$ continuously and
densely. Then for its dual space $V^*$ it follows that $H^*\subset
V^*$ continuously and densely. Identifying $H$ and $H^*$ via the
Riesz isomorphism we have that
\begin{eqnarray*}
V\subset H\subset V^*
\end{eqnarray*}
continuously and densely. Let $_{V^*}\langle\cdot, \cdot\rangle_V$
denote the dualization between $V^*$ and $V$ (i.e. $_{V^*}\langle
z,v \rangle_V:=z(v)$ for $z\in V^*$, $v\in V$). Then it follows that
\begin{eqnarray}\label{eqnarray48}
_{V^*}\langle z,v \rangle_V=\langle z,v\rangle_H,\ \text{for\ all}\
z\in H,\ v\in V.
\end{eqnarray}
$(V,H,V^*)$ is called a Gelfand triple.

\vspace{2mm}

A Gelfand triple with $V:=L^2(\mu)$, $H:=F^*_{1,2}$ was constructed in \cite{RWX}, the Riesz map which identifies $F_{1,2}$ and $F^*_{1,2}$ is $(1-L)^{-1}:F^*_{1,2}\rightarrow F_{1,2}$. We need the following two lemmas.
\begin{lemma}\label{rwx1}(\cite[Lemma 2.1.]{RWX})
The map $(1-L): F_{1,2}\rightarrow F^*_{1,2}$ is an isometric
isomorphism. In particular,
\begin{equation}\label{equation2}
\big\langle (1-L)u,~ (1-L)v \big\rangle_{F^*_{1,2}}=\langle u,~ v
\rangle_{F_{1,2}} \ \ \text{for all}\ \ u, v \in F_{1,2}.
\end{equation}
Furthermore, $(1-L)^{-1}: F^*_{1,2}\rightarrow F_{1,2}$ is the Riesz
isomorphism for $F^*_{1,2}$, i.e., for every $u \in F^*_{1,2}$,
\begin{equation}\label{equation3}
\langle u,~\cdot\rangle_{F^*_{1,2}}=\!\!_{F_{1,2}}\langle
(1-L)^{-1}u,~ \cdot \rangle_{F^*_{1,2}}.
\end{equation}
\end{lemma}

\begin{lemma}\label{rrw}(\cite[Lemma 3.3(i)]{RRW})
The map $\bar{L}:\mathscr{F}_e\rightarrow \mathscr{F}_e^*$
defined by
\begin{eqnarray*}\label{eqnarray35}
 \bar{L}v:=-\mathscr{E}(v,\cdot),\ v\in \mathscr{F}_e
\end{eqnarray*}
(i.e. the Riesz isomorphism of $\mathscr{F}_e$ and $\mathscr{F}_e^*$
multiplied by (-1)) is the unique continuous linear extension of the
map
\begin{eqnarray*}\label{eqnarray36}
  D(L)\ni v\mapsto \mu(Lv\cdot)\in \mathscr{F}_e^*.
\end{eqnarray*}
\end{lemma}

If there is no danger of confusion, we write $L$ instead of $\bar{L}$ below. For a Hilbert space $\mathbb{H}$, throughout the paper, let $L^2([0,T]\times\Omega;\mathbb{H})$ denote the space of all $\mathbb{H}$-valued square-integrable functions on
$[0,T]\times\Omega$, $C([0,T];\mathbb{H})$ the space of all
continuous $\mathbb{H}$-valued functions on $[0,T]$, and $L^\infty([0,T],\Bbb{H})$ the space of all $\Bbb{H}$-valued uniformly bounded measurable functions on $[0,T]$. For two
Hilbert spaces $H_1$ and $H_2$, the space of Hilbert-Schmidt
operators from $H_1$ to $H_2$ is denoted by $L_2(H_1, H_2)$. For
simplicity, the positive constants $C$, $C_i$, $i=1,...,5$, used in this paper may change from line to line. We would like
to refer to \cite{LR,PR} for more background information and results on SPDEs, \cite{ADG,V} on PMEs and \cite{BDR1} on
SPMEs.

\section{Assumptions and main results}\label{Assumptions}
\setcounter{equation}{0}
 \setcounter{definition}{0}

Let $K:=L^1(\mu)\cap L^\infty(\mu)\cap \mathscr{F}^*_e$. In addition to condition \textbf{(L.1)} above, we study Eq.\eqref{eq:1} under the following
assumptions.

\medskip
\noindent \textbf{(H1)} $\Psi(\cdot): \mathbb{R}\rightarrow\!
2^\mathbb{R}$ is a maximal monotone graph such that $0\in \Psi(0)$ and there exist $C\in(0,\infty)$ and $m\in[1,\infty)$ such that
\begin{eqnarray}\label{eqnarray1}
   &&\inf\{|\eta|;\eta \in \Psi(r)\}\leq C(|r|^m+1_{\{\mu(E)<\infty\}}),\ \ \forall~ r\in
\mathbb{R}.
\end{eqnarray}

\vspace{1mm}

\noindent \textbf{(H2)} $B\!: [0, T]\times K\times
\Omega\rightarrow L_2(L^2(\mu), L^2(\mu))$ is progressively
measurable (i.e. for any $t\in[0,T]$, this mapping restricted to
$[0,t]\times K\times \Omega$ is measurable w.r.t.
$\mathscr{B}([0,t])\times\mathscr{B}(K)\times \mathscr{F}_t$,
where $\mathscr{B}(\cdot)$ is the Borel $\sigma$-field for a
topological space). Furthermore, $B(t,u)$ satisfies the following conditions:
\noindent \textbf{(i)}~~ There exists $C_1\in[0, \infty)$ such that
for all $\nu\in(0,1]$,
$$\|B(\cdot, u)-B(\cdot, v)\|_{L_2(L^2(\mu), F^*_{1,2,\nu})}\leq C_1\|u-v\|_{ F^*_{1,2,\nu}},~~\forall u, v\in K\ \text{on}\ [0, T]\times \Omega,$$
where for simplicity, we write $B(t,u)$ meaning the mapping $\omega\mapsto B(t,u,\omega)$.

\noindent \textbf{(ii)}~~ There exists $C_2\in(0, \infty)$ such that
for all $\nu\in(0,1]$,
$$\|B(\cdot, u)\|_{L_2(L^2(\mu), F^*_{1,2,\nu})}\leq C_2\|u\|_{ F^*_{1,2,\nu}},~~\forall u\in K\ \text{on}\ [0, T]\times \Omega.$$

\vspace{1mm}
\medskip
\noindent \textbf{(H3)}\noindent \textbf{(i)}~~There exists $C_3\in(0, \infty)$ satisfying
$$\|B(\cdot, u)\|_{L_2(L^2(\mu), L^2(\mu))}\leq C_3|u|_2,~~\forall u\in K\ \text{on}\ [0, T]\times \Omega.$$
\noindent \textbf{(ii)}~~Let $m$ be as in \eqref{eqnarray1}, there exist an orthonormal basis $\{e_k\}_{k\geq1}$ of $L^2(\mu)$ and $C_4\in(0, \infty)$ satisfying
$$\int_E\big(\sum_{k=1}^{\infty}|B(\cdot,u)e_k|^2\big)^md\mu\leq C_4|u|_{2m}^{2m},~~\forall u\in K\ \text{on}\ [0, T]\times\Omega.$$

\vspace{2mm}
\noindent \textbf{(H4)}~~There exists a symmetric, positive, bilinear mapping $\Gamma:F_{1,2}\times F_{1,2}\rightarrow L^1(\mu)$ satisfying:

 \noindent \textbf{(i)}
\begin{eqnarray*}
  \mathscr{E}(u,u)=\int \frac{1}{2}\Gamma(u,u)d\mu,~~\forall u\in
F_{1,2}.
\end{eqnarray*}
\vspace{2mm}

\noindent \textbf{(ii)}
There exists a constant $C_5\in(0,\infty)$ such that
$$\Gamma(\varphi(u),\varphi(u))\leq C_5\Gamma(u,\varphi(u)),~\forall u\in F_{1,2},$$
for every non-decreasing Lipschitz function $\varphi:\Bbb{R}\rightarrow\Bbb{R}$ with $\varphi(0)=0$.
\begin{remark}\label{mainremark}
\textbf{(i)} \eqref{eqnal6} and \textbf{(H2)(i)} imply that for all $u,v\in K$,
\begin{eqnarray}\label{eqnal0}
\|B(\cdot, u)-B(\cdot, v)\|^2_{L_2(L^2(\mu), \mathscr{F}^*_e)}\leq C_1\|u-v\|^2_{\mathscr{F}^*_e}\ \ \text{on}\ [0, T]\times \Omega.
\end{eqnarray}

\textbf{(ii)} We emphasize that \textbf{(H4)(ii)} is automatically fulfilled, if $(\mathcal{E},D(\mathcal{E}))$ is a local Dirichlet form.

\textbf{(iii)} By \textbf{(L.1)} there exists $g\in L^1(\mu)\cap L^\infty(\mu)$, $g>0$, $\mu$-a.e., such that $\mathscr{F}_e\subset L^1(g\cdot \mu)$ continuously and it was proved in \cite{RRW} (see the last part of the proof of Proposition 3.1 in \cite{RRW}) that the linear space
$$\mathscr{G}:=\{h\cdot g|h\in L^\infty(\mu)\}$$
is dense in $\mathscr{F}^*_e$. Furthermore, obviously $\mathscr{G}\subset L^1(\mu)\cap L^\infty(\mu)$. Hence it follows that $K$ (defined in \textbf{(H2)}) is dense in $\mathscr{F}_e^*$, and hence in $F^*_{1,2,\nu_0}$ for every $\nu_0\in(0,1]$. Here $F^*_{1,2,\nu_0}$ denotes the space $F^*_{1,2}$ equipped with the norm $\|\cdot\|_{F^*_{1,2,\nu_0}}$. Therefore, by \textbf{(H2)(i)} the map
\begin{eqnarray*}
K\ni u\longrightarrow B(t,u)\in L_2(L^2(\mu), F^*_{1,2,\nu_0})
\end{eqnarray*}
can be extended uniquely to a Lipschitz continuous map on all of $F^*_{1,2,\nu_0}$. Furthermore, \textbf{(H2)(ii)} trivially also holds for this extension, as well as \eqref{eqnal0}. We shall use this extension below without further notice.

\textbf{(iv)} From \cite[Proposition 3.1]{RRW}, we also know that $\mathscr{G}$ is dense in $L^p(\mu)$ ($1<p<\infty$), since $\mathscr{G}\subset L^1(\mu)\cap L^\infty(\mu)$, $\mathscr{G}\subset\mathcal{F}_e^*$, $L^1(\mu)\cap L^\infty(\mu)$ is dense in $L^p(\mu)$, hence $K$ is dense in $L^p(\mu)$, so \textbf{(H3)(i)} is also true for all $u\in L^2(\mu)$ and \textbf{(H3)(ii)} is true for all $u\in L^{2m}(\mu)$. Note that if $m=1$, then \textbf{(H3)(i)} and \textbf{(H3)(ii)} are the same.
\end{remark}


\begin{definition}\label{definition1}
Let $x\in \mathscr{F}_e^*$. An
$\mathscr{F}_e^*$-valued adapted process $X=X(t)$ is called strong
solution to \eqref{eq:1} if there exists $q\in [2,\infty)$ such that the following conditions hold:
\begin{eqnarray*}
   &&X\ \text{is}\ \mathscr{F}_e^*-{valued\ continuous\ on}\ [0,T],\
\mathbb{P}-a.s.;  \\\label{eqnarray7}
   &&X\in L^q(\Omega \times (0,T)\times E);\label{eqnarray8}
\end{eqnarray*}
there is $\eta \in L^{\frac{q}{m}}(\Omega\times (0,T)\times E)$ such
that
\begin{eqnarray*}\label{eqnarray9}
&&\eta \in \Psi(X),\ dt \otimes \mathbb{P} \otimes d\mu-a.e.\
\text{on}\ \Omega \times (0,T)\times E;
\end{eqnarray*}
and $\mathbb{P}$-$a.s$.,
\begin{eqnarray}\label{eqnarray3.2}
\int_0^\cdot \eta(s)ds\in C([0,T];\mathscr{F}_e),
\end{eqnarray}
\begin{equation*}\label{equa4}
X(t)=x+L\int_0^t\eta(s)ds+\int_0^tB(s,X(s))dW(s)\ \ \text{for\ all}\
t\in [0,T].
\end{equation*}
\end{definition}

Theorem \ref{theorem1} below is the main existence and uniqueness result for Eq.\eqref{eq:1}.


\begin{theorem}\label{theorem1}
Assume that \textbf{(L.1)}, \textbf{(H1)}-\textbf{(H4)} are satisfied and let m be as in \eqref{eqnarray1}. Let $x\!\in L^2(\mu)\cap L^{2m}(\mu)\cap
\mathscr{F}_e^*$. Then there is a unique strong solution $X$
to \eqref{eq:1} such that
\begin{eqnarray*}\label{eqnarray10}
  &&X\in L^2\big(\Omega; C([0,T];\mathscr{F}_e^*)\big)\cap L^{\infty}\big([0,T];(L^2\cap L^{2m})(\Omega\times E)\big).
\end{eqnarray*}
\end{theorem}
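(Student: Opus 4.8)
The plan is to obtain the solution of \eref{eq:1} as the limit, as $\lambda\to0$, of the solutions $X_\lambda$ of the Yosida-regularized equations \eref{eq:333}, whose existence together with the uniform a priori bounds is supplied by Theorem \ref{Theorem}. Write $\eta_\lambda:=\Psi_\lambda(X_\lambda)$ and $y_\lambda:=(1+\lambda\Psi)^{-1}(X_\lambda)$, so that $\eta_\lambda\in\Psi(y_\lambda)$, $X_\lambda=y_\lambda+\lambda\eta_\lambda$ and $|y_\lambda|\le|X_\lambda|$ pointwise (since $(1+\lambda\Psi)^{-1}$ is a contraction fixing $0$, as $0\in\Psi(0)$). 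From the uniform $L^{2m}(\mu)$-bound in Theorem \ref{Theorem} and the polynomial growth \eref{eqnarray1} one gets $|\eta_\lambda|_2^2\le C(|X_\lambda|_{2m}^{2m}+\mu(E)1_{\{\mu(E)<\infty\}})$, hence $\sup_{\lambda>0}\mathbb E\int_0^T|\eta_\lambda|_2^2\,dt<\infty$. These three uniform bounds (in $\mathscr{F}_e^*$, in $L^{2m}(\mu)$, and on $\eta_\lambda$ in $L^2$) are the fuel for all the limit passages.

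First I would show that $(X_\lambda)$ is Cauchy in $L^2(\Omega;C([0,T];\mathscr{F}_e^*))$. For $\lambda,\lambda'>0$ put $Z:=X_\lambda-X_{\lambda'}$; since both carry the initial value $x$, we have $Z(0)=0$, and $Z$ solves an SDE in the Hilbert space $\mathscr{F}_e^*$ with drift $LG$, where $G:=(\eta_\lambda+\lambda X_\lambda)-(\eta_{\lambda'}+\lambda' X_{\lambda'})\in F_{1,2}$, and martingale part driven by $B(\cdot,X_\lambda)-B(\cdot,X_{\lambda'})$. Applying Itô's formula to $\|Z\|_{\mathscr{F}_e^*}^2$ and using the duality identity $\langle Lv,w\rangle_{\mathscr{F}_e^*}=-\langle v,w\rangle_2$ for $v\in F_{1,2}$, $w\in\mathscr{F}_e^*\cap L^2(\mu)$, the drift equals $-2\int_E G\,Z\,d\mu$. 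Decomposing $Z=(y_\lambda-y_{\lambda'})+(\lambda\eta_\lambda-\lambda'\eta_{\lambda'})$, the monotonicity of $\Psi$ makes $-2\int(\eta_\lambda-\eta_{\lambda'})(y_\lambda-y_{\lambda'})\,d\mu\le0$, while all remaining terms are, by Young's inequality, bounded by $C(\lambda+\lambda')(|\eta_\lambda|_2^2+|\eta_{\lambda'}|_2^2+|X_\lambda|_2^2+|X_{\lambda'}|_2^2)$. The noise contribution is controlled by $C_1\|Z\|_{\mathscr{F}_e^*}^2$ via \eref{eqnal0}. Taking expectations, using the uniform bounds and applying Gronwall's lemma gives $\sup_{[0,T]}\mathbb E\|Z\|_{\mathscr{F}_e^*}^2\le C(\lambda+\lambda')$; a Burkholder--Davis--Gundy estimate on the martingale part then upgrades this to $\mathbb E\sup_{[0,T]}\|Z\|_{\mathscr{F}_e^*}^2\le C(\lambda+\lambda')$, so that $X_\lambda\to X$ in $L^2(\Omega;C([0,T];\mathscr{F}_e^*))$.

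Next I would identify the limit as a strong solution and prove uniqueness. The uniform bounds give, along a subsequence, $X_\lambda\rightharpoonup X$ weakly in $L^{2m}(\Omega\times(0,T)\times E)$ and $\eta_\lambda\rightharpoonup\eta$ weakly in $L^2(\Omega\times(0,T)\times E)$; since $\lambda\eta_\lambda\to0$ in $L^2$, also $y_\lambda\rightharpoonup X$. Passing to the limit is then routine for the stochastic integral (by \eref{eqnal0} and \textbf{(H3)}) and for the drift, $L\int_0^\cdot(\eta_\lambda+\lambda X_\lambda)\,ds\to L\int_0^\cdot\eta\,ds$ (weak convergence plus continuity of $L:\mathscr{F}_e\to\mathscr{F}_e^*$ and $\lambda X_\lambda\to0$), while the regularity $\int_0^\cdot\eta\,ds\in C([0,T];\mathscr{F}_e)$ in \eref{eqnarray3.2} follows from the corresponding uniform bound on $\int_0^\cdot\eta_\lambda\,ds$ in Theorem \ref{Theorem} by weak lower semicontinuity. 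Uniqueness is obtained by the same Itô computation applied to the difference of two solutions $X^1-X^2$ with selections $\eta^i\in\Psi(X^i)$: the drift $-2\int(\eta^1-\eta^2)(X^1-X^2)\,d\mu$ is nonpositive by monotonicity, the noise term is dominated by $C_1\|X^1-X^2\|_{\mathscr{F}_e^*}^2$ via \eref{eqnal0}, and Gronwall forces $X^1=X^2$.

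The hard part will be the inclusion $\eta\in\Psi(X)$, $dt\otimes\mathbb P\otimes d\mu$-a.e. Since $\Psi$ is only maximal monotone (multi-valued, possibly discontinuous), one cannot pass to the limit pointwise; instead one realizes $\Psi$ as a maximal monotone superposition operator and invokes its demiclosedness, which requires the energy inequality $\limsup_\lambda\mathbb E\int_0^T\!\!\int_E\eta_\lambda y_\lambda\,d\mu\,dt\le\mathbb E\int_0^T\!\!\int_E\eta X\,d\mu\,dt$. In the general $\sigma$-finite setting there is no compactness or Sobolev embedding available, so this $\limsup$ must be squeezed out of the Itô identity for $\|X_\lambda\|_{\mathscr{F}_e^*}^2$ of Theorem \ref{Theorem}: one writes $\eta_\lambda X_\lambda=\eta_\lambda y_\lambda+\lambda|\eta_\lambda|^2\ge\eta_\lambda y_\lambda$, expresses $\mathbb E\iint\eta_\lambda X_\lambda$ through the identity, and lets $\lambda\to0$ using the strong convergence of $X_\lambda$ in $\mathscr{F}_e^*$, the continuity of $B$ in the noise-energy term, and the vanishing of the $O(\lambda)$ regularization terms, comparing against the analogous identity for the limiting $X$. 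Reconciling this strong $\mathscr{F}_e^*$-convergence with the weak $L^{2m}$-convergence so that the two candidate limits are genuinely identified is the most delicate point.
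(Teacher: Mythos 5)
Your plan follows the paper's proof in all essentials: pass to the limit $\lambda\to0$ in \eref{eq:3} using the bounds of Theorem \ref{Theorem}, identify the drift limit and the regularity \eref{eqnarray3.2}, establish the energy inequality \eref{eqnal16} by comparing It\^{o} identities for $\|X_\lambda\|^2_{\mathscr{F}_e^*}$ and $\|X\|^2_{\mathscr{F}_e^*}$, conclude $\eta\in\Psi(X)$ from maximal monotonicity, and get uniqueness by It\^{o} plus Gronwall. Two genuine differences are worth noting. First, your opening step re-derives the Cauchy property of $(X_\lambda)$ in $L^2(\Omega;C([0,T];\mathscr{F}_e^*))$; this is exactly estimate \eref{eqnarray14} of Theorem \ref{Theorem}, so it is redundant (and, as sketched, it also presumes $\Psi_\lambda(X_\lambda(s))\in F_{1,2}$ for fixed $s$, which is not known). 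Second, for the inclusion $\eta\in\Psi(X)$ you invoke demiclosedness of the realization of $\Psi$ in $L^{m+1}\times L^{\frac{m+1}{m}}$ applied to the resolvent pairs $(y_\lambda,\eta_\lambda)$, $y_\lambda:=(1+\lambda\Psi)^{-1}(X_\lambda)$, whereas the paper works with the convex potentials $j_\lambda$ and the auxiliary equation \eref{eq:8}, $J(Z)+\Psi(Z)\ni J(X)+\eta$ with $J(Z)=|Z|^{m-1}Z$. Your variant is legitimate (it is the standard Minty-type closedness lemma for maximal monotone graphs under a $\limsup$ pairing condition) and arguably more direct; both rest on the same inequality \eref{eqnal16}.

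The genuine gap is the justification of the It\^{o} formulas on which your energy inequality and your uniqueness argument rest. For the limit equation the drift is built from $\eta$, which is only in $L^{\frac{m+1}{m}}([0,T]\times\Omega\times E)$; $\eta(s)$ does not lie in $\mathscr{F}_e$, so $L\eta(s)$ is not an element of $\mathscr{F}_e^*$, and \cite[Theorem 4.2.5]{LR} cannot be applied with state space $\mathscr{F}_e^*$ alone. Moreover, Theorem \ref{Theorem} supplies no It\^{o} identity for $\|X_\lambda(t)\|^2_{\mathscr{F}_e^*}$ either (only the solution properties \eref{equ:2.2}--\eref{equ:2.3} and the estimates \eref{eqnarray12}--\eref{eqnarray14}); even for $X_\lambda$ the integrand $\Psi_\lambda(X_\lambda(s))+\lambda X_\lambda(s)$ is merely in $L^2(\mu)$, not in $\mathscr{F}_e$, for fixed $s$. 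The same issue invalidates, as stated, the duality identity you use for uniqueness, since $\eta_1-\eta_2$ is merely an $L^{\frac{m+1}{m}}$-function. The paper's resolution is a further construction: the reflexive space $V=L^{m+1}(\mu)\cap\mathscr{F}_e^*$, dense in both $\mathscr{F}_e^*$ and $L^{m+1}(\mu)$ by \cite[Proposition 3.1]{RRW}, yielding the Gelfand triple $V\subset\mathscr{F}_e^*\subset V^*$, together with the continuous extension $\widetilde{L}:L^{\frac{m+1}{m}}(\mu)\to V^*$ satisfying \eref{7.1}. Only within this triple do the identities \eref{eqnarray37}, \eref{eqnarray38} and the uniqueness computation \eref{eqnarray58} make sense, after which the comparison indeed gives \eref{eqnal16} (in fact with equality). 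This construction --- not the reconciliation of the strong $\mathscr{F}_e^*$-limit with the weak $L^{2m}$-limit, which is routine --- is the delicate point in this general $\sigma$-finite setting, and your proposal does not address it.
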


Theorem \ref{theorem1} will be proved in Section \ref{ProofofTheorem3.1}. The proof is based on an approximating equation of \eqref{eq:1}. More precisely, in Section \ref{ProofofTheorem3.2} we shall establish the existence of solutions for the following Yosida approximating equation of \eqref{eq:1}
\begin{equation}\label{eq:3}
\left\{ \begin{aligned}
&dX_\lambda-L(\Psi_\lambda(X_\lambda)+\lambda X_\lambda)dt=B(t,X_\lambda)dW(t),\ \ t\in[0,T],\\
&X_\lambda(0)=x \ \text{on} \ E.
\end{aligned} \right.
\end{equation}
Here $\lambda>0$ and
\begin{eqnarray*}
\Psi_\lambda(x)=\frac{1}{\lambda}\big(x-(1+\lambda\Psi)^{-1}(x)\big)\in
\Psi\big((1+\lambda\Psi)^{-1}(x)\big)
\end{eqnarray*}
is the Yosida approximation of $\Psi$. We recall that (see \cite[page:38, Proposition 2.2]{B}) $\Psi_\lambda$ is single-valued, monotone and for all $r\in\Bbb{R}$
\begin{eqnarray}\label{eqnal01}
|\Psi_\lambda(r)|\leq \inf|\Psi(r)|.
\end{eqnarray}

We have the following result for Eq.\eqref{eq:3}.
\begin{theorem}\label{Theorem}
Assume that \textbf{(L.1)}, \textbf{(H1)}-\textbf{(H4)} are satisfied and let $m$ be as in \eqref{eqnarray1}. Let $\lambda\in(0,1)$, and $x\in L^2(\mu)\cap L^{2m}(\mu)\cap \mathscr{F}_e^*$. Then \eqref{eq:3} has a strong solution
\begin{eqnarray}\label{eqnarray11}
X_\lambda\in L^2(\Omega; C([0,T];\mathscr{F}_e^*))\cap L^{\infty}\big([0,T];(L^2\cap L^{2m})(\Omega\times E)\big),
\end{eqnarray}
satisfying
\begin{equation}\label{equ:2.2}
\int_0^\cdot \Psi_\lambda(X_\lambda(s))+\lambda X_\lambda(s)ds\in C([0,T];\mathscr{F}_e)~
\Bbb{P}\text{-a.s.},
\end{equation}
and $\Bbb{P}$-a.s.,
\begin{equation}\label{equ:2.3}
X_\lambda(t)=x+L\int_0^t\Psi_\lambda(X_\lambda(s))+\lambda X_\lambda(s)ds+\int_0^tB(s,X_\lambda(s))dW(s),\ \forall t\in[0,T].
\end{equation}
Moreover, there exists $C\in(0,\infty)$ such that for all $\lambda,\ \lambda'\in(0,1)$, $t\in[0,T]$,
\begin{eqnarray}
&&\mathbb{E}|X_\lambda(t)|_{2m}^{2m}\leq C|x|_{2m}^{2m},
 \label{eqnarray12}\\
   && \mathbb{E}\int_0^T
    \int_E|\Psi_\lambda(X_\lambda(t))|^2d\mu dt\leq
    C(|x|_{2m}^{2m}+\mu(E)\cdot1_{\{\mu(E)<\infty\}}), \label{eqnarray13}\\
        && \mathbb{E}\Big[\sup_{0\leq t\leq
    T}\|X_\lambda(t)\|^2_{\mathscr{F}_e^*}\Big]\leq
    C\big(\|x\|^2_{\mathscr{F}_e^*}+|x|_{2m}^{2m}+|x|_2^2+\mu(E)\cdot 1_{\{\mu(E)<\infty\}}\big),\label{eqnarray15}\\
  && \mathbb{E}\Big[\sup_{0\leq t\leq
T}\|X_\lambda(t)-X_{\lambda'}(t)\|^2_{\mathscr{F}_e^*}\Big]\leq
C(\lambda+\lambda')(|x|_2^2+|x|_{2m}^{2m}+\mu(E)\cdot1_{\{\mu(E)<\infty\}}).\label{eqnarray14}
\end{eqnarray}
\end{theorem}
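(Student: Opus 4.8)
The plan is to construct $X_\lambda$ as a double limit of solutions to the regularized equations \eref{eq:3333} and \eref{eq:33333}, following the scheme sketched in the introduction. For fixed $\lambda,\nu,\varepsilon\in(0,1)$ I first solve \eref{eq:33333}. Since $A^{\nu,\varepsilon}_\lambda$ is the Yosida approximation of $A^\nu_\lambda=(\nu-L)(\Psi_\lambda+\lambda I)$, which is $m$-accretive on $F^*_{1,2}$ equipped with the inner product $\langle\cdot,(\nu-L)^{-1}\cdot\rangle_2$ (and $A^\nu_\lambda(0)=0$, because $0\in\Psi(0)$), it is globally Lipschitz and monotone. Together with the Lipschitz continuity of $B$ from \textbf{(H2)(i)} and its unique Lipschitz extension to all of $F^*_{1,2,\nu_0}$ (cf.\ the remark following \textbf{(H4)}), standard existence--uniqueness theory for SDEs with Lipschitz coefficients in a Hilbert space yields a unique $X^{\nu,\varepsilon}_\lambda\in L^2(\Omega;C([0,T];F^*_{1,2}))$.

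Second --- the heart of the proof --- I derive a priori bounds uniform in $(\varepsilon,\nu,\lambda)$. The $L^2(\mu)$-bound is routine from Itô in the triple together with \textbf{(H3)(i)}. For the key $L^{2m}(\mu)$-bound \eref{eqnarray12} the classical Itô formula is unavailable, since $X^{\nu,\varepsilon}_\lambda$ is continuous only in $F^*_{1,2}$ and not in $L^{2m}(\mu)$; instead I apply the $L^p(\mu)$-Itô formula in expectation (Theorem \ref{ito}) to $|X^{\nu,\varepsilon}_\lambda(t)|_{2m}^{2m}$. Writing $y=(I+\varepsilon A^\nu_\lambda)^{-1}X^{\nu,\varepsilon}_\lambda$, so that $A^{\nu,\varepsilon}_\lambda(X^{\nu,\varepsilon}_\lambda)=A^\nu_\lambda(y)$, monotonicity of $r\mapsto|r|^{2m-2}r$ gives $\langle A^{\nu,\varepsilon}_\lambda(X^{\nu,\varepsilon}_\lambda),|X^{\nu,\varepsilon}_\lambda|^{2m-2}X^{\nu,\varepsilon}_\lambda\rangle_2\geq\langle A^\nu_\lambda(y),|y|^{2m-2}y\rangle_2=\nu\langle\Psi_\lambda(y)+\lambda y,|y|^{2m-2}y\rangle_2+\mathscr{E}(\Psi_\lambda(y)+\lambda y,|y|^{2m-2}y)\geq 0$, where the first term is nonnegative by monotonicity of $\Psi_\lambda+\lambda I$ and the $\mathscr{E}$-term is nonnegative because $\Psi_\lambda+\lambda I$ and $r\mapsto|r|^{2m-2}r$ are both nondecreasing --- this is exactly where \textbf{(H4)} (and \textbf{(H4)(ii)} in the non-local case) enters. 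Thus the drift contributes with the right (dissipative) sign, the $B$-correction is absorbed via \textbf{(H3)(ii)}, and Gronwall yields \eref{eqnarray12}. Estimate \eref{eqnarray13} is then immediate from \eref{eqnarray12}, the growth bound \textbf{(H1)} and \eref{eqnal01}. For the supremum bound \eref{eqnarray15} I apply Itô to $\|X^{\nu,\varepsilon}_\lambda\|^2_{F^*_{1,2,\nu}}=\langle X^{\nu,\varepsilon}_\lambda,(\nu-L)^{-1}X^{\nu,\varepsilon}_\lambda\rangle_2$; in this inner product the drift pairing is nonpositive by monotonicity of $A^{\nu,\varepsilon}_\lambda$, while \textbf{(H2)(ii)} and the Burkholder-Davis-Gundy inequality control the martingale part before Gronwall.

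Third, I pass to the limits. As $\varepsilon\to 0$ I show $\{X^{\nu,\varepsilon}_\lambda\}_\varepsilon$ is Cauchy in $L^2(\Omega;C([0,T];F^*_{1,2,\nu}))$: applying Itô to $\|X^{\nu,\varepsilon}_\lambda-X^{\nu,\varepsilon'}_\lambda\|^2_{F^*_{1,2,\nu}}$ and using the resolvent decomposition $x=(I+\varepsilon A^\nu_\lambda)^{-1}x+\varepsilon A^{\nu,\varepsilon}_\lambda(x)$ together with monotonicity, the non-monotone remainder is $O(\varepsilon+\varepsilon')$ once combined with the uniform control of $|A^{\nu,\varepsilon}_\lambda(X^{\nu,\varepsilon}_\lambda)|$ provided by \eref{eqnarray13}; the $B$-difference is handled by \textbf{(H2)(i)} and BDG, and Gronwall closes the estimate. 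The limit $X^\nu_\lambda$ solves \eref{eq:3333} and inherits the uniform bounds. Next I let $\nu\to 0$, where the decisive tool is Proposition \ref{theorem2}: it identifies $\lim_{\nu\to 0}\|\cdot\|_{F^*_{1,2,\nu}}=\|\cdot\|_{\mathscr{F}^*_e}$, letting me upgrade the $F^*_{1,2,\nu}$-estimates to $\mathscr{F}^*_e$-estimates and show, by an analogous monotonicity-and-Gronwall argument now in the $\mathscr{F}^*_e$-norm, that $\{X^\nu_\lambda\}_\nu$ is Cauchy in $L^2(\Omega;C([0,T];\mathscr{F}^*_e))$. The limit $X_\lambda$ solves \eref{eq:3} and satisfies \eref{eqnarray11}; moreover \eref{equ:2.2} follows from \eref{equ:2.3} because $\bar L:\mathscr{F}_e\to\mathscr{F}^*_e$ is an isometric isomorphism, so the $\mathscr{F}^*_e$-continuous path $t\mapsto X_\lambda(t)-x-\int_0^t B\,dW$ has an $\mathscr{F}_e$-continuous preimage under $\bar L$.

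Finally, for the difference estimate \eref{eqnarray14} I apply Itô to $\|X_\lambda-X_{\lambda'}\|^2_{\mathscr{F}^*_e}$; via $\bar L$ the drift pairing equals $-\langle(\Psi_\lambda(X_\lambda)+\lambda X_\lambda)-(\Psi_{\lambda'}(X_{\lambda'})+\lambda' X_{\lambda'}),X_\lambda-X_{\lambda'}\rangle_2$, and the Yosida identity $a=(1+\lambda\Psi)^{-1}a+\lambda\Psi_\lambda(a)$ together with monotonicity of $\Psi$ bounds the non-sign-definite part by $C(\lambda+\lambda')\big(|\Psi_\lambda(X_\lambda)|_2^2+|\Psi_{\lambda'}(X_{\lambda'})|_2^2+|X_{\lambda'}|_2^2\big)$; integrating in time and invoking \eref{eqnarray13} and the $L^2$-bound produces the factor $(\lambda+\lambda')$, after which \eref{eqnal0} and Gronwall finish. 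I expect the main obstacle to be the a priori $L^{2m}(\mu)$-estimate of the second step: it is the step that forces the use of the Itô formula in expectation (no mollification being available on a general $\sigma$-finite space), and verifying the correct sign of the Dirichlet-form term $\mathscr{E}(\Psi_\lambda(y)+\lambda y,|y|^{2m-2}y)$ for non-local $L$ is precisely the subtlety that condition \textbf{(H4)(ii)} is designed to handle. Propagating all constants independently of $\varepsilon,\nu,\lambda$ through both limit passages is the other point demanding care.
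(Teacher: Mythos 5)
Your overall architecture coincides with the paper's (the double approximation $\varepsilon\to0$ then $\nu\to0$, the It\^o formula in expectation of Theorem \ref{ito} for the $L^{2m}$-bound, Proposition \ref{theorem2} for passing from $F^*_{1,2,\nu}$ to $\mathscr{F}_e^*$), but two of your key steps have genuine gaps. The first concerns the drift sign in the $L^{2m}$-estimate and the role of \textbf{(H4)(ii)}. Your Yosida/Brezis reduction to $\langle A^\nu_\lambda(y),|y|^{2m-2}y\rangle_2\geq0$ is fine, but the assertion that $\mathscr{E}(\Psi_\lambda(y)+\lambda y,|y|^{2m-2}y)\geq0$ holds ``because both functions are nondecreasing, which is where \textbf{(H4)(ii)} enters'' does not stand as written: (a) $r\mapsto|r|^{2m-2}r$ is not Lipschitz, so $|y|^{2m-2}y$ need not lie in $F_{1,2}$ and the Dirichlet-form pairing is not defined --- one must use the truncations $r|r|^{2m-2}(1+k|r|^{2m-2})^{-1}$ and let $k\to0$ only after the drift term has been dropped by sign; (b) \textbf{(H4)(ii)} compares $\Gamma(\varphi(u),\varphi(u))$ with $\Gamma(u,\varphi(u))$ for a \emph{single} $\varphi$ and by itself gives no sign for $\Gamma(f(u),g(u))$ with two distinct monotone $f,g$. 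In the paper this sign is obtained from the kernel representation of the approximating forms $\mathscr{E}^{(\varepsilon)}$ via \cite[Lemma 5.1]{RW} --- precisely where standardness of $(E,\mathcal{B})$ enters --- and Lemma \ref{lemma5} in fact bypasses your reduction entirely, using only the $L^{2m}$-contraction \eref{eqnarray19} of $J_\varepsilon$ plus H\"older. (A repair of your route via \textbf{(H4)(ii)} is possible, by applying it with $u:=(\Psi_\lambda+\lambda I)(y)$ and $\varphi:=h_k\circ(\Psi_\lambda+\lambda I)^{-1}$ for the truncation $h_k$, but you do not give this.) Moreover you have inverted the actual role of \textbf{(H4)}: it is needed in Proposition \ref{Proposition7.1} to bound $\mathbb{E}\int_0^T\|(\nu-L)(\Psi_\lambda+\lambda I)(J_\varepsilon(X^{\nu,\varepsilon}_\lambda))\|^2_{F^*_{1,2,\nu}}ds$, and it is this graph-norm bound that makes your $O(\varepsilon+\varepsilon')$ remainder in the $\varepsilon\to0$ Cauchy step work; your appeal to \eref{eqnarray13} there is insufficient, since $\|(\nu-L)\varphi\|^2_{F^*_{1,2,\nu}}=\nu|\varphi|_2^2+\mathscr{E}(\varphi,\varphi)$ contains the Dirichlet form of $\varphi=(\Psi_\lambda+\lambda I)(J_\varepsilon(X^{\nu,\varepsilon}_\lambda))$, which no $L^2(\mu)$-bound controls.

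The second gap is the passage $\nu\to0$. A ``monotonicity-and-Gronwall argument in the $\mathscr{F}_e^*$-norm'' cannot be run: the perturbation terms $\nu(\Psi_\lambda(X^\nu_\lambda)+\lambda X^\nu_\lambda)$ lie only in $L^2(\mu)$, and $L^2(\mu)$ does \emph{not} embed into $\mathscr{F}_e^*$ in the transient $\sigma$-finite setting, so their pairing against $X^\nu_\lambda-X^{\nu'}_\lambda$ in $\mathscr{F}_e^*$ is not even defined, let alone controllable. The paper's substitute is to compare $X^\nu_\lambda$ and $X^{\nu'}_\lambda$ in $F^*_{1,2,\nu_0}$, exploiting $\|u\|_{F^*_{1,2,\nu_0}}\leq\nu_0^{-1/2}|u|_2$, which yields \eref{eqnarray57.2} with the factor $(\nu+\nu')/\nu_0$: this gives a Cauchy sequence in $C([0,T];F^*_{1,2})$ for each fixed $\nu_0$ but degenerates as $\nu_0\to0$, so no Cauchy property in $\mathscr{F}_e^*$ is ever proved (nor needed). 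Instead, $X_\lambda\in L^2(\Omega;C([0,T];\mathscr{F}_e^*))$ is recovered a posteriori from the equation itself ($x\in\mathscr{F}_e^*$, the drift integral lies in $C([0,T];\mathscr{F}_e^*)$ via \eref{equ:2.2} and Lemma 2.3, the stochastic integral via \textbf{(H2)(ii)}), while \eref{eqnarray15} and \eref{eqnarray14} are proved in $F^*_{1,2,\nu_0}$ with constants uniform in $\nu_0$ and then passed to the limit $\nu_0\to0$ through \eref{eqnal6} and Fatou. For the same reason, your derivations of \eref{equ:2.2} and \eref{eqnarray14} by applying It\^o directly in $\mathscr{F}_e^*$ presuppose a Gelfand triple of the form $L^2(\mu)\cap\mathscr{F}_e^*\subset\mathscr{F}_e^*\subset(L^2(\mu)\cap\mathscr{F}_e^*)^*$, which the paper constructs only in Section 5 and which your proposal does not set up; within the proof of this theorem the paper deliberately stays at the level of $X^\nu_\lambda$ in $F^*_{1,2,\nu_0}$ precisely to avoid this.
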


\section{Proof of Theorem \ref{Theorem}}\label{ProofofTheorem3.2}
\setcounter{equation}{0}
 \setcounter{definition}{0}

\begin{proof}
For each fixed $\lambda$, firstly we consider the following approximating equation for \eqref{eq:3}
\begin{equation}\label{eq:4}
\left\{ \begin{aligned}
&dX^\nu_\lambda(t)+(\nu-L)\big(\Psi_\lambda(X^\nu_\lambda(t))+\lambda X^\nu_\lambda(t)\big)dt=B(t,X^\nu_\lambda(t))dW(t),\ \text{in}\ (0,T)\times E,\\
&X^\nu_\lambda(0)=x\in  L^2(\mu)\cap L^{2m}(\mu),
\end{aligned} \right.
\end{equation}
where $\nu\in(0,1]$. Since $\Psi_\lambda+\lambda I$ is Lipschitz (where here and below $I$ denotes the identity
map on the respective space), by \cite[Lemma 3.1]{RWX}, \eqref{eq:4} has a unique $(\mathscr{F}_t)_{t\geq0}$-adapted solution in the sense that $X^\nu_\lambda\in L^2([0,T]\times \Omega;L^2(\mu))\cap L^2(\Omega;C([0,T];F^*_{1,2}))$,
\begin{eqnarray*}
X^\nu_\lambda(t)+(\nu-L)\int_0^T\Psi_\lambda(X^\nu_\lambda(s))+\lambda X^\nu_\lambda(s)ds=x+\int_0^TB(s,X^\nu_\lambda(s))dW(s),\ \text{holds\ in}\ F^*_{1,2},\ \Bbb{P}-a.s.,
\end{eqnarray*}
and there exists a positive constant $C\in(0,\infty)$ such that for all $\lambda\in(0,1)$, $\nu\in(0,1]$,
\begin{eqnarray*}
\Bbb{E}\Big[\sup_{t\in[0,T]}|X^\nu_\lambda(t)|_2^2\Big]\leq e^{CT}|x|_2^2.
\end{eqnarray*}

To prove that \eqref{eqnarray11}-\eqref{eqnarray15} hold with $X^\nu_\lambda$ replacing $X_\lambda$, with a constant $C$ independent of $\nu$ and $\lambda$, we consider the following approximating equation for \eqref{eq:4}.
\begin{equation} \label{eq:2}
\left\{ \begin{aligned}
&dX^{\nu,\varepsilon}_\lambda(t)+A^{\nu,\varepsilon}_\lambda(X^{\nu,\varepsilon}_\lambda(t))dt=B(t, X^{\nu,\varepsilon}_\lambda(t))dW(t),\ \text{in}\ (0,T)\times E,\\
&X^{\nu,\varepsilon}_\lambda(0)=x\in L^2(\mu)\cap L^{2m}(\mu),
\end{aligned} \right.
\end{equation}
where $A^{\nu,\varepsilon}_\lambda:F^*_{1,2}\rightarrow F^*_{1,2}$, defined by
\begin{eqnarray*}
A^{\nu,\varepsilon}_\lambda(x)=\frac{1}{\varepsilon}\big(x-(I+\varepsilon
A^\nu_\lambda)^{-1}(x)\big),~ x\in F^*_{1,2},~ \varepsilon\in (0,1),
\end{eqnarray*}
is the Yosida approximation of the operator $A^\nu_\lambda
(x):=(\nu-L)(\Psi_\lambda(x)+\lambda I(x))$ on $F^*_{1,2}$, $x\in
D(A^\nu_\lambda):=F_{1,2}$. Clearly, $I+\varepsilon A^\nu_\lambda:F_{1,2}\rightarrow F^*_{1,2}$ is a bijection, since so is $\Psi_\lambda+\lambda I:F_{1,2}\rightarrow F_{1,2}$. Furthermore, since by \eqref{eqnarray31} below, $A^\nu_{\lambda}$ with domain $F_{1,2}$ is monotone on $F^*_{1,2,\nu}$, it follows that $A^\nu_\lambda$ is maximal monotone on $F^*_{1,2,\nu}$. Fix $x\in F^*_{1,2}$ and set
$y:=J_\varepsilon(x):=(I+\varepsilon A^\nu_\lambda)^{-1}x\in
F_{1,2}$, i.e., $(I+\varepsilon A^\nu_\lambda)(y)=x$, equivalently,
\begin{eqnarray}\label{eqnarray18}
   &&y+\varepsilon(\nu-L)(\Psi_\lambda+\lambda I)(y)=x.
\end{eqnarray}
In particular, $(\Psi_\lambda+\lambda I)(y)\in D(L)$, if $x\in L^2(\mu)$.


\vspace{1mm}
\medskip
Before giving the well-posedness result for \eqref{eq:2}, we need some preparations.
\begin{lemma}\label{lemma1}
Set
$$J_\varepsilon(x):=(I+\varepsilon A^\nu_\lambda)^{-1}x,\ \forall x\in F^*_{1,2}.$$
For all $0<\varepsilon<1$, we have
\begin{eqnarray}
&&\|J_\varepsilon (x)-J_\varepsilon
(\widetilde{x})\|_{F^*_{1,2,\nu}}\leq \|x-\widetilde{x}\|_{F^*_{1,2,\nu}},\forall x,\widetilde{x}\in F^*_{1,2}.\label{eqnarray41}\\
&&|J_\varepsilon (x)-J_\varepsilon (\widetilde{x})|_2\leq\frac{1}{\sqrt{\nu\varepsilon\lambda}}|x-\widetilde{x}|_2,~\forall x,\widetilde{x}\in L^2(\mu). \label{eqnarray20}\\
&&|J_\varepsilon (x)|_p\leq |x|_p,\ \ \forall x\in L^p(\mu)\cap
L^2(\mu),~ 2\leq p<\infty. \label{eqnarray19}
\end{eqnarray}
\end{lemma}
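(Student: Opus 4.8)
The plan is to work throughout with the defining identity \eref{eqnarray18} for $y:=J_\varepsilon(x)$, namely $y+\varepsilon(\nu-L)\phi(y)=x$, where I abbreviate $\phi:=\Psi_\lambda+\lambda I$ for the single-valued, Lipschitz, \emph{strictly} increasing inner nonlinearity (strict because of the $+\lambda I$ term), and likewise $\widetilde y:=J_\varepsilon(\widetilde x)$. For \eref{eqnarray41} I would subtract the two identities and pair with $y-\widetilde y$ in the inner product of the Hilbert space $(F^*_{1,2},\|\cdot\|_{F^*_{1,2,\nu}})$. The one computation to record is that, using the self-adjointness of $(\nu-L)^{\pm1}$ and \eref{eqnarray48},
$$\big\langle A^\nu_\lambda(y)-A^\nu_\lambda(\widetilde y),\,y-\widetilde y\big\rangle_{F^*_{1,2,\nu}}=\big\langle \phi(y)-\phi(\widetilde y),\,y-\widetilde y\big\rangle_2\ge \lambda\,|y-\widetilde y|_2^2\ge 0,$$
the last two inequalities coming from the monotonicity of $\Psi_\lambda$ plus the strong monotonicity contributed by $\lambda I$. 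This is exactly the monotonicity of $A^\nu_\lambda$ on $F^*_{1,2,\nu}$ recorded just before the lemma, so \eref{eqnarray41} follows from the standard nonexpansiveness of the resolvent of a maximal monotone operator after one Cauchy--Schwarz step.

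For \eref{eqnarray20} I would reuse the same paired identity for $x,\widetilde x\in L^2(\mu)$, now \emph{retaining} the strong-monotonicity gain:
$$\|y-\widetilde y\|^2_{F^*_{1,2,\nu}}+\varepsilon\lambda\,|y-\widetilde y|_2^2\le \big\langle x-\widetilde x,\,y-\widetilde y\big\rangle_{F^*_{1,2,\nu}}.$$
The point is to not bound the right-hand side by Cauchy--Schwarz in $F^*_{1,2,\nu}$ (which loses the correct power of $\nu$), but to factor $(\nu-L)^{-1}=(\nu-L)^{-1/2}(\nu-L)^{-1/2}$ and use $\|(\nu-L)^{-1/2}\|_{L^2\to L^2}\le\nu^{-1/2}$ to get $\langle x-\widetilde x,(\nu-L)^{-1}(y-\widetilde y)\rangle_2\le \nu^{-1/2}|x-\widetilde x|_2\,\|y-\widetilde y\|_{F^*_{1,2,\nu}}$. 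Young's inequality then absorbs $\|y-\widetilde y\|^2_{F^*_{1,2,\nu}}$ into the left side and leaves $\varepsilon\lambda\,|y-\widetilde y|_2^2\le \tfrac1{2\nu}|x-\widetilde x|_2^2$, which is \eref{eqnarray20} (in fact with room to spare).

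The interesting estimate is the $L^p$-contraction \eref{eqnarray19}, where the main obstacle lies. Assuming $x\in L^p(\mu)\cap L^2(\mu)$ (so $\phi(y)\in D(L)$ and $y\in F_{1,2}$), I would test \eref{eqnarray18} against $\psi_n(y)$, with $\psi_n$ a bounded, non-decreasing, Lipschitz approximation of $\psi(r):=|r|^{p-2}r$ satisfying $\psi_n(0)=0$ (truncating the unbounded slope of $\psi$ at infinity). This gives
$$\big\langle y,\psi_n(y)\big\rangle_2+\varepsilon\nu\big\langle \phi(y),\psi_n(y)\big\rangle_2+\varepsilon\,\mathscr{E}\big(\phi(y),\psi_n(y)\big)=\big\langle x,\psi_n(y)\big\rangle_2.$$
The zero-order term is harmless: $\langle\phi(y),\psi_n(y)\rangle_2\ge0$ pointwise since $\phi$ and $\psi_n$ are non-decreasing and vanish at $0$. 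The crux is the sign of $\mathscr{E}(\phi(y),\psi_n(y))$, which for a \emph{non-local} form has no chain rule to fall back on. My idea is to reduce the two nonlinearities to one: since $\phi$ is strictly increasing with $\phi(0)=0$ and $(\phi^{-1})'\le 1/\lambda$, its inverse $\phi^{-1}$ is Lipschitz through the origin, so writing $w:=\phi(y)\in F_{1,2}$ and $\widetilde\chi_n:=\psi_n\circ\phi^{-1}$ (Lipschitz, non-decreasing, $\widetilde\chi_n(0)=0$) one has $\psi_n(y)=\widetilde\chi_n(w)$, whence by \textbf{(H4)(i)} and polarization $\mathscr{E}(\phi(y),\psi_n(y))=\tfrac12\int\Gamma(w,\widetilde\chi_n(w))\,d\mu$. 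Now \textbf{(H4)(ii)} applies to the single function $\widetilde\chi_n$, yielding $0\le\Gamma(\widetilde\chi_n(w),\widetilde\chi_n(w))\le C_5\,\Gamma(w,\widetilde\chi_n(w))$, hence $\Gamma(w,\widetilde\chi_n(w))\ge0$ and the Dirichlet term is nonnegative. Dropping it leaves $\langle y,\psi_n(y)\rangle_2\le\langle x,\psi_n(y)\rangle_2$; letting $n\to\infty$ gives $|y|_p^p\le\langle x,|y|^{p-2}y\rangle_2\le|x|_p\,|y|_p^{p-1}$ by H\"older, i.e.\ \eref{eqnarray19}.

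The decisive and least routine step is thus the positivity of $\mathscr{E}(\phi(y),\psi_n(y))$ for two \emph{distinct} monotone nonlinearities of the same argument; the reduction $w=\phi(y)$, $\widetilde\chi_n=\psi_n\circ\phi^{-1}$ is exactly what makes \textbf{(H4)(ii)} applicable, and it is precisely here that the $+\lambda I$ regularization (guaranteeing invertibility of $\phi$ with Lipschitz inverse) is essential. For a local $(\mathscr{E},D(\mathscr{E}))$ this step is immediate from the chain rule for $\Gamma$ (cf.\ Remark (ii) after \textbf{(H4)}); the whole role of \textbf{(H4)(ii)} is to salvage it for non-local forms. Two technical points I would check in passing are that the normal-contraction property of $(\mathscr{E},D(\mathscr{E}))$ keeps $\phi(y)$, $\psi_n(y)$ and $\widetilde\chi_n(w)$ in $F_{1,2}$, and that \eref{eqnarray41}, although stated on all of $F^*_{1,2}$, needs only the maximal monotonicity of $A^\nu_\lambda$ and so requires no density argument.
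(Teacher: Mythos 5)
Your proof is correct; \eref{eqnarray41} is argued exactly as in the paper, but your routes to \eref{eqnarray20} and, more importantly, to \eref{eqnarray19} are genuinely different. For \eref{eqnarray20} the paper tests the difference of the identities \eref{eqnarray18} against $(\Psi_\lambda+\lambda I)(y)-(\Psi_\lambda+\lambda I)(\widetilde y)$, keeps the coercive term $\nu\varepsilon|(\Psi_\lambda+\lambda I)(y)-(\Psi_\lambda+\lambda I)(\widetilde y)|_2^2$ coming from the $F_{1,2}$-norm and cancels it by Young's inequality, whereas you keep the $F^*_{1,2,\nu}$-pairing with $y-\widetilde y$, retain the gain $\varepsilon\lambda|y-\widetilde y|_2^2$ from the $\lambda I$-part, and estimate the right-hand side via $\|(\nu-L)^{-1/2}\|_{L^2\to L^2}\le\nu^{-1/2}$; both are valid, and yours even yields the slightly better constant $(2\nu\varepsilon\lambda)^{-1/2}$. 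The substantive divergence is in \eref{eqnarray19}: the paper derives the decisive positivity \eref{eqnarray22}, i.e. $\mathscr{E}(f(y),g(y))\ge0$ for \emph{any} pair of non-decreasing Lipschitz $f,g$ vanishing at $0$, from the kernel representation of the approximating forms $\mathscr{E}^{(\varepsilon)}$ provided by \cite[Lemma 5.1]{RW} --- this is precisely where the standard-measurable-space assumption enters the proof of Lemma \ref{lemma1} --- while you obtain it from \textbf{(H4)}: polarizing \textbf{(H4)(i)} and applying \textbf{(H4)(ii)} to the single function $\widetilde\chi_n=\psi_n\circ\phi^{-1}$, which is legitimate because $\phi=\Psi_\lambda+\lambda I$ is a bijection of $\mathbb{R}$ with $\tfrac{1}{\lambda}$-Lipschitz inverse and $\phi(0)=0$, so that positivity of $\Gamma$ gives $0\le\Gamma(\widetilde\chi_n(w),\widetilde\chi_n(w))\le C_5\,\Gamma(w,\widetilde\chi_n(w))$ $\mu$-a.e.\ and hence $\mathscr{E}(\phi(y),\psi_n(y))=\tfrac12\int\Gamma(w,\widetilde\chi_n(w))\,d\mu\ge0$. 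This is the mechanism the paper deploys only later, in Proposition \ref{Proposition7.1}; transplanted here it frees this lemma from the kernel machinery (and from the standard-space assumption), but it costs the extra hypothesis \textbf{(H4)(ii)}, which the paper's proof of Lemma \ref{lemma1} never invokes, and it relies essentially on the invertibility supplied by $+\lambda I$, so, unlike the paper's argument, it could not handle two unrelated monotone nonlinearities as in \eref{eqnarray22}. One loose end you should tighten (the paper's own $k\to0$ limit has the same informality): you cannot pass directly to $\langle x,|y|^{p-2}y\rangle_2\le|x|_p|y|_p^{p-1}$, since finiteness of $|y|_p$ is exactly what is being proved; instead combine H\"{o}lder with the finiteness of $a_n:=\langle y,\psi_n(y)\rangle_2$ to get $a_n\le|x|_p\,a_n^{(p-1)/p}$, hence the uniform bound $a_n\le|x|_p^p$, and conclude by monotone convergence.
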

\begin{proof}
Firstly, let us prove \eqref{eqnarray41}. For $x,\ \tilde{x}\in
F^*_{1,2}$, set $y:=J_\varepsilon(x)$ and
$\widetilde{y}:=J_\varepsilon(\widetilde{x})$, we have
\begin{eqnarray*}\label{eqna2}
   &&y-\tilde{y}+\varepsilon A^\nu_\lambda(y)-\varepsilon
A^\nu_\lambda(\tilde{y})=x-\tilde{x}.
\end{eqnarray*}
Taking the scalar product of $y-\tilde{y}$ with both sides in $(F^*_{1,2},
\|\cdot\|_{F^*_{1,2,\nu}})$, we get
\begin{eqnarray}\label{eqnarray30}
   &&\langle y-\tilde{y},y-\tilde{y}\rangle_{F^*_{1,2,\nu}}+\varepsilon \langle A^\nu_\lambda(y)-
A^\nu_\lambda(\tilde{y}), y-\tilde{y}\rangle_{F^*_{1,2,\nu}}=\langle
x-\tilde{x},y-\tilde{y}\rangle_{F^*_{1,2,\nu}}.
\end{eqnarray}
For the second term in the left hand-side of \eqref{eqnarray30}, by \eqref{equation3}, we know
\begin{eqnarray}\label{eqnarray31}
&&\!\!\!\!\!\!\!\!\langle A^\nu_\lambda(y)-A^\nu_\lambda(\widetilde{y}),y-\widetilde{y}\rangle_{F^*_{1,2,\nu}}\nonumber\\
=&&\!\!\!\!\!\!\!\!\big\langle(\nu-L)((\Psi_\lambda+\lambda I)(y)-(\Psi_\lambda+\lambda I)(\widetilde{y})),y-\widetilde{y}\big\rangle_{F^*_{1,2,\nu}}\nonumber\\
=&&\!\!\!\!\!\!\!\!_{F_{1,2}}\big\langle(\Psi_\lambda+\lambda I)(y)-(\Psi_\lambda+\lambda I)(\widetilde{y}),y-\widetilde{y}\big\rangle_{F^*_{1,2}}\nonumber\\
=&&\!\!\!\!\!\!\!\!\big\langle(\Psi_\lambda+\lambda I)(y)-(\Psi_\lambda+\lambda I)(\widetilde{y}),y-\widetilde{y}\big\rangle_2\geq0,
\end{eqnarray}
since $y-\tilde{y}\in F_{1,2}\subset L^2(\mu)$. \\
\eqref{eqnarray30} and \eqref{eqnarray31} imply
\begin{eqnarray*}\label{eqna1}
  \|y-\tilde{y}\|^2_{F^*_{1,2,\nu}}\leq
\|x-\tilde{x}\|_{F^*_{1,2,\nu}}\cdot\|y-\tilde{y}\|_{F^*_{1,2,\nu}},
\end{eqnarray*}
from which \eqref{eqnarray41} follows.

Secondly, to prove the Lipschitz continuity of $J_\varepsilon$ in $L^2(\mu)$, we take $x, \tilde{x}\in L^2(\mu)$ and apply
$$_{F^*_{1,2}}\big\langle~ \cdot,~
(\Psi_\lambda+\lambda I)(y)-(\Psi_\lambda+\lambda I)(\tilde{y})\big\rangle_{F_{1,2}}$$
to both sides of \eqref{eqna2}. Then
\begin{eqnarray}\label{eqna4}
&&\!\!\!\!\!\!\!\!_{F^*_{1,2}}\big\langle y-\tilde{y},
(\Psi_\lambda+\lambda I)(y)-(\Psi_\lambda+\lambda I)(\tilde{y})\big\rangle_{F_{1,2}}\nonumber\\
&&\!\!\!\!\!\!\!\!+ _{F^*_{1,2}}\big\langle
\varepsilon A^\nu_\lambda(y)-\varepsilon A^\nu_\lambda(\tilde{y}),(\Psi_\lambda+\lambda I)(y)-(\Psi_\lambda+\lambda I)(\tilde{y})\big\rangle_{F_{1,2}}\nonumber\\
=&&\!\!\!\!\!\!\!\!_{F^*_{1,2}}\big\langle x-\tilde{x},
(\Psi_\lambda+\lambda I)(y)-(\Psi_\lambda+\lambda I)(\tilde{y})\big\rangle_{F_{1,2}}.
\end{eqnarray}
For the second term in the left hand-side of \eqref{eqna4}, by \eqref{eqnarray48}-\eqref{equation3} (under the Gelfand triple $F_{1,2}\subset L^2(\mu)\subset F^*_{1,2}$), we obtain
\begin{eqnarray}\label{eqna3}
&&\!\!\!\!\!\!\!\!_{F^*_{1,2}}\big\langle \varepsilon A^\nu_\lambda(y)-\varepsilon A^\nu_\lambda(\widetilde{y}),(\Psi_\lambda+\lambda I)(y)-(\Psi_\lambda+\lambda I)(\widetilde{y})\big\rangle_{F_{1,2}}\nonumber\\
=&&\!\!\!\!\!\!\!\!_{F^*_{1,2}}\big\langle(1-L)(\varepsilon(\Psi_\lambda+\lambda I)(y)-\varepsilon(\Psi_\lambda+\lambda I)(\widetilde{y})),(\Psi_\lambda+\lambda I)(y)-(\Psi_\lambda+\lambda I)(\widetilde{y})\big\rangle_{F_{1,2}}\nonumber\\
&&\!\!\!\!\!\!\!\!+_{F^*_{1,2}}\big\langle\varepsilon(\nu-1)((\Psi_\lambda+\lambda I)(y)-(\Psi_\lambda+\lambda)(\widetilde{y})),(\Psi_\lambda+\lambda I)(y)-(\Psi_\lambda+\lambda I)(\widetilde{y})\big\rangle_{F_{1,2}}\nonumber\\
=&&\!\!\!\!\!\!\!\!\varepsilon\|(\Psi_\lambda+\lambda I)(y)-(\Psi_\lambda+\lambda I)(\widetilde{y})\|_{F_{1,2}}^2+\varepsilon(\nu-1)|(\Psi_\lambda+\lambda I)(y)-(\Psi_\lambda+\lambda I)(\widetilde{y})|_2^2\nonumber\\
\geq&&\!\!\!\!\!\!\!\!\nu\varepsilon|(\Psi_\lambda+\lambda I)(y)-(\Psi_\lambda+\lambda I)(\widetilde{y})|^2_2.
\end{eqnarray}
For the first term in the left hand-side of \eqref{eqna4}, since $\Psi_\lambda$ is
monotone, by \eqref{eqnarray48} (under the Gelfand triple $F_{1,2}\subset L^2(\mu)\subset F^*_{1,2}$), we know
\begin{eqnarray}\label{eqnarray64}
&&\!\!\!\!\!\!\!\!_{F^*_{1,2}}\big\langle y-\tilde{y},
(\Psi_\lambda+\lambda I)(y)-(\Psi_\lambda+\lambda I)(\tilde{y})\big\rangle_{F_{1,2}}\nonumber\\
&&\!\!\!\!\!\!\!\!=\big\langle y-\tilde{y},
(\Psi_\lambda+\lambda I)(y)-(\Psi_\lambda+\lambda I)(\tilde{y})\big\rangle_2\nonumber\\
&&\!\!\!\!\!\!\!\!\geq\lambda|y-\tilde{y}|^2_2.
\end{eqnarray}
Similarly, since $x,\widetilde{x}\in L^2(\mu)$, by \eqref{eqnarray48}, we have
\begin{eqnarray}\label{eqnarray64.1}
&&\!\!\!\!\!\!\!\!_{F^*_{1,2}}\langle x-\widetilde{x},(\Psi_\lambda+\lambda I)(y)-(\Psi_\lambda+\lambda I)(\widetilde{y})\rangle_{F_{1,2}}\nonumber\\
=&&\!\!\!\!\!\!\!\!\langle x-\widetilde{x},(\Psi_\lambda+\lambda I)(y)-(\Psi_\lambda+\lambda I)(\widetilde{y})\rangle_2.
\end{eqnarray}
Taking \eqref{eqna3}, \eqref{eqnarray64} and \eqref{eqnarray64.1} into \eqref{eqna4}, by Young's inequality, we obtain
\begin{eqnarray*}\label{eqna13}
&&\!\!\!\!\!\!\!\!\lambda|y-\widetilde{y}|_2^2+\nu\varepsilon\big|(\Psi_\lambda+\lambda I)(y)-(\Psi_\lambda+\lambda I)(\widetilde{y})\big|^2_2\nonumber\\
\leq&&\!\!\!\!\!\!\!\!|x-\widetilde{x}|_2\cdot\big|(\Psi_\lambda+\lambda I)(y)-(\Psi_\lambda+\lambda I)(\widetilde{y})\big|_2\nonumber\\
\leq&&\!\!\!\!\!\!\!\!\frac{1}{\nu\varepsilon}|x-\widetilde{x}|_2^2+\nu\varepsilon\big|(\Psi_\lambda+\lambda I)(y)-(\Psi_\lambda+\lambda I)(\widetilde{y})\big|^2_2,
\end{eqnarray*}
and therefore
\begin{eqnarray*}
&&|y-\widetilde{y}|_2^2\leq
\frac{1}{\nu\varepsilon\lambda}|x-\widetilde{x}|_2^2,
\end{eqnarray*}
which yields \eqref{eqnarray20} as claimed.

Now, let us prove \eqref{eqnarray19}. Let $x\in L^2(\mu)\cap L^p(\mu)$, $p\geq2$. Since the function $h(r):=r|r|^{p-2}(1+k|r|^{p-2})^{-1}$ is Lipschitz, and $h(0)=0$, we have $h(y)\in F_{1,2}$, because $y\in F_{1,2}$. Hence applying
$_{F^*_{1,2}}\big\langle~\cdot,~
y|y|^{p-2}(1+k|y|^{p-2})^{-1}\big\rangle_{F_{1,2}}, ~k>0$, to both sides of
\eqref{eqnarray18}, we obtain
\begin{eqnarray}\label{eqnarray21}
   &&\!\!\!\!\!\!\!\!_{F^*_{1,2}}\big\langle y, \frac{y|y|^{p-2}}{1+k|y|^{p-2}}\big\rangle_{F_{1,2}}+_{F^*_{1,2}}\big\langle\varepsilon(\nu-L)(\Psi_\lambda(y)+\lambda
y),\frac{y|y|^{p-2}}{1+k|y|^{p-2}}\big\rangle_{F_{1,2}}\nonumber\\
=&&\!\!\!\!\!\!\!\!_{F^*_{1,2}}\big\langle x,
\frac{y|y|^{p-2}}{1+k|y|^{p-2}}\big\rangle_{F_{1,2}}.
\end{eqnarray}
Under the Gelfand triple $F_{1,2}\subset L^2(\mu)\subset F^*_{1,2}$, by \eqref{eqnarray48}, \eqref{eqnarray21} yields
\begin{eqnarray}\label{eqnarray47}
   &&\!\!\!\!\!\!\!\!\big\langle y, \frac{y|y|^{p-2}}{1+k|y|^{p-2}}\big\rangle_2+_{F^*_{1,2}}\big\langle\varepsilon(\nu-L)(\Psi_\lambda(y)+\lambda
y),\frac{y|y|^{p-2}}{1+k|y|^{p-2}}\big\rangle_{F_{1,2}}\nonumber\\
=&&\!\!\!\!\!\!\!\!\big\langle x,
\frac{y|y|^{p-2}}{1+k|y|^{p-2}}\big\rangle_2.
\end{eqnarray}
For the second term in the left hand-side of \eqref{eqnarray47}, since $x\in L^2(\mu)$, $y\in F_{1,2}\subset L^2(\mu)$, from \eqref{eqnarray18} we deduce that
\begin{eqnarray*}
(\nu-L)(\Psi_\lambda(y)+\lambda y)\in L^2(\mu).
\end{eqnarray*}
Then by \eqref{eqnarray48}, we know
\begin{eqnarray*}
_{F^*_{1,2}}\big\langle\varepsilon(\nu-L)(\Psi_\lambda(y)+\lambda
y),\frac{y|y|^{p-2}}{1+k|y|^{p-2}}\big\rangle_{F_{1,2}}=\big\langle\varepsilon(\nu-L)(\Psi_\lambda(y)+\lambda
y),\frac{y|y|^{p-2}}{1+k|y|^{p-2}}\big\rangle_2.
\end{eqnarray*}
To estimate the term above, notice that for all Lipschitz and increasing function $g:\mathbb{R}\rightarrow
\mathbb{R}$ with $g(0)=0$, we have
\begin{eqnarray*}\label{eqnarray22}
   && \int_E(\nu-L)\big(\Psi_\lambda(y)+\lambda
y\big)\cdot g(y)d\mu\geq 0,
\end{eqnarray*}
because on one hand, $\Psi_{\lambda}$ is Lipschitz and monotone with $\Psi_\lambda(0)=0$, then obviously,
\begin{eqnarray*}\label{eqnarray23}
  &&\int_E\nu\big(\Psi_\lambda(y)+\lambda
y\big)\cdot g(y)d\mu\geq 0.
\end{eqnarray*}
On the other hand, we can prove the following term, i.e.,
\begin{eqnarray}\label{eqnarray24}
   &&\!\!\!\!\!\!\!\! \big\langle(-L)(\Psi_\lambda(y)+\lambda y), g(y)\big\rangle\nonumber\\
=&&\!\!\!\!\!\!\!\!\mathscr{E}\big(\Psi_\lambda(y)+\lambda y, g(y)\big)\nonumber\\
=&&\!\!\!\!\!\!\!\!\lim_{\varepsilon\rightarrow0}\mathscr{E}^{(\varepsilon)}\big(\Psi_\lambda(y)+\lambda y,
g(y)\big),
\end{eqnarray}
is non-negative. Indeed, by \cite[Lemma 5.1]{RW}, with $\kappa$ being the kernel corresponding to
$P:=(I-\varepsilon L)^{-1}$, we know, setting $f:=\Psi_\lambda+\lambda I$,
\begin{eqnarray*}\label{eqnarray25}
   \mathscr{E}^{(\varepsilon)}\big(f(y), g(y)\big):=&&\!\!\!\!\!\!\!\! \frac{1}{\varepsilon} \big\langle f(y), \big(I-(I-\varepsilon L)^{-1}\big)g(y)\big\rangle_2\nonumber\\
   =&&\!\!\!\!\!\!\!\! \frac{1}{2\varepsilon}\int_E\int_E\big((f(y(\widetilde{\xi})))-f(y(\xi))\big)\cdot\big(g(y(\tilde{\xi}))-g(y(\xi))\big)\kappa(\xi, d\tilde{\xi})\mu (d\xi)\nonumber \\
   &&\!\!\!\!\!\!\!\!+\frac{1}{\varepsilon}\int_E(1-P1(\xi))f(y(\xi))g(y(\xi))\mu(d\xi),
\end{eqnarray*}
since $f, g$ are monotone with $f(0)=g(0)=0$ and $P1\leq1$, we deduce that
\begin{eqnarray*}\label{eqnarray26}
  &&\mathscr{E}^{(\varepsilon)}\big(f(y), g(y)\big)\geq 0,
\end{eqnarray*}
which implies that \eqref{eqnarray24} is non-negative. As a short remark, the assumption that $(E,\mathcal{B})$ is a standard measurable space is needed in \cite[Lemma 5.1]{RW} to ensure the existence of the kernel $\kappa$ above.

Thus,
\begin{eqnarray*}\label{eqnarray27}
  &&\int_E \frac{|y|^{p}}{1+k|y|^{p-2}}d\mu\leq \int_E  \frac{xy|y|^{p-2}}{1+k|y|^{p-2}}d\mu.
\end{eqnarray*}
Letting $k\rightarrow 0$ and by H\"{o}lder's inequality, we obtain
\begin{eqnarray*}\label{eqnarray28}
  &&\!\!\!\!\!\!\!\!|y|_p^p\leq \int_E xy|y|^{p-2}d\mu \leq
|x|_p|y|^{p-1}_p.
\end{eqnarray*}
Hence, since $y= J_\varepsilon(x)$,
\begin{eqnarray*}
  &&|J_\varepsilon (x)|_p\leq |x|_p.
\end{eqnarray*}

\end{proof}

\vspace{2mm}

As shown in Lemma \ref{lemma1}, $J_\varepsilon$ is Lipschitz in both $L^2(\mu)$ and $F^*_{1,2}$. Since $A^{\nu,\varepsilon}_\lambda=\frac{1}{\varepsilon}(I-J_\varepsilon)$, $A^{\nu,\varepsilon}_\lambda$ is also Lipschitz in $L^2(\mu)$ and $F^*_{1,2}$. If $x\in F^*_{1,2}$, \eqref{eq:2} has a unique
adapted solution $X^{\nu,\varepsilon}_\lambda\in
L^2(\Omega;C([0,T];F^*_{1,2}))$ and by It\^{o}'s formula (see e.g. \cite[Theorem 4.2.5]{LR}) we have
\begin{eqnarray*}
  &&\!\!\!\!\!\!\!\!\mathbb{E}\|X^{\nu,\varepsilon}_\lambda(t)\|^2_{F^*_{1,2,\nu}}+2\mathbb{E}\int_0^t\big\langle A^{\nu,\varepsilon}_\lambda(X^{\nu,\varepsilon}_\lambda(s)),X^{\nu,\varepsilon}_\lambda(s)\big\rangle_{F^*_{1,2,\nu}}ds\nonumber\\
  =&&\!\!\!\!\!\!\!\!\|x\|^2_{F^*_{1,2,\nu}}+\mathbb{E}\int_0^t\|B(s,X^{\nu,\varepsilon}_\lambda(s))\|^2_{L_2(L^2(\mu),F^*_{1,2,\nu})}ds,~~t\in[0,T],
\end{eqnarray*}
which, by virtue of \textbf{(H2)(ii)} and the fact that the second term on the left hand-side is nonnegative by \eqref{eqnarray41}, yields
\begin{eqnarray*}
  \mathbb{E}\|X^{\nu,\varepsilon}_\lambda(t)\|^2_{F^*_{1,2,\nu}}\leq
e^{C_2T}\|x\|^2_{F^*_{1,2,\nu}},~ \forall \varepsilon>0,~t\in[0,T],~
x\in F^*_{1,2}.
\end{eqnarray*}
Similarly, if $x\in L^2(\mu)$, we know that $X^{\nu,\varepsilon}_\lambda\in
L^2(\Omega;C([0,T];L^2(\mu)))$ and again by It\^{o}'s formula we obtain
\begin{eqnarray*}
  &&\!\!\!\!\!\!\!\!\mathbb{E}|X^{\nu,\varepsilon}_\lambda(t)|^2_2+2\mathbb{E}\int_0^t\big\langle A^{\nu,\varepsilon}_\lambda(X^{\nu,\varepsilon}_\lambda(s)),X^{\nu,\varepsilon}_\lambda(s)\big\rangle_2ds\nonumber\\
  =&&\!\!\!\!\!\!\!\!|x|^2_2+\mathbb{E}\int_0^t\|B(s,X^{\nu,\varepsilon}_\lambda(s))\|^2_{L_2(L^2(\mu),L^2(\mu))}ds,
\end{eqnarray*}
which, by virtue of \textbf{(H3)(i)} and the fact that the second summand on the left hand-side is nonnegative by \eqref{eqnarray19} applied to $p=2$, yields
\begin{eqnarray}\label{eqna7}
  \mathbb{E}|X^{\nu,\varepsilon}_\lambda(t)|_2^2 \leq
e^{C_3T}|x|_2^2,~ \forall \varepsilon>0,~t\in[0,T],~
x\in L^2(\mu).
\end{eqnarray}

%

\begin{lemma}\label{lemma4}
For  $x\in L^2(\mu)\cap L^{2m}(\mu)$, we have that
$X^{\nu,\varepsilon}_\lambda\in
L^\infty\big([0,T];L^{2m}(\Omega;L^{2m}(\mu))\big)$.
\end{lemma}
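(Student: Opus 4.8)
The plan is to run an It\^{o}-type energy estimate for the $L^{2m}(\mu)$-norm. First I would apply the $L^{p}(\mu)$-It\^{o} formula in expectation (Theorem \ref{ito}, Subsection 7.2) with $p=2m$ to the functional $u\mapsto|u|_{2m}^{2m}=\int_E|u|^{2m}d\mu$ along the solution $X^{\nu,\varepsilon}_\lambda$ of \eref{eq:2}. Since the first two derivatives of this functional are $2m\,|u|^{2m-2}u$ and $2m(2m-1)|u|^{2m-2}$, the formula gives, for each $t\in[0,T]$,
\begin{eqnarray*}
\mathbb{E}|X^{\nu,\varepsilon}_\lambda(t)|_{2m}^{2m}
&=&|x|_{2m}^{2m}
-2m\,\mathbb{E}\int_0^t\!\!\int_E A^{\nu,\varepsilon}_\lambda(X^{\nu,\varepsilon}_\lambda(s))\,X^{\nu,\varepsilon}_\lambda(s)|X^{\nu,\varepsilon}_\lambda(s)|^{2m-2}\,d\mu\,ds\\
&&+\,m(2m-1)\,\mathbb{E}\int_0^t\!\!\int_E|X^{\nu,\varepsilon}_\lambda(s)|^{2m-2}\sum_{k\geq1}|B(s,X^{\nu,\varepsilon}_\lambda(s))e_k|^2\,d\mu\,ds.
\end{eqnarray*}
Since $A^{\nu,\varepsilon}_\lambda$ maps $L^2(\mu)\cap L^{2m}(\mu)$ into itself with at most linear growth (by \eref{eqnarray19}, $|A^{\nu,\varepsilon}_\lambda(x)|_{2m}\leq\frac2\varepsilon|x|_{2m}$) and $B$ obeys \textbf{(H3)(ii)}, the integrability hypotheses of Theorem \ref{ito} are met after a standard approximation/localisation of the $L^{2m}$-norm, the constraint being removed at the end by Fatou together with the Gronwall bound below; the already established regularity $X^{\nu,\varepsilon}_\lambda\in L^2(\Omega;C([0,T];L^2(\mu)))$ is used here.

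The decisive step is to show that the drift term is non-negative and may therefore be discarded, i.e. that $\int_E A^{\nu,\varepsilon}_\lambda(u)\,g(u)\,d\mu\geq0$ for $u\in L^2(\mu)\cap L^{2m}(\mu)$, where $g(r):=r|r|^{2m-2}$. Setting $v:=J_\varepsilon(u)$, so that $A^{\nu,\varepsilon}_\lambda(u)=\frac1\varepsilon(u-v)=A^\nu_\lambda(v)=(\nu-L)(\Psi_\lambda(v)+\lambda v)$ by \eref{eqnarray18}, I would split $g(u)=(g(u)-g(v))+g(v)$. The first piece $\frac1\varepsilon\int_E(u-v)(g(u)-g(v))\,d\mu$ is non-negative pointwise because $g$ is increasing, while the second equals $\int_E(\nu-L)(\Psi_\lambda(v)+\lambda v)\,g(v)\,d\mu$. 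I would split the latter once more into $\nu\int_E(\Psi_\lambda(v)+\lambda v)\,g(v)\,d\mu\geq0$ (both factors are odd, increasing, and vanish at $0$) and the Dirichlet-form term $\mathscr{E}(\Psi_\lambda(v)+\lambda v,\,g(v))$; as in \eref{eqnarray24}, the latter is non-negative by \cite[Lemma 5.1]{RW}, since $f:=\Psi_\lambda+\lambda I$ and $g$ are increasing with $f(0)=g(0)=0$ and $P1\leq1$ for $P=(I-\delta L)^{-1}$.

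The hard part is precisely this Dirichlet-form step: because $g(r)=r|r|^{2m-2}$ is not Lipschitz, $g(v)$ need not belong to $F_{1,2}$, so $\mathscr{E}(\Psi_\lambda(v)+\lambda v,g(v))$ cannot be formed directly. As in the proof of Lemma \ref{lemma1}, I would circumvent this by the truncation $g_k(r):=r|r|^{2m-2}(1+k|r|^{2m-2})^{-1}$, which is Lipschitz, increasing and vanishes at $0$, so that $g_k(v)\in F_{1,2}$ and \cite[Lemma 5.1]{RW} applies to $f$ and $g_k$, giving $\int_E A^\nu_\lambda(v)\,g_k(v)\,d\mu\geq0$. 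Since $A^\nu_\lambda(v)=\frac1\varepsilon(u-v)\in L^{2m}(\mu)$ while $|g_k(v)|\leq|g(v)|\in L^{2m/(2m-1)}(\mu)$, dominated convergence lets me send $k\to0$ and recover $\int_E A^\nu_\lambda(v)\,g(v)\,d\mu\geq0$, closing the positivity argument.

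Finally, for the noise term I would estimate $\int_E|X|^{2m-2}\sum_k|B(s,X)e_k|^2\,d\mu$ by H\"{o}lder's inequality with exponents $\tfrac{m}{m-1}$ and $m$ followed by \textbf{(H3)(ii)}, obtaining the bound $C_4^{1/m}|X|_{2m}^{2m}$ (for $m=1$ this is immediate from \textbf{(H3)(ii)}). Dropping the non-negative drift then yields
$$\mathbb{E}|X^{\nu,\varepsilon}_\lambda(t)|_{2m}^{2m}\leq|x|_{2m}^{2m}+m(2m-1)C_4^{1/m}\int_0^t\mathbb{E}|X^{\nu,\varepsilon}_\lambda(s)|_{2m}^{2m}\,ds,$$
so Gronwall's lemma gives $\sup_{t\in[0,T]}\mathbb{E}|X^{\nu,\varepsilon}_\lambda(t)|_{2m}^{2m}\leq|x|_{2m}^{2m}e^{m(2m-1)C_4^{1/m}T}<\infty$, i.e. $X^{\nu,\varepsilon}_\lambda\in L^\infty([0,T];L^{2m}(\Omega;L^{2m}(\mu)))$. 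I would emphasise that this bound is uniform in $\varepsilon,\nu,\lambda$, which is exactly the form needed later for \eref{eqnarray12}.
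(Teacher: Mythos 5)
There is a genuine gap, and it sits exactly where you wave your hands: the justification for applying Theorem \ref{ito} to $|X^{\nu,\varepsilon}_\lambda(t)|_{2m}^{2m}$. Theorem \ref{ito} requires the process to be of the form \eref{eq:7.12} with $f\in\Bbb{L}^{2m}(T)$ and $g\in\Bbb{L}^{2m}(T;\ell_2)$; here $f=-A^{\nu,\varepsilon}_\lambda(X^{\nu,\varepsilon}_\lambda)$ and $g=B(\cdot,X^{\nu,\varepsilon}_\lambda)$, and verifying these memberships (via $|A^{\nu,\varepsilon}_\lambda(x)|_{2m}\leq\frac{2}{\varepsilon}|x|_{2m}$ and \textbf{(H3)(ii)}) presupposes $X^{\nu,\varepsilon}_\lambda\in\Bbb{L}^{2m}(T)$, which is precisely the statement of Lemma \ref{lemma4}. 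Your proposed escape, a ``standard approximation/localisation of the $L^{2m}$-norm,'' is not available in this setting: the solution is only known to have continuous paths in $L^2(\mu)$ (and $F^*_{1,2}$), so $t\mapsto|X^{\nu,\varepsilon}_\lambda(t)|_{2m}$ is merely lower semicontinuous with values in $[0,\infty]$; stopping times of the form $\tau_n=\inf\{t:|X^{\nu,\varepsilon}_\lambda(t)|_{2m}>n\}$ neither control the norm in the needed way nor can be shown a priori to exhaust $[0,T]$, since the $L^{2m}$-norm could be infinite on a nontrivial time set, and no convolution/smoothing is available on a general $\sigma$-finite standard measure space (this is exactly the obstruction stressed in Remark 7.1 as the reason Krylov's pathwise $L^p$-formula cannot be invoked). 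In short, your argument is circular: it is in substance the proof of Lemma \ref{lemma5}, which the paper runs only \emph{after} Lemma \ref{lemma4} has supplied the qualitative $L^{2m}$-integrability.

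The paper closes this loop by a different device, which is the actual content of the proof you were asked to reconstruct: it writes $X^{\nu,\varepsilon}_\lambda$ as the fixed point, obtained by iteration in $L^2\big(\Omega;C([0,T];L^2(\mu))\big)$, of the variation-of-constants map $F(X)=e^{-\cdot/\varepsilon}x+\frac{1}{\varepsilon}\int_0^\cdot e^{-(\cdot-s)/\varepsilon}J_\varepsilon(X(s))ds+\int_0^\cdot e^{-(\cdot-s)/\varepsilon}B(s,X(s))dW(s)$, and shows that $F$ leaves invariant the set $\mathcal{K}_R=\big\{X:\ e^{-2m\alpha t}\Bbb{E}|X(t)|_{2m}^{2m}\leq R^{2m}\big\}$ for $\alpha$ large and $R\geq2|x|_{2m}$. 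The drift part of $F$ is controlled by the contraction property \eref{eqnarray19}, and Theorem \ref{ito} is applied only to the stochastic convolution, i.e. to $e^{t/\varepsilon}Y(t)=\int_0^te^{s/\varepsilon}B(s,X(s))dW(s)$ with $X\in\mathcal{K}_R$ \emph{given}, so that the hypothesis $g\in\Bbb{L}^{2m}(T;\ell_2)$ is genuinely verified from \textbf{(H3)(ii)}; no circularity arises. Since $\mathcal{K}_R$ is stable under the convergence of the iterates (Fatou plus lower semicontinuity of $|\cdot|_{2m}$ under $L^2(\mu)$-convergence), the fixed point inherits the bound. Your drift-positivity computation (the truncation $g_k(r)=r|r|^{2m-2}(1+k|r|^{2m-2})^{-1}$ and \cite[Lemma 5.1]{RW}) and the H\"older--Gronwall step are correct as such --- they essentially reproduce \eref{eq:4.31} and Lemma \ref{lemma5}, and the positivity can in fact be had much more cheaply from \eref{eqnarray19} and H\"older as in \eref{eq:4.31} --- but they only become admissible once the invariance argument has established the integrability they rely on.
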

\begin{proof}
For $\alpha,R>0$, consider the set
\begin{eqnarray*}
\mathcal {K}_R=&&\!\!\!\!\!\!\!\!\big\{ X\in L^2\big([0,T];C([0,T];L^2(\mu))\big), e^{-2m\alpha t}\mathbb{E}|X(t)|_{2m}^{2m}\leq R^{2m}, \ t\in [0,T]\big\}.
\end{eqnarray*}
Since, by \eqref{eq:2}, $X^{\nu,\varepsilon}_\lambda$ is a fixed
point of the map
\begin{eqnarray*}
F:X\mapsto~e^{-\frac{\bullet}{\varepsilon}}x+\frac{1}{\varepsilon}\int_0^{\bullet}e^{-\frac{\bullet-s}{\varepsilon}}J_\varepsilon(X(s))ds+\int_0^{\bullet}e^{-\frac{\bullet-s}{\varepsilon}}B(s,X(s))dW(s),
\end{eqnarray*}
obtained by iteration in $L^2\big(\Omega;C([0,T];L^2(\mu))\big)$,
it suffices to show that $F$ leaves the set $\mathcal {K}_R$
invariant for $\alpha,R>0$ large enough. By \eqref{eqnarray19} we have that
for $X\in \mathcal {K}_R$, $t\geq0$
\begin{eqnarray}\label{eq:6}
&&\!\!\!\!\!\!\!\!\Big[e^{-2m\alpha t}\mathbb{E}\Big|e^{-\frac{t}{\varepsilon}}x+\frac{1}{\varepsilon}\int_0^te^{-\frac{t-s}{\varepsilon}}J_\varepsilon(X(s))ds\Big|_{2m}^{2m}\Big]^{\frac{1}{2m}}\nonumber \\
\leq&& \!\!\!\!\!\!\!\!e^{-\alpha
t}e^{-\frac{t}{\varepsilon}}|x|_{2m}+e^{-\alpha
t}\Big[\mathbb{E}\Big(\int_0^t\frac{1}{\varepsilon}e^{-\frac{t-s}{\varepsilon}}|X(s)|_{2m}ds\Big)^{2m}\Big]^{\frac{1}{2m}}\nonumber\\
\leq&& \!\!\!\!\!\!\!\!e^{-(\alpha+\frac{1}{\varepsilon})t}|x|_{2m}+e^{-\alpha t}\int_0^t\frac{1}{\varepsilon}e^{-\frac{t-s}{\varepsilon}}\big(\mathbb{E}|X(s)|_{2m}^{2m}\big)^{\frac{1}{2m}}ds \nonumber\\
\leq&&\!\!\!\!\!\!\!\!e^{-(\alpha+\frac{1}{\varepsilon})t}|x|_{2m}+\frac{R}{1+\alpha\varepsilon}.
\end{eqnarray}

Set
\begin{eqnarray*}
  Y(t)=\int_0^te^{-\frac{t-s}{\varepsilon}} B(s,X(s))dW(s),\
t\geq0.
\end{eqnarray*}
Then $Y$ is a solution to the following SDE on $L^2(\mu)$:
\begin{equation*}
\left\{ \begin{aligned}
&dY(t)+\frac{1}{\varepsilon}Y(t)dt=B(t,X(t))dW(t),\ t\geq0,\\
&Y(0)=0,
\end{aligned} \right.
\end{equation*}
equivalently,
$$d\big(e^{\frac{t}{\varepsilon}}Y(t)\big)=e^{\frac{t}{\varepsilon}}B(t,X(t))dW(t),~t\geq0,~Y(0)=0.$$
By Hypothesis \textbf{(H3)(ii)}, we may apply It\^{o}'s formula in expectation from Theorem \ref{ito} in the Appendix with $u(t)$ replaced by $e^{\frac{t}{\varepsilon}}Y(t)$. Then by H\"{o}lder's and Young's inequality and \textbf{(H3)(ii)}, we obtain for $t\in[0,T]$
\begin{eqnarray*}\label{eqnarray43}
&&\!\!\!\!\!\!\!\!\mathbb{E}\big|e^{\frac{t}{\varepsilon}}Y(t)\big|_{2m}^{2m}\nonumber\\
=&&\!\!\!\!\!\!\!\!m(2m-1)\mathbb{E}\int_0^t\int_E\big|e^{\frac{s}{\varepsilon}}Y(s)\big|^{2m-2}\cdot\sum_{k=1}^\infty\big|e^{\frac{s}{\varepsilon}}B(s,X(s))e_k\big|^2d\mu
ds\nonumber\\
\leq&&\!\!\!\!\!\!\!\!m(2m-1)\mathbb{E}\int_0^t\Big(\int_E|e^{\frac{s}{\varepsilon}}Y(s)|^{2m-2\cdot\frac{m}{m-1}}d\mu \Big)^{\frac{m-1}{m}}\cdot \Bigg(\int_E\Big(\sum_{k=1}^\infty|e^{\frac{s}{\varepsilon}}B(s,X(s))e_k|^2\Big)^{m}d\mu\Bigg)^{\frac{1}{m}}ds\nonumber\\
=&&\!\!\!\!\!\!\!\!m(2m-1)\mathbb{E}\int_{0}^{t}|e^{\frac{s}{\varepsilon}}Y(s)|^{2m-2}_{2m}\cdot\Bigg(\int_E\Big(\sum_{k=1}^{\infty}|e^{\frac{s}{\varepsilon}}B(s,X(s))e_k|^2\Big)^{m}d\mu\Bigg)^{\frac{1}{m}}ds\nonumber\\
\leq&&\!\!\!\!\!\!\!\!m(2m-1)\mathbb{E}\int_{0}^{t}\frac{\Big(|e^{\frac{s}{\varepsilon}}Y(s)|^{2m-2}_{2m}\Big)^{\frac{m}{m-1}}}{\frac{m}{m-1}}+\frac{\Big(\int_E\big(\sum_{k=1}^{\infty}|e^{\frac{s}{\varepsilon}}B(s,X(s))e_k|^2\big)^{m}d\mu\Big)^{\frac{1}{m}\cdot m}}{m}ds\nonumber\\
=&&\!\!\!\!\!\!\!\!(m-1)(2m-1)\mathbb{E}\int_0^t|e^{\frac{s}{\varepsilon}}Y(s)\big|_{2m}^{2m}ds+(2m-1)\Bbb{E}\int_{0}^{t}\int_E\Big(\sum_{k=1}^{\infty}|e^{\frac{s}{\varepsilon}}B(s,X(s))e_k|^2\Big)^{m}ds\nonumber\\
\leq&&\!\!\!\!\!\!\!\!(m-1)(2m-1)\mathbb{E}\int_0^t|e^{\frac{s}{\varepsilon}}Y(s)\big|_{2m}^{2m}ds+C_4(2m-1)\Bbb{E}\int_{0}^{t}|e^{\frac{s}{\varepsilon}}X(s)|_{2m}^{2m}ds,
\end{eqnarray*}
and therefore, by Gronwall's lemma, we obtain
\begin{eqnarray*}\label{eq:001}
\mathbb{E}\big|e^{\frac{t}{\varepsilon}}Y(t)\big|_{2m}^{2m}&&\!\!\!\!\!\!\!\!\leq C_4(2m-1)e^{(m-1)(2m-1)T}\int_0^t\Bbb{E}|e^{\frac{s}{\varepsilon}}X(s)|_{2m}^{2m}ds\nonumber\\
&&\!\!\!\!\!\!\!\!\leq C_4(2m-1)e^{(m-1)(2m-1)T}\int_0^tR^{2m}e^{(\frac{2m}{\varepsilon}+2m\alpha)s}ds\nonumber\\
&&\!\!\!\!\!\!\!\!\leq \frac{\varepsilon R^{2m}C_{T,m}}{2m(1+\varepsilon\alpha)}e^{\frac{2 mt(1+\varepsilon\alpha)}{\varepsilon}},
\end{eqnarray*}
which yields
\begin{eqnarray}\label{eq:002}
&&e^{-2m\alpha t}\mathbb{E}|Y(t)|_{2m}^{2m}\leq \frac{\varepsilon R^{2m} C_{T,m}
}{(1+\varepsilon\alpha)},~\forall t\in [0,T].
\end{eqnarray}
Then, by formulas \eqref{eq:6}, \eqref{eq:002}, we infer that for
$\alpha$ large enough and $R\geq2|x|_{2m}$, the map $F$ leaves
$\mathcal {K}_R$ invariant as claimed.
\end{proof}


\begin{lemma}\label{lemma5}
For $x\in L^2(\mu)\cap L^{2m}(\mu)$, there exists $C\in (0,\infty)$ such that
\begin{eqnarray}\label{eqna11}
\sup_{t\in[0,T]}\mathbb{E}|X^{\nu,\varepsilon}_\lambda(t)|_{2m}^{2m}\leq
C|x|_{2m}^{2m}, \ \forall\varepsilon, \lambda\in(0,1), \nu\in(0,1].
\end{eqnarray}
\end{lemma}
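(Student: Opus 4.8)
The plan is to apply the $L^{2m}(\mu)$-It\^o formula in expectation (Theorem 7.1) directly to the process $X^{\nu,\varepsilon}_\lambda$ solving \eref{eq:2}, thereby computing $\mathbb{E}|X^{\nu,\varepsilon}_\lambda(t)|_{2m}^{2m}$. By Lemma \ref{lemma4} we already know that $X^{\nu,\varepsilon}_\lambda\in L^\infty\big([0,T];L^{2m}(\Omega;L^{2m}(\mu))\big)$, which guarantees that all terms below are finite and, in particular, makes the final Gronwall step legitimate. Since the drift in \eref{eq:2} is $-A^{\nu,\varepsilon}_\lambda(X^{\nu,\varepsilon}_\lambda)$, writing $X:=X^{\nu,\varepsilon}_\lambda$ for brevity, the It\^o formula should yield for $t\in[0,T]$
\begin{eqnarray*}
&&\!\!\!\!\!\!\!\!\mathbb{E}|X(t)|_{2m}^{2m}\nonumber\\
=&&\!\!\!\!\!\!\!\!|x|_{2m}^{2m}
-2m\,\mathbb{E}\int_0^t\int_E A^{\nu,\varepsilon}_\lambda(X(s))\,X(s)|X(s)|^{2m-2}\,d\mu\,ds\nonumber\\
&&\!\!\!\!\!\!\!\!+m(2m-1)\,\mathbb{E}\int_0^t\int_E |X(s)|^{2m-2}\sum_{k=1}^\infty|B(s,X(s))e_k|^2\,d\mu\,ds.
\end{eqnarray*}

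The first key step is to show that the drift contribution is nonnegative, so that the corresponding term may simply be dropped. Using $A^{\nu,\varepsilon}_\lambda=\frac{1}{\varepsilon}(I-J_\varepsilon)$ and the fact that $X|X|^{2m-2}$ belongs to $L^{\frac{2m}{2m-1}}(\mu)$ with $\big||X|^{2m-1}\big|_{\frac{2m}{2m-1}}=|X|_{2m}^{2m-1}$, I would estimate by H\"older's inequality and the contraction property \eref{eqnarray19} (applied with $p=2m$) that
$$\int_E J_\varepsilon(X)\,X|X|^{2m-2}\,d\mu\leq |J_\varepsilon(X)|_{2m}\,|X|_{2m}^{2m-1}\leq |X|_{2m}^{2m},$$
whence $\int_E A^{\nu,\varepsilon}_\lambda(X)\,X|X|^{2m-2}\,d\mu=\frac{1}{\varepsilon}\big(|X|_{2m}^{2m}-\int_E J_\varepsilon(X)X|X|^{2m-2}d\mu\big)\geq0$, so the drift term in the It\^o formula is $\leq0$. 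For the noise term I would repeat verbatim the H\"older--Young argument already carried out in \eref{eqnarray43}: H\"older's inequality with exponents $\frac{m}{m-1}$ and $m$ bounds the inner integral by $|X|_{2m}^{2m-2}\big(\int_E(\sum_{k}|B(s,X)e_k|^2)^m d\mu\big)^{1/m}$, and \textbf{(H3)(ii)} together with Young's inequality then produce a bound of the form $C|X|_{2m}^{2m}$ with $C$ depending only on $m$ and $C_4$. Collecting the estimates gives
$$\mathbb{E}|X^{\nu,\varepsilon}_\lambda(t)|_{2m}^{2m}\leq |x|_{2m}^{2m}+C\int_0^t\mathbb{E}|X^{\nu,\varepsilon}_\lambda(s)|_{2m}^{2m}\,ds,\quad t\in[0,T],$$
and Gronwall's lemma then yields \eref{eqna11} with a constant of the form $e^{CT}$ independent of $\varepsilon,\lambda,\nu$.

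The main obstacle, and the reason the auxiliary equation \eref{eq:2} was introduced at all, is the justification of the It\^o formula itself: $X^{\nu,\varepsilon}_\lambda$ is not (right-)continuous in $L^{2m}(\mu)$, so no pathwise It\^o formula is available, which is precisely why the expectation version Theorem 7.1 must be invoked in tandem with the a priori regularity from Lemma \ref{lemma4}. The only genuinely delicate bookkeeping in the estimate is to verify that every constant is independent of $\varepsilon,\lambda,\nu$: in the drift-sign argument the contraction \eref{eqnarray19} holds with constant $1$ regardless of the parameters, while in the noise estimate the constant depends only on $m$ and $C_4$. It is exactly this uniformity that renders \eref{eqna11} usable later, when successively letting $\varepsilon\to0$, $\nu\to0$ and $\lambda\to0$.
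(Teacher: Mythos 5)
Your proposal follows essentially the same route as the paper: the It\^{o} formula in expectation (Theorem 7.1) applied with $p=2m$, the drift term discarded after the H\"older-plus-contraction argument via \eref{eqnarray19} (this is exactly the paper's computation \eref{eq:4.31}), the noise term handled by the H\"older--Young argument of \eref{eqnarray43} together with \textbf{(H3)(ii)}, and finally Gronwall. All of these steps are correct and the constants are indeed uniform in $\varepsilon,\lambda,\nu$, as you emphasize.

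There is, however, one small but genuine omission at the very end. The a priori regularity from Lemma \ref{lemma4} is only $X^{\nu,\varepsilon}_\lambda\in L^\infty\big([0,T];L^{2m}(\Omega;L^{2m}(\mu))\big)$, i.e.\ an \emph{essentially} bounded-in-time statement, so the Gronwall argument as you set it up only yields
$\esssup_{t\in[0,T]}\mathbb{E}|X^{\nu,\varepsilon}_\lambda(t)|_{2m}^{2m}\leq C|x|_{2m}^{2m}$,
whereas \eref{eqna11} asserts a bound at \emph{every} $t\in[0,T]$. The paper closes this gap with an extra step you do not mention: since $X^{\nu,\varepsilon}_\lambda$ has $\mathbb{P}$-a.s.\ continuous paths in $L^2(\mu)$, the map $t\mapsto|X^{\nu,\varepsilon}_\lambda(t)|_{2m}$ is lower semicontinuous (by Fatou along $\mu$-a.e.\ convergent subsequences), hence so is $t\mapsto\mathbb{E}|X^{\nu,\varepsilon}_\lambda(t)|_{2m}^{2m}$ by Fatou in $\omega$, and a lower semicontinuous function bounded a.e.\ by a constant is bounded everywhere by that constant. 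Alternatively one could argue that, once $f:=-A^{\nu,\varepsilon}_\lambda(X^{\nu,\varepsilon}_\lambda)\in\Bbb{L}^{2m}(T)$ and $g:=B(\cdot,X^{\nu,\varepsilon}_\lambda)\in\Bbb{L}^{2m}(T;\ell_2)$ are known, the process coincides for each fixed $t$ (a.s.) with the $L^{2m}(\mu)$-continuous process $u$ constructed in Theorem 7.1, so that \eref{eq:7.1} and hence the Gronwall inequality hold pointwise in $t$; but either way this identification or the semicontinuity argument must be made explicit, and your write-up does neither.
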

\begin{proof}
Applying the It\^{o} formula in expectation to $\mathbb{E}|X^{\nu,\varepsilon}_\lambda(t)|_{2m}^{2m}$ from Theorem \ref{ito} in the Appendix, we obtain
\begin{eqnarray}\label{eqna8}
  \mathbb{E}|X^{\nu,\varepsilon}_\lambda(t)|_{2m}^{2m}=&&\!\!\!\!\!\!\!\!|x|_{2m}^{2m}-2m\mathbb{E}\int_0^t\int_EA^{\nu,\varepsilon}_\lambda(X^{\nu,\varepsilon}_\lambda(s))X^{\nu,\varepsilon}_\lambda(s)|X^{\nu,\varepsilon}_\lambda(s)|^{2m-2}d\mu ds\nonumber\\
&&\!\!\!\!\!\!\!\!+m(2m-1)\mathbb{E}\int_0^t\int_E|X^{\nu,\varepsilon}_\lambda(s)|^{2m-2}\cdot \sum_{k=1}^\infty|B(s,X^{\nu,\varepsilon}_\lambda(s))e_k|^2d\mu
ds.
\end{eqnarray}
Recall that
$A^{\nu,\varepsilon}_\lambda(X^{\nu,\varepsilon}_\lambda(s))=\frac{1}{\varepsilon}(X^{\nu,\varepsilon}_\lambda(s)-J_\varepsilon(X^{\nu,\varepsilon}_\lambda(s)))$, so we have
\begin{eqnarray}\label{eqna9}
&&\!\!\!\!\!\!\!\!\int_EA^{\nu,\varepsilon}_\lambda(X^{\nu,\varepsilon}_\lambda(s))X^{\nu,\varepsilon}_\lambda(s)|X^{\nu,\varepsilon}_\lambda(s)|^{2m-2}d\mu\nonumber\\
=&&\!\!\!\!\!\!\!\!\frac{1}{\varepsilon}\int_E|X^{\nu,\varepsilon}_\lambda(s)|^{2m}d\mu-\frac{1}{\varepsilon}\int_EJ_\varepsilon(X^{\nu,\varepsilon}_\lambda(s))X^{\nu,\varepsilon}_\lambda(s)|X^{\nu,\varepsilon}_\lambda(s)|^{2m-2}d\mu.
\end{eqnarray}
By H\"{o}lder's inequality and \eqref{eqnarray19}, we conclude
\begin{eqnarray}\label{eq:4.31}
    &&\!\!\!\!\!\!\!\!\frac{1}{\varepsilon}\int_E|X^{\nu,\varepsilon}_\lambda(s)|^{2m}d\mu-\frac{1}{\varepsilon}\int_EJ_\varepsilon(X^{\nu,\varepsilon}_\lambda(s))X^{\nu,\varepsilon}_\lambda(s)|X^{\nu,\varepsilon}_\lambda(s)|^{2m-2}d\mu\nonumber\\
\geq&&\!\!\!\!\!\!\!\!\frac{1}{\varepsilon}\int_E|X^{\nu,\varepsilon}_\lambda(s)|^{2m}d\mu-\frac{1}{\varepsilon}\Big[|J_\varepsilon(X^{\nu,\varepsilon}_\lambda(s))|_{2m}\cdot|X^{\nu,\varepsilon}_\lambda(s)|_{2m}^{2m-1}\Big]\nonumber\\
\geq&&\!\!\!\!\!\!\!\!\frac{1}{\varepsilon}\int_E|X^{\nu,\varepsilon}_\lambda(s)|^{2m}d\mu-\frac{1}{\varepsilon}|X^{\nu,\varepsilon}_\lambda(s)|_{2m}\cdot|X^{\nu,\varepsilon}_\lambda(s)|_{2m}^{2m-1}\nonumber\\
   =&&\!\!\!\!\!\!\!\!0.
\end{eqnarray}
By \eqref{eqna8}-\eqref{eq:4.31} and using a similar argument as in \eqref{eqnarray43}, we get
\begin{eqnarray*}
  \mathbb{E}|X^{\nu,\varepsilon}_\lambda(t)|_{2m}^{2m}&&\!\!\!\!\!\!\!\!\leq|x|_{2m}^{2m}+(2m-1)\mathbb{E}\int_0^t(m-1)|X^{\nu,\varepsilon}_\lambda(s)|^{2m}_{2m}+C_4|X^{\nu,\varepsilon}_\lambda(s)|^{2m}_{2m}ds\nonumber\\
&&\!\!\!\!\!\!\!\!=|x|_{2m}^{2m}+(2m-1)(m-1+C_4)\mathbb{E}\int_0^t|X^{\nu,\varepsilon}_\lambda(s)|^{2m}_{2m}ds.
\end{eqnarray*}
As a result, by Gronwall's lemma, there exists $C\in(0,\infty)$ such that
\begin{eqnarray*}
\emph{esssup}_{t\in[0,T]}\mathbb{E}|X^{\nu,\varepsilon}_\lambda(t)|_{2m}^{2m}\leq
C|x|_{2m}^{2m}, \ \forall~\varepsilon, \lambda\in(0,1), \nu\in(0,1].
\end{eqnarray*}
Since $t\mapsto |X^{\nu,\varepsilon}_\lambda(t)|_{2m}$ is lower semi-continuous and hence so is $t\mapsto \Bbb{E}|X^{\nu,\varepsilon}_\lambda(t)|_{2m}^{2m}$, \eqref{eqna11} follows.

\end{proof}


\begin{lemma}\label{lemma6}
For $x\in L^2(\mu)\cap L^{2m}(\mu)$ and $X^{\nu,\varepsilon}_\lambda$ as above. Then as $\varepsilon\longrightarrow0$, we have
$$X^{\nu,\varepsilon}_\lambda\longrightarrow X^{\nu}_\lambda\ \text{strongly\ in}\ L^2(\Omega;C([0,T];F^*_{1,2})),$$
where $X^{\nu}_\lambda$ is the solution to \eqref{eq:4}. Furthermore, there exists $C\in(0,\infty)$ such that
\begin{eqnarray}\label{4.4}
\sup_{t\in[0,T]}\Bbb{E}|X^\nu_\lambda(t)|_{2m}^{2m}\leq C|x|_{2m}^{2m},~~\forall~\lambda\in(0,1), \nu\in(0,1].
\end{eqnarray}
\end{lemma}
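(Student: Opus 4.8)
The plan is to compare $X^{\nu,\varepsilon}_\lambda$ directly with the solution $X^\nu_\lambda$ of \eref{eq:4}, whose existence together with the regularity $X^\nu_\lambda\in L^2(\Omega\times[0,T];F_{1,2})\cap L^2(\Omega;C([0,T];F^*_{1,2}))$ is already available from \cite[Lemma 3.1]{RWX}. Setting $Z:=X^{\nu,\varepsilon}_\lambda-X^\nu_\lambda$ (so $Z(0)=0$), both processes are $F^*_{1,2}$-valued It\^o processes whose drifts, namely $-A^{\nu,\varepsilon}_\lambda(X^{\nu,\varepsilon}_\lambda)$ and $-(\nu-L)(\Psi_\lambda+\lambda I)(X^\nu_\lambda)$, lie in $L^2(\Omega\times[0,T];F^*_{1,2})$; the first since $A^{\nu,\varepsilon}_\lambda$ is Lipschitz on $F^*_{1,2}$, the second since $(\Psi_\lambda+\lambda I)(X^\nu_\lambda)\in L^2(\Omega\times[0,T];F_{1,2})$ and $(\nu-L)\colon F_{1,2}\to F^*_{1,2}$ is bounded. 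Hence I would apply the It\^o formula for the Hilbert norm $\|\cdot\|_{F^*_{1,2,\nu}}$ to $\|Z(t)\|^2_{F^*_{1,2,\nu}}$.

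First I would record two elementary facts. The map $(\nu-L)\colon F_{1,2}\to F^*_{1,2}$ is an isometry for $\|\cdot\|_{F_{1,2,\nu}}$ and $\|\cdot\|_{F^*_{1,2,\nu}}$, because $\|(\nu-L)u\|^2_{F^*_{1,2,\nu}}=\langle(\nu-L)u,u\rangle_2=\mathscr{E}(u,u)+\nu|u|^2_2=\|u\|^2_{F_{1,2,\nu}}$. Moreover, by \textbf{(H4)} the nondecreasing Lipschitz map $\Psi_\lambda+\lambda I$ (vanishing at $0$) maps $F_{1,2}$ into $F_{1,2}$ with $\mathscr{E}((\Psi_\lambda+\lambda I)u,(\Psi_\lambda+\lambda I)u)\le C_5^2\,\mathscr{E}(u,u)$, which follows from \textbf{(H4)(ii)} and the Cauchy--Schwarz inequality for $\Gamma$. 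Combining these with $X^\nu_\lambda\in L^2(\Omega\times[0,T];F_{1,2})$ gives the crucial finiteness
\[
\mathbb{E}\int_0^T\|(\nu-L)(\Psi_\lambda+\lambda I)(X^\nu_\lambda(s))\|^2_{F^*_{1,2,\nu}}\,ds
=\mathbb{E}\int_0^T\|(\Psi_\lambda+\lambda I)(X^\nu_\lambda(s))\|^2_{F_{1,2,\nu}}\,ds<\infty.
\]

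The heart of the argument is the lower bound on the drift. Writing $X^{\nu,\varepsilon}_\lambda=J_\varepsilon(X^{\nu,\varepsilon}_\lambda)+\varepsilon A^{\nu,\varepsilon}_\lambda(X^{\nu,\varepsilon}_\lambda)$ and $A^{\nu,\varepsilon}_\lambda(X^{\nu,\varepsilon}_\lambda)=A^\nu_\lambda(J_\varepsilon(X^{\nu,\varepsilon}_\lambda))$, I would split
\[
\big\langle A^{\nu,\varepsilon}_\lambda(X^{\nu,\varepsilon}_\lambda)-A^\nu_\lambda(X^\nu_\lambda),\,Z\big\rangle_{F^*_{1,2,\nu}}
=\big\langle A^\nu_\lambda(J_\varepsilon(X^{\nu,\varepsilon}_\lambda))-A^\nu_\lambda(X^\nu_\lambda),\,J_\varepsilon(X^{\nu,\varepsilon}_\lambda)-X^\nu_\lambda\big\rangle_{F^*_{1,2,\nu}}
+\varepsilon\big\langle A^{\nu,\varepsilon}_\lambda(X^{\nu,\varepsilon}_\lambda)-A^\nu_\lambda(X^\nu_\lambda),\,A^{\nu,\varepsilon}_\lambda(X^{\nu,\varepsilon}_\lambda)\big\rangle_{F^*_{1,2,\nu}}.
\]
The first term is nonnegative by the monotonicity of $A^\nu_\lambda$ on $F^*_{1,2,\nu}$ (see \eref{eqnarray31}), since $J_\varepsilon(X^{\nu,\varepsilon}_\lambda),X^\nu_\lambda\in F_{1,2}$; the second equals $\varepsilon\|A^{\nu,\varepsilon}_\lambda(X^{\nu,\varepsilon}_\lambda)\|^2_{F^*_{1,2,\nu}}-\varepsilon\langle A^\nu_\lambda(X^\nu_\lambda),A^{\nu,\varepsilon}_\lambda(X^{\nu,\varepsilon}_\lambda)\rangle_{F^*_{1,2,\nu}}$, which by Young's inequality is bounded below by $-\tfrac{\varepsilon}{2}\|A^\nu_\lambda(X^\nu_\lambda)\|^2_{F^*_{1,2,\nu}}$. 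Thus the drift contribution $-2\langle\cdots,Z\rangle$ in the It\^o formula is dominated by $\varepsilon\|A^\nu_\lambda(X^\nu_\lambda)\|^2_{F^*_{1,2,\nu}}$. Estimating the noise term by \textbf{(H2)(i)}, bounding the stochastic integral by Burkholder--Davis--Gundy and absorbing, I arrive at
\[
\mathbb{E}\Big[\sup_{r\le t}\|Z(r)\|^2_{F^*_{1,2,\nu}}\Big]
\le\varepsilon\,\mathbb{E}\int_0^T\|A^\nu_\lambda(X^\nu_\lambda)\|^2_{F^*_{1,2,\nu}}\,ds
+C\int_0^t\mathbb{E}\Big[\sup_{r\le s}\|Z(r)\|^2_{F^*_{1,2,\nu}}\Big]\,ds,
\]
and Gronwall's lemma yields $\mathbb{E}\sup_{t\le T}\|Z(t)\|^2_{F^*_{1,2,\nu}}\le C\varepsilon\to0$; by the norm equivalence \eref{eqnal19} this is precisely the claimed convergence in $L^2(\Omega;C([0,T];F^*_{1,2}))$.

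Finally, \eref{4.4} follows by passing the uniform bound of Lemma \ref{lemma5} to the limit: for each $t$ the family $\{X^{\nu,\varepsilon}_\lambda(t)\}_\varepsilon$ is bounded in the reflexive space $L^{2m}(\Omega\times E)$, so along a subsequence it converges weakly there; since it also converges to $X^\nu_\lambda(t)$ in $F^*_{1,2}$, the weak limit is identified with $X^\nu_\lambda(t)$, and the weak lower semicontinuity of the $L^{2m}$-norm gives $\mathbb{E}|X^\nu_\lambda(t)|^{2m}_{2m}\le\liminf_{\varepsilon\to0}\mathbb{E}|X^{\nu,\varepsilon}_\lambda(t)|^{2m}_{2m}\le C|x|^{2m}_{2m}$. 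I expect the main obstacle to be the drift lower bound: it is essential to compare against the genuine solution $X^\nu_\lambda$, whose $F_{1,2}$-regularity makes $\mathbb{E}\int_0^T\|A^\nu_\lambda(X^\nu_\lambda)\|^2_{F^*_{1,2,\nu}}\,ds$ finite, rather than to prove that $\{X^{\nu,\varepsilon}_\lambda\}_\varepsilon$ is Cauchy, since a bound on $\mathbb{E}\int_0^T\|A^{\nu,\varepsilon}_\lambda(X^{\nu,\varepsilon}_\lambda)\|^2_{F^*_{1,2,\nu}}\,ds$ uniform in $\varepsilon$ is not available.
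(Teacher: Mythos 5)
Your proof is correct in its essentials, but it takes a genuinely different route from the paper's. The paper never compares $X^{\nu,\varepsilon}_\lambda$ with $X^\nu_\lambda$ directly: it first proves (Claim \ref{claim1}) that $\{X^{\nu,\varepsilon}_\lambda\}_{\varepsilon\in(0,1)}$ is Cauchy in $L^2(\Omega;C([0,T];F^*_{1,2}))$, by applying It\^{o}'s formula to the difference of \emph{two approximants} $X^{\nu,\varepsilon}_\lambda-X^{\nu,\eta}_\lambda$ and controlling the error terms through the uniform-in-$\varepsilon$ estimate of Proposition \ref{Proposition7.1}; it then identifies the limit with $X^\nu_\lambda$ (Claim \ref{claim2}) via weak-compactness arguments and the uniqueness part of \cite[Lemma 3.1]{RWX}. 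You instead compare directly with $X^\nu_\lambda$, exploiting its regularity $X^\nu_\lambda\in L^2(\Omega\times[0,T];F_{1,2})$ (which the paper also quotes from \cite[Lemma 3.1]{RWX}) to make $\mathbb{E}\int_0^T\|A^\nu_\lambda(X^\nu_\lambda(s))\|^2_{F^*_{1,2,\nu}}\,ds$ finite. Your resolvent splitting of $Z$, the use of the monotonicity \eref{eqnarray31}, Young's inequality, \textbf{(H2)(i)}, BDG and Gronwall are all sound, and yield the quantitative rate $\mathbb{E}\sup_{t\leq T}\|X^{\nu,\varepsilon}_\lambda(t)-X^\nu_\lambda(t)\|^2_{F^*_{1,2,\nu}}\leq C\varepsilon$, which by \eref{eqnal19} gives the claimed convergence; this is shorter than the paper's argument and makes the identification step (Claim \ref{claim2}) unnecessary. (Incidentally, for your bound $\mathscr{E}\big((\Psi_\lambda+\lambda I)(u),(\Psi_\lambda+\lambda I)(u)\big)\leq C\,\mathscr{E}(u,u)$ you do not even need \textbf{(H4)}: since $\Psi_\lambda+\lambda I$ is Lipschitz with $\Psi_\lambda(0)=0$, this is the normal contraction property of any Dirichlet form.)

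Two caveats. First, your concluding justification is wrong: a bound on $\mathbb{E}\int_0^T\|A^{\nu,\varepsilon}_\lambda(X^{\nu,\varepsilon}_\lambda(s))\|^2_{F^*_{1,2,\nu}}\,ds$ uniform in $\varepsilon$ \emph{is} available. Since $A^{\nu,\varepsilon}_\lambda(X^{\nu,\varepsilon}_\lambda)=(\nu-L)\big((\Psi_\lambda+\lambda I)(J_\varepsilon(X^{\nu,\varepsilon}_\lambda))\big)$, this is precisely Proposition \ref{Proposition7.1} (proved from \textbf{(H4)} and the $L^2$-estimate \eref{eqna18}), and it is exactly what powers the paper's Cauchy argument; so your preference is a matter of economy, not necessity. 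Second, in your proof of \eref{4.4} the identification of the weak $L^{2m}(\Omega\times E)$-limit of $X^{\nu,\varepsilon}_\lambda(t)$ with $X^\nu_\lambda(t)$ is too quick: when $\mu(E)=\infty$ and $m>1$, $L^{2m}(\mu)$ embeds neither into $L^2(\mu)$ nor into $F^*_{1,2}$, so weak $L^{2m}$-convergence and strong $F^*_{1,2}$-convergence a priori live in incomparable spaces and cannot be played against each other directly. The repair is the one the paper uses: by \eref{eqna7} the family is also bounded in $L^2(\Omega\times E)$, so along a further subsequence it converges weakly there; since $L^2(\mu)\subset F^*_{1,2}$ continuously, that weak limit must be $X^\nu_\lambda(t)$, and it coincides with the weak $L^{2m}$-limit upon testing against functions $\psi(\omega)v(\xi)$ with $\psi\in L^\infty(\Omega)$, $v\in L^1(\mu)\cap L^\infty(\mu)$. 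With this inserted, your lower-semicontinuity conclusion from Lemma \ref{lemma5} stands and gives \eref{4.4} for every $t\in[0,T]$.
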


\begin{proof}
We prove the lemma in two steps, which are given as two claims.

%

\begin{claim}\label{claim1}
For each $x\in L^2(\mu)$, the sequence
$\{X^{\nu,\varepsilon}_\lambda\}$, $0<\varepsilon<1$, is Cauchy in
$L^2(\Omega;C([0,T];F^*_{1,2}))$.
\end{claim}
\begin{proof}
Let $\varepsilon,\eta>0$. Applying the It\^{o} formula (\cite[Theorem 4.2.5]{LR} with $V:=L^2(\mu)$, $H:=F^*_{1,2,\nu}$, $\alpha=2$, $X_0=x$) to
$\|X^{\nu,\varepsilon}_\lambda-X^{\nu,\eta}_\lambda\|^2_{F^*_{1,2,\nu}}$,
we have
\begin{eqnarray}\label{eqn1}
&&\!\!\!\!\!\!\!\!d\|X^{\nu,\varepsilon}_\lambda(t)-X^{\nu,\eta}_\lambda(t)\|^2_{F^*_{1,2,\nu}}\nonumber\\
&&\!\!\!\!\!\!\!\!+2\big\langle(\nu-L)\big((\Psi_\lambda+\lambda I)(J_\varepsilon(X^{\nu,\varepsilon}_\lambda(t)))-(\Psi_\lambda+\lambda)(J_\eta(X^{\nu,\eta}_\lambda(t)))\big),X^{\nu,\varepsilon}_\lambda(t)-X^{\nu,\eta}_\lambda(t)\big\rangle_{F^*_{1,2,\nu}}dt\nonumber\\
=&&\!\!\!\!\!\!\!\!2\big\langle
X^{\nu,\varepsilon}_\lambda(t)-X^{\nu,\eta}_\lambda(t),
\big(B(t,X^{\nu,\varepsilon}_\lambda(t))-B(t,X^{\nu,\eta}_\lambda(t))\big)dW(t)\big\rangle_{F^*_{1,2,\nu}}\nonumber\\
&&\!\!\!\!\!\!\!\!+\big\|B(t,X^{\nu,\varepsilon}_\lambda(t))-B(t,X^{\nu,\eta}_\lambda(t))\big\|^2_{L_2(L^2(\mu),F^*_{1,2,\nu})}dt.
\end{eqnarray}
The second term in the left hand-side of the above equality, by \eqref{eqnarray18}, \eqref{equation3} and \eqref{eqnarray48}, is equal to
\begin{eqnarray}\label{eqn2}
&&\!\!\!\!\!\!\!\!2\big\langle(\Psi_\lambda+\lambda I)(J_\varepsilon(X^{\nu,\varepsilon}_\lambda(t)))-(\Psi_\lambda+\lambda I)(J_\eta(X^{\nu,\eta}_\lambda(t))),J_\varepsilon(X^{\nu,\varepsilon}_\lambda(t))-J_\eta(X^{\nu,\eta}_\lambda(t))\big\rangle_2dt\nonumber\\
+&&\!\!\!\!\!\!\!\!2\big\langle(\nu-L)\big((\Psi_\lambda+\lambda I)(J_\varepsilon(X^{\nu,\varepsilon}_\lambda(s)))\big)-(\nu-L)\big((\Psi_\lambda+\lambda I)(J_\eta(X^{\nu,\eta}_\lambda(s)))\big),\nonumber\\
&&\!\!\!\!\varepsilon(\nu-L)\big((\Psi_\lambda+\lambda I)(J_\varepsilon(X^{\nu,\varepsilon}_\lambda(s)))\big)-\eta(\nu-L)\big((\Psi_\lambda+\lambda I)(J_\eta(X^{\nu,\eta}_\lambda(s)))\big)\big\rangle_{F^*_{1,2,\nu}}\!\!\!\!dt.
\end{eqnarray}
Taking \eqref{eqn2} into \eqref{eqn1}, then taking expectation of both sides, we obtain for all $t\in[0,T]$
\begin{eqnarray}\label{4.5}
&&\!\!\!\!\!\!\!\!\mathbb{E}\sup_{r\in[0,t]}\|X^{\nu,\varepsilon}_\lambda(r)-X^{\nu,\eta}_\lambda(r)\|^2_{F^*_{1,2,\nu}}\nonumber\\
&&\!\!\!\!\!\!\!\!-2\mathbb{E}\Big[\sup_{r\in[0,t]}\Big|\int_0^r\big\langle
X^{\nu,\varepsilon}_\lambda(s)-X^{\nu,\eta}_\lambda(s),(B(s,X^{\nu,\varepsilon}_\lambda(s))-B(s,X^{\nu,\eta}_\lambda(s)))dW(s)\big\rangle_{F^*_{1,2,\nu}}\Big|\Big]\nonumber\\
&&\!\!\!\!\!\!\!\!+2\mathbb{E}\int_0^t\big\langle(\Psi_\lambda+\lambda I)\big(J_\varepsilon(X^{\nu,\varepsilon}_\lambda(s))\big)-(\Psi_\lambda+\lambda I)\big(J_\eta(X^{\nu,\eta}_\lambda(s))\big)
,J_\varepsilon(X^{\nu,\varepsilon}_\lambda(s))-J_\eta(X^{\nu,\eta}_\lambda(s))\big\rangle_2 ds\nonumber\\
\leq&&\!\!\!\!\!\!\!\!2\mathbb{E}\int_0^T\big|\big\langle(\nu-L)\big((\Psi_\lambda+\lambda I)(J_\varepsilon(X^{\nu,\varepsilon}_\lambda(s)))\big)-(\nu-L)\big((\Psi_\lambda+\lambda I)(J_\eta(X^{\nu,\eta}_\lambda(s)))\big),\nonumber\\
&&~~~~~~~~~~\varepsilon(\nu-L)\big((\Psi_\lambda+\lambda I)(J_\varepsilon(X^{\nu,\varepsilon}_\lambda(s)))\big)-\eta(\nu-L)\big((\Psi_\lambda+\lambda I)(J_\eta(X^{\nu,\eta}_\lambda(s)))\big)\big\rangle_{F^*_{1,2,\nu}}\big|ds\nonumber\\
&&\!\!\!\!\!\!\!\!+\mathbb{E}\int_0^t\big\|B(s,X^{\nu,\varepsilon}_\lambda(s))-B(s,X^{\nu,\eta}_\lambda(s))\big\|^2_{L_2(L^2(\mu),F^*_{1,2,\nu})}ds\nonumber\\
\leq&&\!\!\!\!\!\!\!\!3(\varepsilon+\eta)\mathbb{E}\int_0^T\|(\nu-L)(\Psi_\lambda+\lambda I)(J_\varepsilon(X^{\nu,\varepsilon}_\lambda(s)))\|^2_{F^*_{1,2,\nu}}+\|(\nu-L)(\Psi_\lambda+\lambda I)(J_\eta(X^{\nu,\eta}_\lambda(s)))\|^2_{F^*_{1,2,\nu}}ds\nonumber\\
&&\!\!\!\!\!\!\!\!+\mathbb{E}\int_0^t\big\|B(s,X^{\nu,\varepsilon}_\lambda(s))-B(s,X^{\nu,\eta}_\lambda(s))\big\|^2_{L_2(L^2(\mu),F^*_{1,2,\nu})}ds\nonumber\\
\leq&&\!\!\!\!\!\!\!\!3(\varepsilon+\eta)(\frac{1}{\lambda}+\lambda+C_5)e^{C_3T}|x|^2_2+\mathbb{E}\int_0^tC_1\|X^{\nu,\varepsilon}_\lambda(s)-X^{\nu,\eta}_\lambda(s)\|^2_{F^*_{1,2,\nu}}ds,
\end{eqnarray}
where we used Proposition \ref{Proposition7.1} (see Appendix) and \textbf{(H2)(i)} in the last inequality.
For the second term in the left hand-side of \eqref{4.5},
by using the Burkholder-Davis-Gundy (BDG) inequality for $p=1$, we obtain for all $t\in[0,T]$,
\begin{eqnarray}\label{eqna15}
&&\!\!\!\!\!\!\!\!\mathbb{E}\Big[\sup_{r\in[0,t]}\Big|\int_0^r\big\langle X^{\nu,\varepsilon}_\lambda(s)-X^{\nu,\eta}_\lambda(s),(B(s,X^{\nu,\varepsilon}_\lambda(s))-B(s,X^{\nu,\eta}_\lambda(s)))dW(s)\big\rangle_{F^*_{1,2,\nu}}\Big|\Big]\nonumber\\
\leq&&\!\!\!\!\!\!\!\!\mathbb{E}\Big[\int_0^t\|X^{\nu,\varepsilon}_\lambda(s)-X^{\nu,\eta}_\lambda(s)\|^2_{F^*_{1,2,\nu}}\cdot
C_1\|X^{\nu,\varepsilon}_\lambda(s)-X^{\nu,\eta}_\lambda(s)\|^2_{F^*_{1,2,\nu}}ds\Big]^{\frac{1}{2}}\nonumber\\
\leq&&\!\!\!\!\!\!\!\!\mathbb{E}\Big[\sup_{r\in[0,t]}\|X^{\nu,\varepsilon}_\lambda(r)-X^{\nu,\eta}_\lambda(r)\|^2_{F^*_{1,2,\nu}}\cdot
C_1\int_0^t
\|X^{\nu,\varepsilon}_\lambda(s)-X^{\nu,\eta}_\lambda(s)\|^2_{F^*_{1,2,\nu}}ds\Big]^{\frac{1}{2}}\nonumber\\
\leq&&\!\!\!\!\!\!\!\!\frac{1}{4}\mathbb{E}\sup_{r\in[0,t]}\|X^{\nu,\varepsilon}_\lambda(r)-X^{\nu,\eta}_\lambda(r)\|^2_{F^*_{1,2,\nu}}+C_1\mathbb{E}\int_0^t\|X^{\nu,\varepsilon}_\lambda(s)-X^{\nu,\eta}_\lambda(s)\|^2_{F^*_{1,2,\nu}}ds.
\end{eqnarray}
Substituting \eqref{eqna15} into \eqref{4.5}, we obtain
\begin{eqnarray*}
&&\!\!\!\!\!\!\!\!\frac{1}{2}\mathbb{E}\sup_{r\in[0,t]}\|X^{\nu,\varepsilon}_\lambda(r)-X^{\nu,\eta}_\lambda(r)\|^2_{F^*_{1,2,\nu}}\nonumber\\
&&\!\!\!\!\!\!\!\!+2\mathbb{E}\int_0^t\big\langle(\Psi_\lambda+\lambda I)\big(J_\varepsilon(X^{\nu,\varepsilon}_\lambda(s))\big)-(\Psi_\lambda+\lambda I)\big(J_\eta(X^{\nu,\eta}_\lambda(s))\big)
,J_\varepsilon(X^{\nu,\varepsilon}_\lambda(s))-J_\eta(X^{\nu,\eta}_\lambda(s))\big\rangle_2 ds\nonumber\\
\leq&&\!\!\!\!\!\!\!\!3(\varepsilon+\eta)(\frac{1}{\lambda}+\lambda+C_5)e^{C_3T}|x|^2_2+3C_1\mathbb{E}\int_0^t\sup_{r\in[0,s]}\|X^{\nu,\varepsilon}_\lambda(r)-X^{\nu,\eta}_\lambda(r)\|^2_{F^*_{1,2,\nu}}ds.
\end{eqnarray*}
By Gronwall's lemma, we obtain
\begin{eqnarray}\label{4.6}
&&\!\!\!\!\!\!\!\!\mathbb{E}\sup_{t\in[0,T]}\|X^{\nu,\varepsilon}_\lambda(t)-X^{\nu,\eta}_\lambda(t)\|^2_{F^*_{1,2,\nu}}\nonumber\\
&&\!\!\!\!\!\!\!\!+4\mathbb{E}\int_0^t\big\langle(\Psi_\lambda+\lambda I)\big(J_\varepsilon(X^{\nu,\varepsilon}_\lambda(s))\big)-(\Psi_\lambda+\lambda I)\big(J_\eta(X^{\nu,\eta}_\lambda(s))\big)
,J_\varepsilon(X^{\nu,\varepsilon}_\lambda(s))-J_\eta(X^{\nu,\eta}_\lambda(s))\big\rangle_2 ds\nonumber\\
\leq&&\!\!\!\!\!\!\!\!6(\varepsilon+\eta)(\frac{1}{\lambda}+\lambda+C)e^{(6C_1+C_3)T}|x|_2^2.
\end{eqnarray}
Since by the monotonicity of $\Psi_\lambda$ the second term on the left hand-side of inequality \eqref{4.6} is nonnegative, letting $\varepsilon, \eta\rightarrow0$, we see that $\{X^{\nu,\varepsilon}_\lambda\}$ is Cauchy in $L^2(\Omega;C([0,T];F^*_{1,2}))$.
\end{proof}

From Claim \ref{claim1}, we know there exists
$\widetilde{X}\in L^2(\Omega;C([0,T];F^*_{1,2}))$ such that
\begin{eqnarray}\label{4.7}
\lim_{\varepsilon\rightarrow0}X^{\nu,\varepsilon}_\lambda=\widetilde{X}~~\text{in}~L^2(\Omega;C([0,T];F^*_{1,2})),
\end{eqnarray}


\begin{claim}\label{claim2}
$\widetilde{X}=X^\nu_\lambda$.
\end{claim}
\begin{proof}
We have
\begin{eqnarray}\label{4.71}
    \lim_{\varepsilon\rightarrow0}\int_0^\cdot
    B(s,X^{\nu,\varepsilon}_\lambda(s))dW(s)=\int_0^{\cdot}
    B(s,\widetilde{X}(s))dW(s)~~\text{in}~L^2(\Omega;C([0,T];F^*_{1,2})),
    \end{eqnarray}
since by the BDG inequality for $p=1$ and \textbf{(H2)(i)}, we have
\begin{eqnarray*}
&&\!\!\!\!\!\!\!\!\mathbb{E}\sup_{r\in[0,T]}\Big\|\int_0^r(B(s,X^{\nu,\varepsilon}_\lambda(s))-B(s,\widetilde{X}(s)))dW(s)\Big\|^2_{F^*_{1,2,\nu}}\nonumber\\
\leq&&\!\!\!\!\!\!\!\!C\mathbb{E}\int_0^T\|B(s,X^{\nu,\varepsilon}_\lambda(s))-B(s,\widetilde{X}(s))\|^2_{L_2(L^2(\mu),F^*_{1,2,\nu})}ds\nonumber\\
\leq&&\!\!\!\!\!\!\!\!CT\mathbb{E}\sup_{s\in[0,T]}\|X^{\nu,\varepsilon}_\lambda(s)-\widetilde{X}(s)\|^2_{F^*_{1,2,\nu}}.
\end{eqnarray*}
Next we show that
$(\Psi_\lambda+\lambda)(\widetilde{X})\in
L^2((0,T);L^2(\Omega;F_{1,2}))$ and that \eqref{eq:4} is satisfied.
From \eqref{eqna7} we know that
$\{X^{\nu,\varepsilon}_\lambda\}$ is bounded in
$L^2((0,T)\times\Omega\times E)$ and therefore along a subsequence,
again denoted by $\{\varepsilon\}$, we have
\begin{eqnarray}\label{eq:15}
\lim_{\varepsilon\rightarrow0}X^{\nu,\varepsilon}_\lambda=\widetilde{X}~~\text{weakly
~in}~L^2((0,T)\times\Omega\times E).
\end{eqnarray}
From \eqref{eqnarray18} and \eqref{eqna19}, we know
\begin{eqnarray}\label{eq:13}
&&\!\!\!\!\!\!\!\!\mathbb{E}\int_0^T\|X^{\nu,\varepsilon}_\lambda(s)-J_\varepsilon(X^{\nu,\varepsilon}_\lambda(s))\|^2_{F^*_{1,2,\nu}}ds\nonumber\\
=&&\!\!\!\!\!\!\!\!\varepsilon^2\mathbb{E}\int_0^T\|(\nu-L)(\Psi_\lambda(J_\varepsilon(X^{\nu,\varepsilon}_\lambda(s)))+\lambda
J_\varepsilon(X^{\nu,\varepsilon}_\lambda(s)))\|^2_{F^*_{1,2,\nu}}ds\nonumber\\
\leq&&\!\!\!\!\!\!\!\!
\frac{\varepsilon}{2}(\frac{1}{\lambda}+\lambda+C_5)e^{C_3T}|x|^2_2,
\end{eqnarray}
which yields,
\begin{eqnarray*}
\lim_{\varepsilon\rightarrow0}J_\varepsilon(X^{\nu,\varepsilon}_\lambda)=\widetilde{X}~~\text{in}~L^2((0,T);L^2(\Omega;F^*_{1,2})).
\end{eqnarray*}
Recall from \eqref{eqnarray19} that
\begin{eqnarray*}
|J_\varepsilon(X^{\nu,\varepsilon}_\lambda(t))|_2\leq|X^{\nu,\varepsilon}_\lambda(t)|_2,~~\forall~
t\in [0,T].
\end{eqnarray*}
Therefore, we infer by \eqref{eq:15} and \eqref{eq:13} that
\begin{eqnarray}\label{4.8}
\lim_{\varepsilon\rightarrow0}J_\varepsilon(X^{\nu,\varepsilon}_\lambda)=\widetilde{X},
~\text{weakly~in}~L^2((0,T)\times\Omega\times E).
\end{eqnarray}
By the monotonicity of $\Psi_\lambda$, it follows from \eqref{4.6} that
$J_\varepsilon(X^{\nu,\varepsilon}_\lambda)$, $\varepsilon\in(0,1)$, is Cauchy in $L^2((0,T)\times\Omega\times E)$, so the convergence in \eqref{4.8} is strong and thus
\begin{eqnarray*}
\lim_{\varepsilon\rightarrow0}(\Psi_\lambda+\lambda I)(J_\varepsilon(X^{\nu,\varepsilon}_\lambda))=(\Psi_\lambda+\lambda I)(\widetilde{X})~~\text{in}~L^2((0,T)\times\Omega\times
E),
\end{eqnarray*}
since $\Psi_\lambda+\lambda I$ is Lipschitz.

From \eqref{eqna19}, we know that
$(\nu-L)(\Psi_\lambda+\lambda I)(J_\varepsilon(X^{\nu,\varepsilon}_\lambda))$, $\varepsilon\in(0,1)$, is bounded in $L^2([0,T]\times\Omega;F^*_{1,2})$, so
$(\Psi_\lambda+\lambda I)(J_\varepsilon(X^{\nu,\varepsilon}_\lambda))$
is bounded in $L^2([0,T];L^2(\Omega,F_{1,2}))$. Hence there exists a
subsequence, again denoted by $\{\varepsilon\}$ such that
\begin{eqnarray}\label{4.9}
\lim_{\varepsilon\rightarrow0}(\Psi_\lambda+\lambda I)(J_\varepsilon(X^{\nu,\varepsilon}_\lambda))=(\Psi_\lambda+\lambda I)(\widetilde{X})
\text{~weakly~in}~L^2([0,T]\times\Omega;F_{1,2}).
\end{eqnarray}
It is then easy to see that also
\begin{eqnarray*}
\lim_{\varepsilon\rightarrow0}\int_0^\cdot(\Psi_\lambda+\lambda I)(J_\varepsilon(X^{\nu,\varepsilon}_\lambda(s)))ds=\int_0^\cdot(\Psi_\lambda+\lambda I)(\widetilde{X}(s))ds
\end{eqnarray*}
weakly in $L^2([0,T]\times \Omega;F_{1,2}),$
and thus
\begin{eqnarray*}
\lim_{\varepsilon\rightarrow0}(\nu-L)\int_0^\cdot(\Psi_\lambda+\lambda I)J_\varepsilon(X^{\nu,\varepsilon}_\lambda(s))ds=(\nu-L)\int_0^\cdot(\Psi_\lambda+\lambda I)(\widetilde{X}(s))ds
\end{eqnarray*}
weakly in $L^2([0,T]\times \Omega;F^*_{1,2})$.

Consequently, taking into account \eqref{4.7}, \eqref{4.71}, as $\varepsilon\rightarrow0$, we can pass to the weak limit in $L^2([0,T]\times\Omega;F^*_{1,2})$ in the equation

\begin{eqnarray*}
X^{\nu,\varepsilon}_\lambda(t)=x+(\nu-L)\int_0^t(\Psi_\lambda+\lambda I)(J_\varepsilon(X^{\nu,\varepsilon}_\lambda(s)))ds+\int_0^tB(s,X^{\nu,\varepsilon}_\lambda(s))dW(s),
\end{eqnarray*}
and since each term is a $\Bbb{P}$-a.s. continuous path in $F^*_{1,2}$, we conclude that $\widetilde{X}$ is a strong solution to \eqref{eq:4} in the sense of the definition in Lemma 3.1 of \cite{RWX}. Furthermore, by the uniqueness part of \cite[Lemma 3.1]{RWX}, it follows that $X^\nu_\lambda=\widetilde{X}$ a.e. in $[0,T]\times\Omega\times E$.
\end{proof}

It remains to prove \eqref{4.4}. By \eqref{eqna7} and \eqref{eqna11}, we know that for fixed $\nu$ and $\lambda$, there exists a subsequence denoted as $\{X^{\nu,\varepsilon_n}_\lambda\}$, $0<\varepsilon_n<1$ and an element $Y^\nu_\lambda\in L^\infty([0,T];(L^2\cap L^{2m})(\Omega\times E))$ such that as $\varepsilon_n\rightarrow0$,
$$X^{\nu,\varepsilon_n}_\lambda\longrightarrow Y^\nu_\lambda~~\text{weakly~in}~L^\infty([0,T];(L^2\cap L^{2m})(\Omega\times E)),$$
hence also in $L^2([0,T]\times \Omega;L^2(\mu))$  and in $L^2([0,T]\times \Omega;F^*_{1,2})$,
but $\{X^{\nu,\varepsilon_n}_\lambda\}$ is Cauchy in $L^2(\Omega;C([0,T];F^*_{1,2}))$ by Claim \ref{claim1}, therefore we have $Y^\nu_\lambda=X^\nu_\lambda$ a.e. in $[0,T]\times\Omega\times E$. Note that the above weak convergence also holds in $L^{\infty}([0,T]; L^{2m}(\Omega\times E))$, hence by the (sequentially) weakly lower-semicontinuity of norms, letting $\varepsilon\rightarrow0$ in \eqref{eqna11}, we can get \eqref{4.4}. This completes the proof of Lemma \ref{lemma6}.

\end{proof}


\begin{remark}\label{remark2}
By Lemma \ref{lemma6} we know that
\begin{eqnarray*}
X^\nu_\lambda(t)=&&\!\!\!\!\!\!\!\!x+(\nu-L)\int_0^t\big(\Psi_\lambda(X^\nu_\lambda(s))+\lambda X^\nu_\lambda(s)\big)ds+\int_0^tB(s,X^\nu_\lambda(s))dW(s),~t\in[0,T].
\end{eqnarray*}
But, since $X^\nu_\lambda=\widetilde{X}$, by \eqref{4.9} we may interchange $(\nu-L)$ with the integral w.r.t. $ds$.
\end{remark}

\vspace{2mm}
Let us now continue to prove Theorem \ref{Theorem}. Choose
$0<\nu\leq\nu_0\leq1$, rewrite \eqref{eq:4} as
\begin{eqnarray*}
&&\!\!\!\!\!\!\!\!dX^\nu_\lambda(t)+(\nu_0-L)(\Psi_\lambda(X^\nu_\lambda(t))+\lambda
X^\nu_\lambda(t))dt\nonumber\\
=&&\!\!\!\!\!\!\!\!(\nu_0-\nu)(\Psi_\lambda(X^\nu_\lambda(t))+\lambda
X^\nu_\lambda(t))dt+B(t,X^\nu_\lambda(t))dW(t).
\end{eqnarray*}
Now by Remark \ref{remark2} we may apply It\^o's formula (\cite[Theorem 4.2.5]{LR} with $V:=L^2(\mu)$, $H:=F^*_{1,2,\nu_0}$) to $\|X^\nu_\lambda-X^{\nu'}_\lambda\|^2_{F^*_{1,2,\nu_0}}$, $\nu,~ \nu'\in(0,\nu_0]$, to obtain for all $t\in[0,T]$, $\lambda\in(0,1)$,
\begin{eqnarray}\label{eqnarray57}
&&\!\!\!\!\!\!\!\!\|X^\nu_\lambda(t)-X^{\nu'}_\lambda(t)\|^2_{F^*_{1,2,\nu_0}}\nonumber\\
&&\!\!\!\!\!\!\!\!+2\int_0^t\int_E\big(\Psi_\lambda(X^\nu_\lambda(s))+\lambda
X^\nu_\lambda(s)-\Psi_\lambda(X^{\nu'}_\lambda(s))-\lambda
X^{\nu'}_\lambda(s)\big)\cdot(X^\nu_\lambda(s)-X^{\nu'}_\lambda(s))d\mu
ds\nonumber\\
=&&\!\!\!\!\!\!\!\!2\int_0^t\big\langle\nu_0\big(\Psi_\lambda(X^\nu_\lambda(s))-\Psi_\lambda(X^{\nu'}_\lambda(s))+\lambda
X^\nu_\lambda(s)-\lambda
X^{\nu'}_\lambda(s)\big),X^\nu_\lambda(s)-X^{\nu'}_\lambda(s)\big\rangle_{F^*_{1,2,\nu_0}}ds\nonumber\\
&&\!\!\!\!\!\!\!\!+2\nu'\int_0^t\big\langle\Psi_\lambda(X^{\nu'}_\lambda(s))+\lambda
X^{\nu'}_\lambda(s),X^\nu_\lambda(s)-X^{\nu'}_\lambda(s)\big\rangle_{F^*_{1,2,\nu_0}}ds\nonumber\\
&&\!\!\!\!\!\!\!\!-2\nu\int_0^t\big\langle\Psi_\lambda(X^\nu_\lambda(s))+\lambda
X^\nu_\lambda(s),X^\nu_\lambda(s)-X^{\nu'}_\lambda(s)\big\rangle_{F^*_{1,2,\nu_0}}ds\nonumber\\
&&\!\!\!\!\!\!\!\!+\int_0^t\|B(s,X^\nu_\lambda(s))-B(s,X^{\nu'}_\lambda(s))\|^2_{L_2(L^2(\mu),F^*_{1,2,\nu_0})}ds\nonumber\\
&&\!\!\!\!\!\!\!\!+2\int_0^t\big\langle
X^\nu_\lambda(s)-X^{\nu'}_\lambda(s),\big(B(s,X^\nu_\lambda(s))-B(s,X^{\nu'}_\lambda(s))\big)dW(s)\big\rangle_{F^*_{1,2,\nu_0}}.
\end{eqnarray}
Since $\Psi_\lambda$ is $\frac{1}{\lambda}$-Lipschitz, we have $\Psi_\lambda(r)-\Psi_\lambda(r')(r-r')\geq\lambda|\Psi_\lambda(r)-\Psi_\lambda(r')|^2$, $\forall r, r'\in\Bbb{R}$, then
\begin{eqnarray}\label{4.1}
&&\!\!\!\!\!\!\!\!2\int_0^t\int_E\big(\Psi_\lambda(X^\nu_\lambda(s))+\lambda
X^\nu_\lambda(s)-\Psi_\lambda(X^{\nu'}_\lambda(s))-\lambda
X^{\nu'}_\lambda(s)\big)\cdot(X^\nu_\lambda(s)-X^{\nu'}_\lambda(s))ds\nonumber\\
\geq&&\!\!\!\!\!\!\!\!2\lambda\int_0^t|X^\nu_\lambda(s)-X^{\nu'}_\lambda(s)|_2^2 ds+2\lambda\int_0^t|\Psi_\lambda(X^\nu_\lambda(s))-\Psi_\lambda(X^{\nu'}_\lambda(s))|_2^2ds.
\end{eqnarray}
Since $L^2(\mu)\subset F^*_{1,2}$ continuously, by \eqref{eqnal19} we have $\|u\|_{F^*_{1,2,\nu_0}}\leq\frac{1}{\sqrt{\nu_0}}|u|_2$, $\forall u\in L^2(\mu)$, by Minkowski's inequality and Young's inequality, we get for all $t\in[0,T]$
\begin{eqnarray}\label{4.2}
&&\!\!\!\!\!\!\!\!2\int_0^t\big\langle\nu_0\big(\Psi_\lambda(X^\nu_\lambda(s))-\Psi_\lambda(X^{\nu'}_\lambda(s))+\lambda
X^\nu_\lambda(s)-\lambda
X^{\nu'}_\lambda(s)\big),X^\nu_\lambda(s)-X^{\nu'}_\lambda(s)\big\rangle_{F^*_{1,2,\nu_0}}ds\nonumber\\
\leq&&\!\!\!\!\!\!\!\!2\nu_0\int_0^t\|\Psi_\lambda(X^\nu_\lambda(s))-\Psi_\lambda(X^{\nu'}_\lambda(s))\|_{F^*_{1,2,\nu_0}}\cdot\|X^\nu_\lambda(s)-X^{\nu'}_\lambda(s)\|_{F^*_{1,2,\nu_0}}ds\nonumber\\
&&\!\!\!\!\!\!\!\!+2\nu_0\lambda\int_0^t\|X^\nu_\lambda(s)-X^{\nu'}_\lambda(s)\|^2_{F^*_{1,2,\nu_0}}ds\nonumber\\
\leq&&\!\!\!\!\!\!\!\!2\lambda\int_0^t|\Psi_\lambda(X^\nu_\lambda(s))-\Psi_\lambda(X^{\nu'}_\lambda(s))|_2^2ds+\frac{\nu_0}{2\lambda}\int_0^t\|X^\nu_\lambda(s)-X^{\nu'}_\lambda(s)\|^2_{F^*_{1,2,\nu_0}}ds\nonumber\\
&&\!\!\!\!\!\!\!\!+2\nu_0\lambda\int_0^t\|X^\nu_\lambda(s)-X^{\nu'}_\lambda(s)\|^2_{F^*_{1,2,\nu_0}}ds.
\end{eqnarray}
Using similar arguments as above and the fact that, by \textbf{(H1)} and \eqref{eqnal01}, $|\Psi_\lambda(r)|\leq C(|r|^m+1_{\{\mu(E)<\infty\}})$, $\forall r\in\Bbb{R}$ with $C$ independent of $\lambda$, we have for all $t\in[0,T]$
\begin{eqnarray}\label{4.3}
&&\!\!\!\!\!\!\!\!2\nu'\int_0^t\big\langle\Psi_\lambda(X^{\nu'}_\lambda(s))+\lambda
X^{\nu'}_\lambda(s),X^\nu_\lambda(s)-X^{\nu'}_\lambda(s)\big\rangle_{F^*_{1,2,\nu_0}}ds\nonumber\\
&&\!\!\!\!\!\!\!\!-2\nu\int_0^t\big\langle\Psi_\lambda(X^\nu_\lambda(s))+\lambda
X^\nu_\lambda(s),X^\nu_\lambda(s)-X^{\nu'}_\lambda(s)\big\rangle_{F^*_{1,2,\nu_0}}ds\nonumber\\
\leq&&\!\!\!\!\!\!\!\!\frac{2\nu'}{\nu_0}\int_0^t|\Psi_\lambda(X^{\nu'}_\lambda(s))|_2^2+\lambda|X^{\nu'}_\lambda(s)|_2^2ds+\int_0^t\|X^\nu_\lambda(s)-X^{\nu'}_\lambda(s)\|^2_{F^*_{1,2,\nu_0}}ds\nonumber\\
&&\!\!\!\!\!\!\!\!+\frac{2\nu}{\nu_0}\int_0^t|\Psi_\lambda(X^{\nu}_\lambda(s))|_2^2+\lambda|X^{\nu}_\lambda(s)|_2^2ds+\int_0^t\|X^\nu_\lambda(s)-X^{\nu'}_\lambda(s)\|^2_{F^*_{1,2,\nu_0}}ds\nonumber\\
\leq&&\!\!\!\!\!\!\!\!\frac{2\nu'}{\nu_0}\int_0^tC\big(|X^{\nu'}_\lambda(s)|_{2m}^{2m}+\mu(E)\cdot1_{\{\mu(E)<\infty\}}\big)+\lambda|X^{\nu'}_\lambda(s)|_{2}^{2}ds\nonumber\\
&&\!\!\!\!\!\!\!\!+\frac{2\nu}{\nu_0}\int_0^tC\big(|X^{\nu}_\lambda(s)|_{2m}^{2m}+\mu(E)\cdot1_{\{\mu(E)<\infty\}}\big)+\lambda|X^{\nu}_\lambda(s)|_{2}^{2}ds\nonumber\\
&&\!\!\!\!\!\!\!\!+2\int_0^t\|X^\nu_\lambda(s)-X^{\nu'}_\lambda(s)\|^2_{F^*_{1,2,\nu_0}}ds\nonumber\\
\leq&&\!\!\!\!\!\!\!\!\frac{2(\nu+\nu')(C+\lambda)}{\nu_0}\int_0^t|X^{\nu'}_\lambda(s)|_{2m}^{2m}+|X^{\nu}_\lambda(s)|_{2m}^{2m}+|X^{\nu'}_\lambda(s)|_{2}^{2}+|X^{\nu}_\lambda(s)|_{2}^{2}+\mu(E)\cdot1_{\{\mu(E)<\infty\}}ds\nonumber\\
&&\!\!\!\!\!\!\!\!+2\int_0^t\|X^\nu_\lambda(s)-X^{\nu'}_\lambda(s)\|^2_{F^*_{1,2,\nu_0}}ds.
\end{eqnarray}
Taking expectation to both sides of \eqref{eqnarray57}, by the BDG inequality for $p=1$, taking \textbf{(H2)(i)}, \eqref{4.1}-\eqref{4.3}, \eqref{eqna11} into account, we obtain for all $t\in[0,T]$
\begin{eqnarray*}
&&\!\!\!\!\!\!\!\!\frac{1}{2}\mathbb{E}\Big[\sup_{s\in[0,t]}\|X^\nu_\lambda(s)-X^{\nu'}_\lambda(s)\|^2_{F^*_{1,2,\nu_0}}\Big]+2\lambda\Bbb{E}\int_0^t
|X^\nu_\lambda(s)-X^{\nu'}_\lambda(s)|_2^2ds\nonumber\\
\leq&&\!\!\!\!\!\!\!\!\frac{(C+\lambda)(\nu+\nu')C_T}{\nu_0}\big(|x|_{2m}^{2m}+|x|_2^2+\mu(E)\cdot1_{\{\mu(E)<\infty\}}\big)\nonumber\\
&&\!\!\!\!\!\!\!\!+\Big(\frac{\nu_0}{2\lambda}+2\nu_0\lambda+C\Big)\mathbb{E}\int_0^t\sup_{r\in[0,s]}\|X^\nu_\lambda(r)-X^{\nu'}_\lambda(r)\|^2_{F^*_{1,2,\nu_0}}ds.
\end{eqnarray*}
Hence by Gronwall's lemma, we have for some $C_{T,\lambda}\in(0,\infty)$
\begin{eqnarray}\label{eqnarray57.2}
&&\!\!\!\!\!\!\!\!\mathbb{E}\Big[\sup_{s\in[0,T]}\|X^\nu_\lambda(s)-X^{\nu'}_\lambda(s)\|^2_{F^*_{1,2,\nu_0}}\Big]+2\lambda\Bbb{E}\int_0^T|X^\nu_\lambda(s)-X^{\nu'}_\lambda(s)|^2_2ds\nonumber\\
\leq&&\!\!\!\!\!\!\!\!\frac{C_{T,\lambda}(\nu+\nu')}{\nu_0}\big(|x|_{2m}^{2m}+|x|_2^2+\mu(E)\cdot1_{\{\mu(E)<\infty\}}\big),~~\forall~\nu,\nu'\in(0,\nu_0].
\end{eqnarray}

Hence there exists
an $(\mathscr{F}_t)_{t\geq0}$-adapted process
\begin{eqnarray}\label{space}
X_\lambda\in
L^2(\Omega;C([0,T];F^*_{1,2}))\cap L^2((0,T)\times\Omega\times E),
\end{eqnarray}
 such that
\begin{eqnarray}\label{eq:4.46}
\lim_{\nu\rightarrow0}\Big\{\mathbb{E}\Big[\sup_{s\in[0,T]}\|X^\nu_\lambda(s)-X_\lambda(s)\|^2_{F^*_{1,2,\nu_0}}\Big]+2\lambda\Bbb{E}\int_0^T|X^\nu_\lambda(s)-X_\lambda(s)|_2^2ds\Big\}=0.
\end{eqnarray}
Consequently, by \textbf{(H2)(i)} we can pass to the limit with $\nu\rightarrow0$ in \eqref{eq:4} to obtain
\begin{eqnarray}\label{4.48}
X_\lambda(t)=&&\!\!\!\!\!\!\!\!x-\lim_{\nu\rightarrow0}(\nu-L)\int_0^t\Psi_\lambda(X^\nu_\lambda(s))+\lambda X^\nu_\lambda(s)ds\nonumber\\
&&\!\!\!\!\!\!\!\!+\int_0^tB(s,X_\lambda(s))dW(s), ~~t\in[0,T],
\end{eqnarray}
where the limit exists in $L^2(\Omega;C([0,T];F^*_{1,2}))$. Furthermore, it follows by \eqref{eq:4.46}, since $\Psi_\lambda$ is Lipschitz, that
\begin{eqnarray}\label{4.49}
\lim_{\nu\rightarrow0}\int_0^\cdot\Psi_\lambda(X^\nu_\lambda(s))+\lambda X^\nu_\lambda(s)ds=\int_0^\cdot\Psi_\lambda(X_\lambda(s))+\lambda X_\lambda(s)ds,
\end{eqnarray}
in $L^2(\Omega;C([0,T];L^2(\mu)))$, hence in $L^2(\Omega;C([0,T];F^*_{1,2}))$.
\vspace{2mm}
Writing $\nu-L=(1-L)+(\nu-1)I$, \eqref{4.48} and \eqref{4.49} imply that the convergence in \eqref{4.49} holds even in $L^2(\Omega;C([0,T];F_{1,2}))$ and that the second term on the right hand-side of \eqref{4.48} is equal to
\begin{eqnarray*}
L\int_0^t\Psi_\lambda(X_\lambda(s))+\lambda X_\lambda(s)ds,
\end{eqnarray*}
which together with \eqref{space} show that $X_\lambda$ is a solution of \eqref{equ:2.3} in the sense of Definition 3.1 in \cite{RWX} with state space $F^*_{1,2}$.

Now let us prove that, since $x\in\mathscr{F}^*_{e}(\subset F^*_{1,2})$, which so far we have not used, that $X_\lambda$ is indeed a solution of \eqref{equ:2.3} on the smaller state space $\mathscr{F}_e^*$ and that \eqref{eqnarray12}-\eqref{eqnarray14} hold. Note that \eqref{equ:2.2} trivially holds, since the convergence in \eqref{4.49} is in $L^2(\Omega;C([0,T];F_{1,2}))$ and since $F_{1,2}\subset\mathscr{F}_e$ continuously.

To prove \eqref{eqnarray12}, we observe that by \eqref{eq:4.46} it follows that as $\nu\rightarrow0$, $X^\nu_\lambda\rightarrow X_\lambda$ in $dt\otimes \Bbb{P}\otimes d\mu$-measure. Hence we have by Fatou's lemma and \eqref{4.4} for all $\varphi\in L^1([0,T];\Bbb{R})$
\begin{eqnarray*}
\int_0^T|\varphi(t)|\Bbb{E}|X_\lambda(t)|_{2m}^{2m}dt&&\!\!\!\!\!\!\!\!\leq\lim_{\nu\rightarrow0}\inf\int_0^T|\varphi(t)|\Bbb{E}|X^\nu_\lambda(t)|_{2m}^{2m}dt\nonumber\\
&&\!\!\!\!\!\!\!\!\leq|\varphi|_{L^1([0,T];\Bbb{R})}C|x|_{2m}^{2m},
\end{eqnarray*}
which implies \eqref{eqnarray12}. Now \eqref{eqnarray13} follows by \textbf{(H1)}.

To prove \eqref{eqnarray15}, applying It\^{o}'s formula (\cite[Theorem 4.2.5]{LR} with $V:=L^2(\mu)$, $H:=F^*_{1,2,\nu_0}$) to $\|X^\nu_\lambda(t)\|^2_{F^*_{1,2,\nu_0}}$, for all $t\in[0,T]$, we have
\begin{eqnarray}\label{xnulambda}
&&\!\!\!\!\!\!\!\!\|X^\nu_\lambda(t)\|^2_{F^*_{1,2,\nu_0}}+2\int_0^t\int_E\big(\Psi_\lambda(X^\nu_\lambda(s))+\lambda X^\nu_\lambda(s)\big)\cdot X^\nu_\lambda(s)d\mu ds\nonumber\\
=&&\!\!\!\!\!\!\!\!\|x\|^2_{F^*_{1,2,\nu_0}}+2\int_0^t\big\langle(\nu_0-\nu)\big(\Psi_\lambda(X^\nu_\lambda(s))+\lambda X^\nu_\lambda(s)\big),X^\nu_\lambda(s)\big\rangle_{F^*_{1,2,\nu_0}}ds\nonumber\\
&&\!\!\!\!\!\!\!\!+2\int_0^t\big\langle X^\nu_\lambda(s),B(s,X^\nu_\lambda(s))dW(s)\big\rangle_{F^*_{1,2,\nu_0}}+\int_0^t\|B(s,X^\nu_\lambda(s))\|_{L_2(L^2(\mu),F^*_{1,2,\nu_0})}.
\end{eqnarray}
Since $\Psi_\lambda$ is monotone,
\begin{eqnarray}\label{Psilambda}
2\int_0^t\int_E\big(\Psi_\lambda(X^\nu_\lambda(s))+\lambda X^\nu_\lambda(s)\big)\cdot X^\nu_\lambda(s)d\mu ds\geq2\lambda\int_0^t|X^\nu_\lambda(s)|_2^2ds.
\end{eqnarray}
Since $\|u\|_{F^*_{1,2,\nu_0}}\leq\frac{1}{\sqrt{\nu_0}}|u|_2$, $\forall u\in L^2(\mu)$, by Minkowski's inequality, Young's inequality and $|\Psi_\lambda(r)|\leq C(|r|^m+1_{\{\mu(E)<\infty\}})$, $\forall r\in\Bbb{R}$ with $C$ independent of $\lambda$, we get for all $t\in[0,T]$, $\lambda\in(0,1)$, $\nu\in(0,\nu_0]$ with $\nu_0\in(0,1]$ that
\begin{eqnarray}\label{Psi}
&&\!\!\!\!\!\!\!\!2\int_0^t\big\langle(\nu_0-\nu)\big(\Psi_\lambda(X^\nu_\lambda(s))+\lambda X^\nu_\lambda(s)\big),X^\nu_\lambda(s)\big\rangle_{F^*_{1,2,\nu_0}}ds\nonumber\\
\leq&&\!\!\!\!\!\!\!\!2\int_0^t\frac{\nu_0-\nu}{\sqrt{\nu_0}}|\Psi_\lambda(X^\nu_\lambda(s))+\lambda X^\nu_\lambda(s)|_2\cdot\|X^\nu_\lambda(s)\|_{F^*_{1,2,\nu_0}}ds\nonumber\\
\leq&&\!\!\!\!\!\!\!\!2\int_0^t\sqrt{\nu_0}\big(|\Psi_\lambda(X^\nu_\lambda(s))|_2+\lambda|X^\nu_\lambda(s)|_2\big)\cdot\|X^\nu_\lambda(s)\|_{F^*_{1,2,\nu_0}}ds\nonumber\\
\leq&&\!\!\!\!\!\!\!\!2\int_0^t|\Psi_\lambda(X^\nu_\lambda(s))|^2_2+|X^\nu_\lambda(s)|^2_2ds+\int_0^t\|X^\nu_\lambda(s)\|^2_{F^*_{1,2,\nu_0}}ds\nonumber\\
\leq&&\!\!\!\!\!\!\!\!C\int_0^t|X^\nu_\lambda(s)|_{2m}^{2m}+|X^\nu_\lambda(s)|_2^2+\mu(E)\cdot1_{\{\mu(E)<\infty\}}ds+\int_0^t\|X^\nu_\lambda(s)\|^2_{F^*_{1,2,\nu_0}}ds.
\end{eqnarray}
Taking expectation to both sides of \eqref{xnulambda}, and taking \eqref{Psilambda} and \eqref{Psi} into \eqref{xnulambda}, by exactly the same arguments as in the proof of \eqref{eqnarray57.2}, except for using \textbf{(H2)(ii)} instead of \textbf{(H2)(i)}, we obtain
\begin{eqnarray*}\label{4.50}
&&\!\!\!\!\!\!\!\!\Bbb{E}\Big[\sup_{s\in[0,T]}\|X^\nu_\lambda(s)\|^2_{F^*_{1,2,\nu_0}}\Big]+\lambda\Bbb{E}\int_0^T|X^\nu_\lambda(s)|_2^2ds\nonumber\\
\leq&&\!\!\!\!\!\!\!\!C_T\big(\|x\|^2_{F^*_{1,2,\nu_0}}+|x|_{2m}^{2m}+|x|_2^2+\mu(E)\cdot1_{\{\mu(E)<\infty\}}\big),~~\forall~\lambda\in(0,1),~\nu\in(0,\nu_0].
\end{eqnarray*}
Hence we get by Fatou's lemma
\begin{eqnarray}\label{4.51}
&&\!\!\!\!\!\!\!\!\mathbb{E}\Big[\sup_{t\in[0,T]}\|X_\lambda(t)\|^2_{F^*_{1,2,\nu_0}}\Big]+\lambda\mathbb{E}\int_0^T|X_\lambda(s)|^2_2ds\nonumber\\
\leq&&\!\!\!\!\!\!\!\!C_T\big(\|x\|^2_{F^*_{1,2,\nu_0}}+|x|_{2m}^{2m}+|x|_2^2+\mu(E)\cdot1_{\{\mu(E)<\infty\}}\big),~~\forall~\lambda\in(0,1).
\end{eqnarray}
Letting $\nu_0\rightarrow0$ and taking \eqref{eqnal6} into account,
we get
\begin{eqnarray*}\label{eqnal7}
&&\!\!\!\!\!\!\!\!\mathbb{E}\Big[\sup_{t\in[0,T]}\|X_\lambda(t)\|^2_{\mathscr{F}^*_e}\Big]+\lambda
\mathbb{E}\int_0^T|X_\lambda(s)|_2^2ds\nonumber\\
\leq&&\!\!\!\!\!\!\!\!C_T\big(\|x\|^2_{\mathscr{F}^*_e}+|x|^{2m}_{2m}+|x|_2^2+\mu(E)\cdot1_{\{\mu(E)<\infty\}}\big),\ \forall~ \lambda\in(0,1),
\end{eqnarray*}
hence \eqref{eqnarray15} follows.

\vspace{2mm}
Now let us prove that $X_\lambda$ is a solution to \eqref{eq:3} with state space $\mathscr{F}_e^*$. By \eqref{equ:2.2} and Lemma \ref{rrw}, we have
\begin{eqnarray*}
L\int_0^\cdot\Psi_\lambda(X_\lambda(s))+\lambda X_\lambda(s)ds\in L^2(\Omega;C([0,T];\mathscr{F}_e^*)).
\end{eqnarray*}
Furthermore, letting $\nu\rightarrow0$ in \textbf{(H2)(ii)}, we conclude from \eqref{4.51} that the stochastic integral in \eqref{equ:2.3} is in $L^2(\Omega;C([0,T];\mathscr{F}_e^*))$ as well. Since $x\in\mathscr{F}_e^*$, \eqref{equ:2.3} (which holds in $F^*_{1,2}$) implies that $X_\lambda\in L^2(\Omega;C([0,T];\mathscr{F}^*_e))$. So, altogether this implies that $X_\lambda$ is a strong solution of \eqref{eq:3} with state space $\mathscr{F}_e^*$ in the sense of \eqref{eqnarray11}-\eqref{equ:2.3}.

\vspace{2mm}
Now finally we prove \eqref{eqnarray14}. Firstly, we have
\begin{eqnarray*}
  &&\!\!\!\!\!\!\!\!d(X^\nu_\lambda(t)-X^\nu_{\lambda'}(t))+(\nu_0-L)\big(\Psi_\lambda(X^\nu_\lambda(t))-\Psi_{\lambda'}(X^\nu_{\lambda'}(t))+\lambda X^\nu_\lambda(t)-\lambda'
X^\nu_{\lambda'}(t))\big)dt\\
&&\!\!\!\!\!\!\!\!+(\nu-\nu_0)\big(\Psi_\lambda(X^\nu_\lambda(t))-\Psi_{\lambda'}(X^\nu_{\lambda'}(t))+\lambda
X^\nu_\lambda(t)-\lambda'
X^\nu_{\lambda'}(t))\big)dt\\
=&&\!\!\!\!\!\!\!\!\big(B(t,X^\nu_\lambda(t))-B(t,X^\nu_{\lambda'}(t))\big)dW(t).
\end{eqnarray*}
By Remark \ref{remark2} we may apply It\^{o}'s formula (\cite[Theorem 4.2.5]{LR} with $V:=L^2(\mu)$, $H:=F^*_{1,2,\nu_0}$) to
$\frac{1}{2}\|X^\nu_\lambda-X^\nu_{\lambda'}\|^2_{F^*_{1,2,\nu_0}}$, to obtain for $\nu\in(0,\nu_0]$, $t\in[0,T]$,
\begin{eqnarray}\label{eqnal8}
&&\!\!\!\!\!\!\!\!\frac{1}{2}\|X^\nu_\lambda(t)-X^\nu_{\lambda'}(t)\|^2_{F^*_{1,2,\nu_0}}\nonumber\\
&&\!\!\!\!\!\!\!\!+\int_0^t\int_E\big(\Psi_\lambda(X^\nu_\lambda(s))+\lambda
X^\nu_\lambda(s)-\Psi_{\lambda'}(X^\nu_{\lambda'}(s))-\lambda'
X^\nu_{\lambda'}(s)\big)\cdot(X^\nu_\lambda(s)-X^\nu_{\lambda'}(s))d\mu ds\nonumber\\
&&\!\!\!\!\!\!\!\!+(\nu-\nu_0)\int_0^t\big\langle\Psi_\lambda(X^\nu_\lambda(s))+\lambda
X^\nu_\lambda(s)-\Psi_{\lambda'}(X^\nu_{\lambda'}(s))-\lambda'
X^\nu_{\lambda'}(s),X^\nu_\lambda(s)-X^\nu_{\lambda'}(s)\big\rangle_{F^*_{1,2,\nu_0}}ds\nonumber\\
=&&\!\!\!\!\!\!\!\!\frac{1}{2}\int_0^t\big\|B(s,X^\nu_\lambda(s))-B(s,X^\nu_{\lambda'}(s))\big\|^2_{L_2(L^2(\mu),F^*_{1,2,\nu_0})}ds\nonumber\\
&&\!\!\!\!\!\!\!\!+\int_0^t \big\langle
X^\nu_\lambda(s)-X^\nu_{\lambda'}(s),
(B(s,X^\nu_\lambda(s))-B(s,X^\nu_{\lambda'}(s)))dW(s)\big\rangle_{F^*_{1,2,\nu_0}}.
\end{eqnarray}
Since
$r=\lambda\Psi_\lambda(r)+(I+\lambda\Psi)^{-1}(r)$, for all $r\in \mathbb{R}$, we have for all $r'\in\Bbb{R}$
\begin{eqnarray}\label{eqnal10}
  (\Psi_\lambda(r)-\Psi_{\lambda'}(r'))(r-r')=&&\!\!\!\!\!\!\!\!\big[\Psi_\lambda(r)-\Psi_{\lambda'}(r')\big]\cdot\big[(I+\lambda\Psi)^{-1}(r)-(I+\lambda'\Psi)^{-1}(r')\big]\nonumber\\
  &&\!\!\!\!\!\!\!\!+\big[\Psi_\lambda(r)-\Psi_{\lambda'}(r')\big]\cdot\big[\lambda\Psi_\lambda(r)-\lambda'\Psi_{\lambda'}(r')\big].
\end{eqnarray}
Note that the first summand in the right hand-side is nonnegative
since $\Psi$ is maximal monotone and since $\Psi_\lambda(r)\in
\Psi((I+\lambda\Psi)^{-1}(r))$(see \cite[page:61]{B}). Plugging \eqref{eqnal10} into
\eqref{eqnal8}, and using that $\|\cdot\|_{F^*_{1,2,\nu_0}}\leq\frac{1}{\sqrt{\nu_0}}|\cdot|_2$ and \textbf{(H2)(i)},
we obtain for $\nu\in(0,\nu_0]$, $t\in[0,T]$
\begin{eqnarray*}
  &&\!\!\!\!\!\!\!\!\frac{1}{2}\|X^\nu_\lambda(t)-X^\nu_{\lambda'}(t)\|^2_{F^*_{1,2,\nu_0}}\nonumber\\
  &&\!\!\!\!\!\!\!\!+\int_0^t\int_E\big(\Psi_\lambda(X^\nu_\lambda(s))-\Psi_{\lambda'}(X^\nu_{\lambda'}(s))\big)\cdot\big(\lambda\Psi_\lambda(X^\nu_{\lambda}(s))-\lambda'\Psi_{\lambda'}(X^\nu_{\lambda'}(s))\big)d\mu ds\nonumber\\
  &&\!\!\!\!\!\!\!\!+\int_0^t\int_E\big(\lambda X^\nu_\lambda(s)-\lambda'X^\nu_{\lambda'}(s)\big)\cdot\big(X^\nu_\lambda(s)-X^\nu_{\lambda'}(s)\big)d\mu ds\nonumber\\
 \leq &&\!\!\!\!\!\!\!\!\frac{(\nu_0-\nu)}{\sqrt{\nu_0}}\int_0^t\big|\Psi_\lambda(X^\nu_\lambda(s))+\lambda
X^\nu_\lambda(s)-\Psi_{\lambda'}(X^\nu_{\lambda'}(s))-\lambda'
X^\nu_{\lambda'}(s)\big|_2\cdot\big\|X^\nu_\lambda(s)-X^\nu_{\lambda'}(s)\big\|_{F^*_{1,2,\nu_0}}ds\nonumber\\
&&\!\!\!\!\!\!\!\!+\frac{C_1}{2}\int_0^t\|X^\nu_\lambda(s)-X^\nu_{\lambda'}(s)\|^2_{F^*_{1,2,\nu_0}}ds\nonumber\\
&&\!\!\!\!\!\!\!\!+\int_0^t \big\langle
X^\nu_\lambda(s)-X^\nu_{\lambda'}(s),
(B(s,X^\nu_\lambda(s))-B(s,X^\nu_{\lambda'}(s)))dW(s)\big\rangle_{F^*_{1,2,\nu_0}}.
\end{eqnarray*}
By the BDG inequality for $p=1$ we get for all
$\lambda, ~\lambda'>0$, $t\in [0,T]$
\begin{eqnarray*}
  &&\!\!\!\!\!\!\!\!\frac{1}{4}\mathbb{E}\Big[\sup_{s\in [0,t]}\|X^\nu_\lambda(s)-X^\nu_{\lambda'}(s)\|^2_{F^*_{1,2,\nu_0}}\Big]\nonumber\\
\leq&&\!\!\!\!\!\!\!\!
C(\lambda+\lambda'+\nu_0)\mathbb{E}\int_0^t\big(|\Psi_\lambda(X^\nu_\lambda(s))|_2^2+|\Psi_{\lambda'}(X^\nu_{\lambda'}(s))|_2^2+|X^\nu_\lambda(s)|_2^2+|X^\nu_{\lambda'}(s)|_2^2\big)
  ds\nonumber\\
  &&\!\!\!\!\!\!\!\!+C\mathbb{E}\int_0^t\sup_{r\in[0,s]}\|X^\nu_\lambda(r)-X^\nu_{\lambda'}(r)\|^2_{F^*_{1,2,\nu_0}}ds.
\end{eqnarray*}
Hence by \textbf{(H1)}, \eqref{4.4} and Gronwall's lemma, there exists $C_T\in(0,\infty)$ independent of $\nu_0$, such that for all $\nu\in(0,\nu_0]$, $\lambda, \lambda'\in(0,1)$,
\begin{eqnarray*}\label{eqnarray63}
&&\!\!\!\!\!\!\!\!\mathbb{E}\Big[\sup_{t\in
[0,T]}\|X^\nu_\lambda(t)-X^\nu_{\lambda'}(t)\|^2_{F^*_{1,2,\nu_0}}\Big]\nonumber\\
\leq&&\!\!\!\!\!\!\!\!
C_T(\lambda+\lambda'+\nu_0)(|x|_2^2+|x|_{2m}^{2m}+\mu(E)\cdot1_{\{\mu(E)<\infty\}}).
\end{eqnarray*}
Then letting
$\nu\rightarrow0$, we obtain
\begin{eqnarray}\label{eqnarray63.1}
&&\!\!\!\!\!\!\!\!\mathbb{E}\Big[\sup_{t\in
[0,T]}\|X_\lambda(t)-X_{\lambda'}(t)\|^2_{F^*_{1,2,\nu_0}}\Big]\nonumber\\
\leq&&\!\!\!\!\!\!\!\!C_T(\lambda+\lambda'+\nu_0)(|x|_2^2+|x|_{2m}^{2m}+\mu(E)\cdot1_{\{\mu(E)<\infty\}}),
\end{eqnarray}
so by letting $\nu_0\rightarrow0$ in \eqref{eqnarray63.1} and taking into account \eqref{eqnal6} we obtain \eqref{eqnarray14}. Consequently, Theorem \ref{Theorem} is proved.
\end{proof}


\section{Proof of Theorem \ref{theorem1}}\label{ProofofTheorem3.1}
\setcounter{equation}{0}
 \setcounter{definition}{0}

After all our preparations, to deduce that the solution $X_\lambda$, $\lambda\in(0,1)$, of equation \eqref{eq:3} as $\lambda\rightarrow0$ converges to the unique solution of equation \eqref{eq:1} is now in principle quite standard (at least for experts on stochastic porous media equations), maybe except for proving \eqref{eqnarray3.2}. Since, however, there is no proof in the literature that covers our general case, we give a complete presentation of the arguments in this section.
\vspace{2mm}

\begin{proof}
Let $X_\lambda$ be as in Theorem \ref{Theorem}. Then it follows by Theorem \ref{Theorem} that there exists a process $X\in L^2(\Omega;C([0,T];\mathscr{F}_e^*))$ such that, as $\lambda\rightarrow0$,
\begin{eqnarray}\label{eqnal14}
 X_\lambda\!\!\!\!\!\!\!\!&&\rightarrow X~~~\text{strongly~in~}L^2(\Omega;C([0,T];\mathscr{F}_e^*)),\nonumber\\
 X_\lambda\!\!\!\!\!\!\!\!&&\rightarrow X~~~\text{weak-star~in~} L^\infty([0,T];(L^2\cap L^{2m})(\Omega\times E))\subset L^\infty([0,T];(L^{m+1})(\Omega\times E)),\nonumber\\
 \lambda X_\lambda\!\!\!\!\!\!\!\!&&\rightarrow0~~~\text{strongly~in~}L^2([0,T]\times\Omega\times E),\\
 \Psi_\lambda(X_\lambda)\!\!\!\!\!\!\!\!&&\rightarrow\eta~~~\text{weakly~in~}L^{\frac{2}{m}}([0,T]\times\Omega\times E)\cap L^2([0,T]\times \Omega\times E)\subset L^{\frac{m+1}{m}}([0,T]\times\Omega\times E).\nonumber
\end{eqnarray}
By \eqref{eqnal14} and \textbf{(H2)(i)} we may take the limit $\lambda\rightarrow0$ in \eqref{equ:2.3} in $L^2(\Omega;C([0,T];\mathscr{F}_e^*))$ to obtain that
\begin{eqnarray}\label{5.1}
X=\!\!\!\!\!\!\!\!&&x+\lim_{\lambda\rightarrow0}L\int_0^\cdot\Psi_\lambda(X_\lambda(s))+\lambda X_\lambda(s)ds\nonumber\\
\!\!\!\!\!\!\!\!&&+\int_0^\cdot B(s,X(s))dW(s),
\end{eqnarray}
where we have used that by \eqref{eqnal6} we may take the limit $\nu_0\rightarrow0$ in \textbf{(H2)(i)}. By Lemma \ref{rrw} we conclude that
\begin{eqnarray}\label{5.2}
\lim_{\lambda\rightarrow0}\int_0^\cdot\Psi_\lambda(X_\lambda(s))+\lambda X_\lambda(s)ds
\end{eqnarray}
exists in $L^2(\Omega;C([0,T];\mathscr{F}_e))$, hence by \textbf{(L.1)} in $L^2(\Omega;C([0,T];L^1(g\cdot\mu)))$ for some $g\in L^1(\mu)\cap L^\infty(\mu)$, $g>0$. Hence the limit in \eqref{5.2} coincides with the limit in $dt\otimes d\Bbb{P}\otimes d\mu$-measure. Therefore, $\int_0^\cdot\eta(s)ds\in L^2(\Omega;C([0,T];\mathscr{F}_e))$ and \eqref{5.1} implies
\begin{eqnarray}\label{5.22}
X(t)=x+L\int_0^t\eta(s)ds+\int_0^tB(s,X(s))dW(s),~~t\in[0,T].
\end{eqnarray}
Hence $X(t)$, $t\in[0,T]$, is a solution to \eqref{eq:1} in the sense of Definition \ref{definition1} if we can show that
\begin{eqnarray*}
\eta\in\Psi(X),~~dt\otimes\Bbb{P}\otimes\mu-a.e..
\end{eqnarray*}
For this it suffices to show that
\begin{eqnarray}\label{eqnal16}
\lim_{\lambda\rightarrow0}\sup\mathbb{E}\int_0^T\int_E\Psi_\lambda(X_\lambda)X_\lambda
d\mu dt\leq \mathbb{E}\int_0^T\int_E\eta X d\mu dt.
\end{eqnarray}

\vspace{2mm}
Indeed, since $\Psi_\lambda=\partial j_\lambda$, i.e. $\Psi_\lambda$ is the subdifferential of $j_\lambda$ (cf. \cite[page:7]{B}), where $j_\lambda$ is as in \eqref{eq:005}, we have for all $\lambda\in(0,1)$
\begin{eqnarray}\label{eqnarray67}
\mathbb{E}\int_0^T\int_E\Psi_\lambda(X_\lambda)(X_\lambda-Z)d\mu
dt\geq \mathbb{E}\int_0^T\int_Ej_\lambda(X_\lambda)-j_\lambda(Z)d\mu dt,
\end{eqnarray}
for all $Z\in L^{m+1}((0,T)\times\Omega\times E)$, since $X_\lambda\in (L^2\cap L^{2m}((0,T)\times \Omega\times E))\subset L^{m+1}((0,T)\times \Omega\times E)), \Psi_\lambda(X_\lambda)\in (L^{\frac{2}{m}}\cap L^2)((0,T)\times\Omega\times E)\subset L^{\frac{m+1}{m}}((0,T)\times\Omega\times E)$.\\

Let $\Psi^0:\Bbb{R}\rightarrow\Bbb{R}$ be defined by (see \cite[page: 54]{B}),
\begin{eqnarray*}
\Psi^0(r)=\{y\in\Psi(r);|y|=\inf\{|z|;z\in\Psi(r)\}\},~~\forall r\in \Bbb{R},
\end{eqnarray*}
$\Psi^0$ is monotonically increasing. Define the integral (see \cite[page:54]{B})
\begin{eqnarray*}
j(r):=\int_0^r\Psi^0(s)ds,~~\forall r\in \Bbb{R},
\end{eqnarray*}
the function $j$ is continuous on $(0,r)$ and convex on $\Bbb{R}$ satisfying $j=\partial\Psi$
\begin{eqnarray}\label{5.5}
0\leq j(r)\leq r \Psi^0(r),~~ \forall r\in \Bbb{R},
\end{eqnarray}
and from \cite[page:41, Proposition 2.3]{B}
\begin{eqnarray}\label{eqnal02}
\lim_{\lambda\rightarrow0}\Psi_\lambda(r)=\Psi^0(r),~~\forall r\in\Bbb{R}.
\end{eqnarray}
Recall from \cite[page:48, Theorem 2.9]{B} that $\Psi_\lambda=\partial j_\lambda$ with
\begin{eqnarray}\label{eq:005}
j_\lambda(r):=\inf\Big\{\frac{|r-\overline{r}|^2}{2\lambda}+j(\overline{r}); \overline{r}\in\mathbb{R}\Big\},\ \ \forall r\in \mathbb{R}.
\end{eqnarray}
Moreover,
\begin{eqnarray}
 &&j_\lambda\geq0,\nonumber\\
 &&\lim_{\lambda\rightarrow0}j_\lambda(r)=j(r), \forall r\in \Bbb{R},\label{5.7}\\
 &&j_\lambda(r)\leq j(r), \forall r\in \Bbb{R}.\label{5.8}
\end{eqnarray}
Consequently, for all $Z\in L^{m+1}((0,T)\times \Omega\times E)$
\begin{eqnarray}\label{eqnarray66}
\lim_{\lambda\rightarrow0}\sup\mathbb{E}\int_0^T\int_Ej_\lambda(X_\lambda)-j_\lambda(Z)d\mu dt\geq \mathbb{E}\int_0^T\int_Ej(X)-j(Z)d\mu dt.
\end{eqnarray}
Indeed, by \eqref{5.5}, \eqref{5.8} and \textbf{(H1)}
\begin{eqnarray*}
|j_\lambda(z)|\leq C(|z|^{m+1}+|z|\cdot1_{\{\mu(E)<\infty\}}),
\end{eqnarray*}
hence by Lebesgue's dominated convergence theorem and \eqref{5.7}
$$\lim_{\lambda\rightarrow0}\mathbb{E}\int_0^T\int_Ej_\lambda(Z)d\mu dt=\mathbb{E}\int_0^T\int_Ej(Z)d\mu dt.$$
Furthermore, by \eqref{eqnal01}, \eqref{eqnal02}, \textbf{(H1)} and because $X\in L^{2m}((0,T)\times \Omega\times E)$ we have as $\lambda\rightarrow0$, $\Psi_\lambda(X)\rightarrow\Psi^0(X)$ in $L^2((0,T)\times\Omega\times E)$, hence
\begin{eqnarray*}
 \!\!\!\!\!\!\!\!&&\lim_{\lambda\rightarrow0}\sup\mathbb{E}\int_0^T\int_Ej_\lambda(X_\lambda)-j(X)d\mu dt\nonumber\\
=\!\!\!\!\!\!\!\!&&\lim_{\lambda\rightarrow0}\sup\mathbb{E}\int_0^T\int_Ej_\lambda(X_\lambda)-j_\lambda(X)d\mu dt\nonumber\\
\geq\!\!\!\!\!\!\!\!&&\lim_{\lambda\rightarrow0}\sup\mathbb{E}\int_0^T\int_E\Psi_\lambda(X)(X_\lambda-X)d\mu dt\nonumber\\
=\!\!\!\!\!\!\!\!&&0.
\end{eqnarray*}
Hence by \eqref{eqnal16}, \eqref{eqnarray67} and \eqref{eqnarray66}, we have $\forall Z\in
L^{m+1}((0,T)\times\Omega\times E)$,
\begin{eqnarray*}
\mathbb{E}\int_0^T\int_E\eta(X-Z)d\mu dt\geq
\mathbb{E}\int_0^T\int_E\big(j(X)-j(Z)\big)d\mu dt.
\end{eqnarray*}
This yields
\begin{eqnarray}\label{eqnal17}
\mathbb{E}\int_0^T\int_E\eta(X-Z)d\mu dt\geq
\mathbb{E}\int_0^T\int_E\zeta(X-Z)d\mu dt,
\end{eqnarray}
for all $Z\in L^{m+1}((0,T)\times\Omega\times E)$ and $\zeta\in
L^{\frac{m+1}{m}}((0,T)\times \Omega\times E)$ such that $\zeta\in
\Psi(Z)$ a.e. on $(0,T)\times \Omega\times E$.

By virtue of assumption \textbf{(H1)}, $\Psi$ is maximal
monotone in $L^{m+1}((0,T)\times\Omega\times E)\times
L^{\frac{m+1}{m}}((0,T)\times \Omega\times E)$, so by
\cite[page:34, Theorem 2.2]{B} one knows that the equation
\begin{equation}\label{eq:8}
J(Z)+\Psi(Z)\ni J(X)+\eta,
\end{equation}
where $J(Z)=|Z|^{m-1}Z$, has a unique solution $Z\in L^{m+1}((0,T)\times\Omega\times E)$.

Now if in \eqref{eqnal17}, we take $Z$ to be the solution of \eqref{eq:8} and $\zeta:=J(X)-J(Z)+\eta$,
we obtain
\begin{eqnarray*}
\mathbb{E}\int_0^T\int_E(J(X)-J(Z))(X-Z)d\mu
dt\leq0,
\end{eqnarray*}
i.e.,
\begin{eqnarray*}
\mathbb{E}\int_0^T\int_E(|
X|^{m-1}X-| Z|^{m-1}
Z)(X- Z)d\mu dt\leq0.
\end{eqnarray*}
Since $J:r\rightarrow |r|^{m-1}r$ is strictly increasing, it follows that
\begin{eqnarray*}\label{eqnal9}
\mathbb{E}\int_0^T\int_E((|
X|^{m-1}X-| Z|^{m-1}
Z)(X- Z)d\mu dt=0.
\end{eqnarray*}
Hence $X=Z$ a.e. on $(0,T)\times\Omega\times E$, and thus by
\eqref{eq:8}, we have $\eta\in \Psi(X)$, $\Bbb{P}\otimes dt\otimes\mu$, $a.e.$.

It remains to prove \eqref{eqnal16}. In order to apply the It\^{o} formula from \cite[Theorem 4.2.5]{LR} to the equations \eqref{eq:3} and \eqref{5.22}, we need to use an appropriate Gelfand triple. Recall a special case of \cite[Proposition 3.1]{RRW} that
\begin{eqnarray}\label{lemma7.4}
\mathscr{F}_e\cap L^{\frac{m+1}{m}}(\mu)~\text{is~dense~both~in}~\mathscr{F}_e~\text{and}~L^{\frac{m+1}{m}}(\mu).
\end{eqnarray}
Define
\begin{eqnarray*}
V:=\{u\in L^{m+1}(\mu)|\exists ~C\in(0,\infty)~ \text{such~that}~|\mu(uv)|\leq C\|v\|_{\mathscr{F}_e},~ \forall~ v\in\mathscr{F}_e\cap L^{\frac{m+1}{m}}(\mu)\}.
\end{eqnarray*}
By \eqref{lemma7.4}, $V$ is a subspace of $\mathscr{F}_e^*$ and can be symbolically written as $V=L^{m+1}(\mu)\cap\mathscr{F}^*_{e}$. We note that $V$ is reflexive, since $L^{m+1}(\mu)$ and $\mathscr{F}^*_{e}$ are reflexive, hence so is $L^{m+1}(\mu)\times \mathscr{F}^*_{e}$. But
\begin{eqnarray*}
V\ni u\mapsto (u,\mu(u\ \cdot))\in L^{m+1}\times\mathscr{F}^*_{e}
\end{eqnarray*}
is a homeomorphic isomorphism, mapping $V$ onto a closed subspace of $L^{m+1}\times \mathscr{F}^*_{e}$, which is reflexive.

Recall special cases of \cite[Proposition 3.1]{RRW} and \cite[Lemma 3.3(ii)]{RRW} that
\begin{eqnarray}\label{lemma7.5}
V~\text{is~dense~both~in}~\mathscr{F}^*_{e}~\text{and}~L^{m+1}(\mu),
\end{eqnarray}
and for the map $L:=\overline{L}:\mathscr{F}_{e}\rightarrow\mathscr{F}^*_{e}$ defined in Lemma \ref{rrw} we have for all $v\in\mathscr{F}_{e}\cap L^{\frac{m+1}{m}}(\mu)$, $u\in V$,
\begin{eqnarray*}\label{eq:7}
\langle Lv,u \rangle_{\mathscr{F}^*_{e}}=-\mu(vu).
\end{eqnarray*}

Now we set $H:=\mathscr{F}^*_{e}$ and consider the Gelfand triple
\begin{eqnarray*}
V\subset H\subset V^*.
\end{eqnarray*}
Consider the operator
\begin{eqnarray*}
L:\mathscr{F}_{e}\cap L^{\frac{m+1}{m}}(\mu)\rightarrow \mathscr{F}^*_{e}\subset V^*,
\end{eqnarray*}
as $V^*$-valued, i.e., $Lv=-\mu(v\ \cdot)\in V^*$. Then by \eqref{lemma7.5}, $L$ is continuous w.r.t. the norm $|\cdot|_{\frac{m+1}{m}}$ on $\mathscr{F}_{e}\cap L^{\frac{m+1}{m}}(\mu)$, hence by \eqref{lemma7.4} has a unique continuous linear extension
\begin{eqnarray*}
\widetilde{L}: L^{\frac{m+1}{m}}(\mu)\rightarrow V^*,
\end{eqnarray*}
such that
\begin{eqnarray}\label{7.1}
_{V^*}\langle \widetilde{L}v,u\rangle_V=\mu(v u),~~\forall v\in L^{\frac{m+1}{m}}(\mu), u\in V.
\end{eqnarray}

Now we consider equation \eqref{5.22} in the large space $V^*$. Then, since $\eta\in L^{\frac{m+1}{m}}([0,T]\times\Omega\times E)$,
\begin{eqnarray*}
L\int_0^t\eta(s)ds=\widetilde{L}\int_0^t\eta(s)ds=\int_0^t\widetilde{L}\eta(s)ds\in V^*,
\end{eqnarray*}
so
\begin{eqnarray*}
X(t)=x+\int_0^t\widetilde{L}\eta(s)ds+\int_0^tB(s,X(s))dW(s), t\in[0,T],
\end{eqnarray*}
hence by the It\^{o} formula from \cite[Theorem 4.2.5]{LR} with the Gelfand triple $V=L^{m+1}(\mu)\cap\mathscr{F}^*_{e}\subset \mathscr{F}^*_{e}\subset V^*$, we have
\begin{eqnarray}\label{eqnarray38}
\!\!\!\!\!\!\!\!&&\frac{1}{2}\mathbb{E}\|X(t)\|^2_{\mathscr{F}_e^*}+\mathbb{E}\int_0^t\int_E
\eta\cdot X(s)d\mu ds\nonumber\\
=\!\!\!\!\!\!\!\!&&\frac{1}{2}\|
x\|^2_{\mathscr{F}_e^*}+\frac{1}{2}\mathbb{E}\int_0^t\|B(s,X(s))\|^2_{L_2(L^2(\mu),{\mathscr{F}_e^*})}ds.
\end{eqnarray}
By similar arguments (a special case with $m=1$) we get for \eqref{eq:3}
\begin{eqnarray*}
X_\lambda(t)=x+\int_0^t\widetilde{L}(\Psi_\lambda(X_\lambda(s))+\lambda X_\lambda(s))ds+\int_0^tB(s,X_\lambda(s))dW(s), t\in[0,T],
\end{eqnarray*}
hence
\begin{eqnarray}\label{eqnarray37}
\!\!\!\!\!\!\!\!&&\frac{1}{2}\mathbb{E}\| X_\lambda(t)\|^2_{\mathscr{F}^*_e}-\mathbb{E}\int_0^t~ _{V^*}\big\langle \widetilde{L}\big(\Psi_\lambda(X_\lambda(s))+\lambda X_\lambda(s)\big), X_\lambda(s)\big\rangle_{V} ds \nonumber\\
=\!\!\!\!\!\!\!\!&&\frac{1}{2}\|x\|^2_{\mathscr{F}^*_e}+\frac{1}{2}\mathbb{E}\int_0^t\|
B(s,X_\lambda(s))\|^2_{L_2(L^2(\mu),{\mathscr{F}^*_e})}ds.
\end{eqnarray}
By \eqref{eq:7}, we know that
\begin{eqnarray}\label{5.12}
-~ _{V^*}\big\langle \widetilde{L}\big(\Psi_\lambda(X_\lambda(s))+\lambda X_\lambda(s)\big), X_\lambda(s)\big\rangle_{V}=\int_E\big(\Psi_\lambda(X_\lambda(s))+\lambda X_\lambda(s)\big)\cdot X_\lambda(s)d\mu.
\end{eqnarray}
Letting $\lambda\rightarrow0$ in \eqref{eqnarray37} after plugging in \eqref{5.12}, using \eqref{5.1} and comparing with \eqref{eqnarray38}, we obtain \eqref{eqnal16} (even with $``="$ replacing $``\leq"$).

\medskip
\noindent \textbf{(Uniqueness)}~~Suppose $X_1$, $X_2$ are two strong
solutions to \eqref{eq:1}. We have with $\widetilde{L}$ that
\begin{equation*} \label{eq:10}
\left\{ \begin{aligned}
&d(X_1-X_2)-\widetilde{L}(\eta_1-\eta_2)dt=(B(t,X_1)-B(t,X_2))dW(t),\ \text{in}\ [0,T]\times E,\\
&X_1-X_2=0 \text{~on~} E,
\end{aligned} \right.
\end{equation*}
where $\eta_i\in \Psi(X_i)$, $i=1,2$, a.e. on $\Omega\times
(0,T)\times E$.

As above we may apply It\^{o}'s formula from \cite[Theorem 4.2.5]{LR} with the same Gelfand triple as used in \eqref{eqnarray38} to get
\begin{eqnarray}\label{eqnarray58}
&&\!\!\!\!\!\!\!\!\frac{1}{2}d\|X_1-X_2\|^2_{\mathscr{F}_e^*}-~ _{V^*}\big\langle
\widetilde{L}(\eta_1-\eta_2), X_1-X_2\big\rangle_{V}dt\nonumber\\
=&&\!\!\!\!\!\!\!\!\frac{1}{2}\big\|B(t,X_1)-B(t,X_2)\big\|^2_{L_2(L^2(\mu),\mathscr{F}_e^*)}dt\nonumber\\
&&\!\!\!\!\!\!\!\!+\big\langle X_1-X_2,
(B(s,X_1)-B(s,X_2))dW_t\big\rangle_{\mathscr{F}_e^*}.
\end{eqnarray}
Since $\Psi$ is monotone, by \eqref{eq:7} we have
\begin{eqnarray}\label{eqnarray65}
&&\!\!\!\!\!\!\!\!\mathbb{E}\int_0^T~ _{V^*}\big\langle
-\widetilde{L}(\eta_1-\eta_2),
X_1-X_2\big\rangle_{V}dt\nonumber\\
=&&\!\!\!\!\!\!\!\!\mathbb{E}\int_0^T\int_E
(\eta_1-\eta_2)\cdot( X_1-X_2)d\mu dt\geq0.
\end{eqnarray}
Therefore, integrating \eqref{eqnarray58} from 0 to $t$ and taking expectation, by
\eqref{eqnarray65} and Remark \ref{mainremark} (i), we obtain
\begin{eqnarray*}
  &&\mathbb{E}\|X_1-X_2\|^2_{\mathscr{F}_e^*}\leq C_1
\int_0^t\mathbb{E}\|X_1-X_2\|^2_{\mathscr{F}_e^*}ds,\ \
\forall t\in [0,T],
\end{eqnarray*}
and by Gronwall's inequality we get $X_1=X_2$, $\Bbb{P}-$a.s.. Thus Theorem \ref{theorem1} is proved.
\end{proof}
\section{Applications}\label{Applications}
\setcounter{equation}{0}
 \setcounter{definition}{0}


\begin{example}\label{example for psi}\textbf{(Example for $\Psi$)}
Recall a known fact from \cite[page:54]{B}: if $\tilde{\Psi}:\Bbb{R}\rightarrow\Bbb{R}$ is an increasing function and if $\{r_i|i\in\Bbb{N}\}\subset \Bbb{R}$ is the set of all $r\in\Bbb{R}$ for which $\tilde{\Psi}$ is discontinuous in $r$, then one gets a maximal monotone multivalued function $\Psi:\Bbb{R}\rightarrow2^{\Bbb{R}}$ by filling the gaps, i.e., define
\begin{eqnarray*}
\Psi(r):=\left\{
           \begin{array}{ll}
             &\!\!\!\!\!\!\tilde{\Psi}(r),~~~~~~~~~~~~~~~~~~~~~~~~~~~~~~\text{for}~r\notin \{r_i|i\in\Bbb{N}\},\\
             &\!\!\!\!\!\![\tilde{\Psi}(r_j-0), \tilde{\Psi}(r_j+0)],~~~\text{else}.
           \end{array}
         \right.
\end{eqnarray*}

A concrete example of maximal monotone multivalued $\Psi$ satisfying \textbf{(H1)} could be constructed as following: for $m\in[1,\infty)$, define
\begin{eqnarray*}
\Psi(r):=\left\{
           \begin{array}{ll}
             &\!\!\!\!\!\!|r+2|^{m-1}(r+2)-2,~~\text{if}~r\in(-\infty,-2],\\
             &\!\!\!\!\!\!-2,~~~~~~~~~~~~~~~~~~~~~~~~~~~~~~\text{if}~r\in(-2,-1),\\
             &\!\!\!\!\!\![-2,-1],~~~~~~~~~~~~~~~~~~~~~\text{if}~r=-1,\\
             &\!\!\!\!\!\!|r|^{m-1}r,~~~~~~~~~~~~~~~~~~~~~~~~\text{if}~r\in(-1,1),\\
             &\!\!\!\!\!\![1,2],~~~~~~~~~~~~~~~~~~~~~~~~~~~\text{if}~r=1,\\
             &\!\!\!\!\!\!2,~~~~~~~~~~~~~~~~~~~~~~~~~~~~~~~~~\text{if}~r\in(1,2),\\
             &\!\!\!\!\!\!|r-2|^{m-1}(r-2)+2,~~\text{if}~r\in[2,\infty).
           \end{array}
         \right.
\end{eqnarray*}
Hence our result covers non-continuous nonlinearities $\Psi$, as is indicated in the title of the paper.

Another typical example for $\Psi$ satisfying \textbf{(H1)} is
\begin{eqnarray*}
\Psi(r):=|r|^{\gamma}\text{sign}(r),~~\forall r\in\Bbb{R},~~\gamma\in[0,1],
\end{eqnarray*}
where $\text{sign}:\Bbb{R}\rightarrow 2^{\Bbb{R}}$ is defined as
\begin{eqnarray*}
\text{sign}(r):=\left\{
           \begin{array}{ll}
             &\!\!\!\!\!\!1,~~~~~~~~~~~~\text{if}~r>0,\\
             &\!\!\!\!\!\![-1,1],~~~\text{if}~r=0,\\
             &\!\!\!\!\!\!-1,~~~~~~~~~\text{if}~r<0,
           \end{array}
         \right.
\end{eqnarray*}
if $L=\Delta$ on an open bounded domain of $\Bbb{R}^d$ ($d\geq3$), then \eqref{eq:1} corresponds to the fast diffusion equation (FDE) perturbed by multiplicative noise and the special case $\gamma=0$ is related to the SOC model (see \cite{BTW88}) mentioned in the introduction. Applications of the FDE have been proposed in different areas. The equation appears in plasma physics. One behavior of the solution to FDE is it decays to zero in finite time. The equation in dimensions $d\geq3$ with exponent $\gamma=\frac{d-2}{d+2}$ has an important application in geometry (the Yamabe flow) (see \cite[Section 7.5]{V1}), while the Okuda-Dawson law corresponds to $\gamma=\frac{1}{2}$ (see \cite{OD}), for more mathematical studies of the (stochastic) FDE we refer to \cite{GRSIAM, HP85, RRW} and references therein.
\end{example}


\begin{example}\label{example for B}\textbf{(Example for $B$)}
For the convenience of the reader, here we recall that there is a standard large class of example for $B$ already described in \cite[page: 3914, Remark 2.9(iii)]{RW}, which satisfy \textbf{(H2)} and \textbf{(H3)}. Let $N\in\Bbb{N}\cup \{+\infty\}$ and $e_k\in L^2(\mu)\cap L^\infty(\mu)$, $1\leq k< N$, be an orthonormal system in $L^2(\mu)$ such that for every $1\leq k< N$ there exists $\xi_k\in(0,\infty)$ such that for all $\nu\in(0,\infty)$
\begin{eqnarray*}\label{eq:6.2}
| _{F^*_{1,2}}\langle x,e_k u\rangle_{F_{1,2}}|\leq\xi_k\|x\|_{F^*_{1,2,\nu}}\|u\|_{F_{1,2,\nu}},~~\forall~u\in F_{1,2},~x\in F^*_{1,2}.
\end{eqnarray*}
The above assumption means that each $e_k$ is a multiplier in $(F^*_{1,2},\|\cdot\|_{F^*_{1,2,\nu}})$ with norm independent of $\nu>0$. Choose $\mu_k\in(0,\infty)$ such that
\begin{eqnarray}\label{eq:6.3}
\sum_{k=1}^{\infty}\mu_k^2(\xi_k^2+|e_k|^2_\infty)<\infty,
\end{eqnarray}
and define for $x\in F^*_{1,2}$, $B(x)\in L_2(L^2(\mu), F^*_{1,2})$ by
\begin{eqnarray}\label{eq:6.1}
B(x)h:=\sum_{k=1}^{\infty}\mu_k\langle e_k,h\rangle x\cdot e_k,~~h\in L^2(\mu).
\end{eqnarray}
By extending $\{e_k|k\in\Bbb{N}\}$ to an orthonormal basis of $L^2(\mu)$, we can get \textbf{(H2)(ii)} as in \cite[page:3915]{RW}, \textbf{(H2)(i)} follows from the linearity of $B(x)$. From \eqref{eq:6.3} and \eqref{eq:6.1}, \textbf{(H3)(i)} and \textbf{(H3)(ii)} are obvious.
\end{example}


\begin{example}\label{example for local E}\textbf{(Example for local $\mathscr{E}$)}
Suppose $(\mathscr{E}, F_{1,2})$ is a local transient Dirichlet form with generator $L$ such that it admits a carr\'e du champ $\Gamma$ (\cite[Definition 4.1.2]{BH}), which is a unique positive symmetric and continuous bilinear map from $F_{1,2}\times F_{1,2}$ into $L^1(\mu)$ such that
\begin{eqnarray*}
\mathscr{E}(uw,v)+\mathscr{E}(vw,u)-\mathscr{E}(w,uv)=\int w\Gamma(u,v)d\mu,~\forall u,v,w\in F_{1,2}.
\end{eqnarray*}
From \cite[Propostion 6.1.1]{BH}, we know that then
\begin{eqnarray}\label{gamma}
\mathscr{E}(u,v)=\frac{1}{2}\int\Gamma(u,v)d\mu,~~u,v\in F_{1,2},
\end{eqnarray}
which implies \textbf{(H4)(i)}.

By \cite[Corollary 7.1.2]{BH}, we know that for every Lipschitz function $\varphi:\Bbb{R}\rightarrow\Bbb{R}$ which satisfies $\varphi(0)=0$,
\begin{eqnarray*}\label{6.20}
\Gamma(\varphi(u),v)=\varphi'(u)\Gamma(u,v),~~\forall~u,v\in F_{1,2},
\end{eqnarray*}
where $\varphi'$ is any version of the derivatives (defined Lebesgue-a.e.) of $\varphi$. Furthermore, if $\varphi$ is nondecreasing, then
$$\Gamma(u,\varphi(u))=\varphi'(u)\Gamma(u,u)\geq0,~\forall~ u\in F_{1,2},$$
and
\begin{eqnarray*}\label{eq:6.03}
\Gamma(\varphi(u),\varphi(u))=\varphi'(u)\Gamma(u,\varphi(u))\leq \emph{esssup}_{r\in \Bbb{R}}~\varphi'(r)\Gamma(u,\varphi(u)),~\forall~ u\in F_{1,2},
\end{eqnarray*}
which implies \textbf{(H4)(ii)}.

There is an abundance of examples of such Dirichlet forms on very general state spaces $E$, as e.g. finite or infinite dimensional manifolds. A typical example is the Ornstein- Uhlenbeck operator on the Wiener space (\cite[page:83, Proposition 2.3.2]{BH}), for more details and examples we refer e.g. to \cite{BH, FOT, MR} and also \cite{H}. In the following, we briefly cite an example of manifold-valued transient Dirichlet form from \cite{PPA}.

Let $\widetilde{M}$ be a $C^\infty$ manifold and let $\mu$ be a measure with positive $C^\infty$ density with respect to the Lebesgue measure on each local chart. If $(\Xi_i; 1\leq i\leq q)$ are $C^\infty$ vector fields, we can consider the operator
\begin{eqnarray*}\label{example1}
\Gamma(u,v)=\sum_{i=1}^q(\Xi_iu)(\Xi_iv)
\end{eqnarray*}
and the form $\widetilde{\mathscr{E}}$ associated by \eqref{gamma}. We can close this form and obtain a Dirichlet space $\widetilde{\Bbb{D}}$. If now $M$ is an open subset of $\widetilde{M}$, one can let $(\mathscr{E},\Bbb{D})$ be the part of $(\widetilde{\mathscr{E}},\widetilde{\Bbb{D}})$ on $M$; this is the subset of functions, a quasicontinuous modification of which is zero on $\widetilde{M}\backslash M$. If $\widetilde{\mathscr{E}}$ is irreducible and $\widetilde{M}\backslash M$ has positive capacity, then $\mathscr{E}$ is transient. For more details and explanations, we refer to \cite[page:57, Example 1]{PPA} (see also \cite[page:57, Example 2]{PPA} for the example on Riemannian manifold).
\end{example}

\begin{example}\label{example for nonlocal E}\textbf{(Example for nonlocal $\mathscr{E}$)}
As is well-known, under quite general assumptions according to the Beurling-Deny representation formula a Dirichlet can be written as the sum of a local Dirichlet form $\mathscr{E}^{(1)}$ (i.e. if it has a square field operator, it satisfies \eqref{6.20}) and a non-local part $\mathscr{E}^{(2)}$ (see \cite[Section 3.2]{FOT} or \cite{HZ} for details). A typical form of $\mathscr{E}^{(2)}$ is as follows
\begin{eqnarray*}
\mathscr{E}^{(2)}(u,v)=\int_E\int_E(u(x)-u(y))(v(x)-v(y)) J(x,dy)\mu(dx),~~u,v\in D(\mathcal{E}^{(2)}),
\end{eqnarray*}
where $J$ is a kernel on $(E,\mathcal{B})$ and $\mu$ is a $\sigma$-finite measure on $(E,\mathcal{B})$. Therefore,
\begin{eqnarray*}
\mathscr{E}^{(2)}(u,v)=\frac{1}{2}\int_E\Gamma(u,v)d\mu,~~u,v\in D(\mathscr{E}^{(2)}),
\end{eqnarray*}
where for $x\in E$
\begin{eqnarray*}
\Gamma(u,v)(x)=2\int_E(u(x)-u(y))(v(x)-v(y))J(x,dy).
\end{eqnarray*}
Clearly, $\Gamma$ does not satisfy \eqref{6.20}, but it satisfies our condition \textbf{(H4)(ii)}. Indeed, for every non-decreasing Lipschitz function $\varphi:\Bbb{R}\rightarrow\Bbb{R}$ with $\varphi(0)=0$ and $u\in D(\mathscr{E}^{(2)})$ we have
\begin{eqnarray*}
\Gamma(\varphi(u),\varphi(u))(x)=&&\!\!\!\!\!\!\!\!2\int_E\Big(\varphi(u(x))-\varphi(u(y))\Big)^2J(x,dy)\\
\leq&&\!\!\!\!\!\!\!\!2Lip\varphi\Big(\int_E1_{\{u(x)\geq u(y)\}}(u(x)-u(y))\big(\varphi(u(x))-\varphi(u(y))\big)J(x,dy)\\
&&\!\!\!\!\!\!\!\!~~~~~~~~+\int_E1_{\{u(x)<u(y)\}}(u(y)-u(x))\big(\varphi(u(y))-\varphi(u(x))\big)J(x,dy)\Big)\\
=&&\!\!\!\!\!\!\!\!2Lip\varphi\Gamma(u,\varphi(u)).
\end{eqnarray*}

There are plenty of examples of such Dirichlet forms, not only when the corresponding state space $E:=\Bbb{R}^d$, but also when $E$ is a general state space, as e.g. a fractal. Concrete Dirichlet forms on $\Bbb{R}^d$ will be given in Example \ref{nonlocal E}, while in this section, we recall an example from \cite{GY} with $E$ being a Sierpi\'{n}ski graph.

Let $c: E\times E\rightarrow [0,\infty)$ be conductance satisfying
\begin{eqnarray*}
&&c(x,y)=c(y,x),\\
&&\pi(x)=\sum_{y\in E}c(x,y)\in(0,+\infty),\\
&&c(x,y)>0\Leftrightarrow x\sim y,
\end{eqnarray*}
for all $x,y\in E$. Let $\mu: E\rightarrow (0,\infty)$ be a positive function given by
$$\mu(x)=\big(\frac{c}{3\lambda}\big)^{|x|},~~x\in E,$$
where $c\in(0,\lambda)\subseteq (0,1)$. Then $\mu$ can be regarded as a finite measure on $E$. Set a symmetric form on $L^2(E;\mu)$ by
\begin{eqnarray*}
           \left\{
           \begin{array}{ll}
             &\!\!\!\!\!\!\!\!\mathscr{E}(u,v)=\frac{1}{2}\sum_{x,y\in E}c(x,y)(u(x)-u(y))(v(x)-v(y)),\\
             &~\\
             &\!\!\!\!\!\!\!\!D(\mathscr{E})=\text{the}~(\mathscr{E})_1-\text{closure~of}~C_0(E),
           \end{array}
         \right.
\end{eqnarray*}
where $C_0(E)$ is the set of all functions with finite support. Then, $(\mathscr{E}, D(\mathscr{E}))$ is a regular Dirichlet form on $L^2(E;\mu)$ and it is transient. For the detailed proofs and more information about Sierpi\'{n}ski gasket, we refer to \cite{GY} and references therein.
\end{example}


\begin{example}\label{nonlocal E}\textbf{(Nonlocal $\mathscr{E}$ on $\Bbb{R}^d$)}
Based on \cite[page: 31, Example 1.4.1]{FOT}, \cite[page:48, Example 1.5.2]{FOT} and Example \ref{example for nonlocal E}, one can give a large class of examples of transient nonlocal Dirichlet forms on $\Bbb{R}^d$ satisfying assumptions \textbf{(H4)}. Here we only briefly present the main framework. Let $\{\nu_t, t>0\}$ on $\Bbb{R}^d$ be a continuous symmetric convolution semigroup of probability measures, i.e.,
\begin{eqnarray*}
&&\nu_t\ast\nu_s=\nu_{t+s},~~t,s>0,\nonumber\\
&&\nu_t(A)=\nu_t(-A),~~A\in\mathcal{B}(\Bbb{R}^d),\nonumber\\
&&\lim_{t\downarrow0}\nu_t=\delta~~\text{weakly},\nonumber
\end{eqnarray*}
where $\nu_t\ast\nu_s(A)$ denotes the convolution $\int_{\Bbb{R}^d}\nu_t(A-y)\nu_s(dy)$ and $\delta$ is the Dirac measure concentrated at the origin. The celebrated L\'{e}vy-Khinchin formula (\cite{I}) reads as follows
\begin{eqnarray}\label{con1}
           \left\{
           \begin{array}{ll}
             &\!\!\!\!\!\!\!\!\hat{\nu}_t(x)\big(=\int_{\Bbb{R}^d}e^{i(x,y)}\nu_t(dy)\big)=e^{-t\psi(x)},\\
             &~\\
             &\!\!\!\!\!\!\!\!\psi(x)=\frac{1}{2}(Sx,x)+\int_{\Bbb{R}^d}(1-\cos(x,y))J(dy),
           \end{array}
         \right.
\end{eqnarray}
where
\begin{eqnarray}
&&S~\text{is~a~non-negative~definite}~d\times d~\text{symmetric~matrix},\label{con2}\\
&&J~\text{is~a~symmetric~measure~on}~\Bbb{R}^d\setminus\{0\}~\text{such~that}~\int_{\Bbb{R}^d\setminus\{0\}}\frac{|x|^2}{1+|x|^2}J(dx)<\infty.\label{con3}
\end{eqnarray}
Thus a continuous symmetric convolution semigroup $\{\nu_t,t>0\}$ is characterized by a pair $(S,J)$ satisfying \eqref{con2} and \eqref{con3} through the formula \eqref{con1}.

When $S$ vanishes, the Dirichlet form $\mathscr{E}$ on $L^2(\Bbb{R}^d)$ determined by $\{\nu_t, t>0\}$ is
\begin{eqnarray}
&&\!\!\!\!\!\!\!\!\mathscr{E}(u,v)\!=\!\frac{1}{2}\int_{\Bbb{R}^d\times(\Bbb{R}^d\setminus\{0\})}\!\!\!\!\big(u(x+y)\!-\!u(x)\big)\big(v(x+y)\!-\!v(x)\big)J(dy)dx\label{E}\\
&&\!\!\!\!\!\!\!\!D(\mathscr{E})\!=\!\{u\in L^2(\Bbb{R}^d)\!:\!\int_{\Bbb{R}^d\times(\Bbb{R}^d\setminus\{0\})}\!\!\!\!\big(u(x+y)\!-\!u(x)\big)\big(v(x+y)\!-\!v(x)\big)J(dy)dx<\infty\}\label{DE}.
\end{eqnarray}
Note that $J$ can be extended to a symmetric measure on $\Bbb{R}^d$ by setting $J(\{0\})=0$. From \cite[page:48, Example 1.5.2]{FOT} we know that, if $\frac{1}{\psi(x)}$ is locally integrable on $\Bbb{R}^d$, then $\mathscr{E}$ is transient.

A typical example (see \cite[page:49]{FOT}) is $\psi(x)=|x|^\alpha$ with $0<\alpha\leq 2$, which corresponds to
$$S=0~~~\text{and}~~~J(dy)=\frac{\alpha 2^{\alpha-1}\Gamma(\frac{\alpha+d}{2})}{\pi^{d/2}\Gamma(\frac{2-\alpha}{2})}|y|^{-d-\alpha}dy,$$
then the Dirichlet form $\mathscr{E}$ is transient if and only if $\alpha<d$. This example can be also described in the following way. Let ``$\hat{~~}$" resp. ``$\check{~~}$" denote Fourier transform, i.e.,
$$
\hat{\phi}(x)=(2\pi)^{-\frac{d}{2}}\int \exp[i\langle x,y\rangle_{\Bbb{R}^d}]\phi(y)dy,
$$
resp. its inverse. Define for $\alpha>0$
\begin{eqnarray}\label{L}
(-\Delta)^{\frac{\alpha}{2}}u:=\big(|x|^{\alpha}\hat{u}\big)^{\check{~}}~~~(\in L^2(\Bbb{R}^d;dx)),~~~u\in C_0^\infty(\Bbb{R}^d).
\end{eqnarray}
Then $(-\Delta)^{\frac{\alpha}{2}}$ is a symmetric linear operator on $L^2(\Bbb{R}^d;dx)$ with dense domain $C_0^\infty(\Bbb{R}^d)$. Then $(\mathscr{E}, D(\mathscr{E}))$ in \eqref{E} and \eqref{DE} is the closure of the form
$$
\Bbb{D}^{({\frac{\alpha}{2}})}(u,v):=\frac{1}{2}\int\hat{u}\overline{\hat{v}}|x|^{\alpha}dx,~~~u,v\in C_0^\infty(\Bbb{R}^d).
$$
For more details we refer to \cite[page:43]{MR}.

Finally, we would like to mention that if in \eqref{con1} we choose $\psi(x):=f(|x|^2)$, where $f$ is a Bernstein function (cf. \cite{SSV}) such that $\frac{1}{f(|x|^2)}$ is locally integrable, then in \eqref{L} we get $f(-\Delta)$ instead of $(-\Delta)^{\frac{\alpha}{2}}$, i.e., $L=-f(-\Delta)$ in \eqref{eq:1}.
\end{example}

\section{Appendix}\label{Appendix}
\setcounter{equation}{0}
 \setcounter{definition}{0}

\subsection{Auxiliary results}\label{Auxiliaryresults}
In this part we aim to prove \eqref{eqna19}, which has been used in the proof of Claims \ref{claim1} and \ref{claim2}.
\begin{lemma}\label{lemma7.1}
Let $\nu\in(0,1]$, $\varepsilon\in(0,1)$ and $\lambda\in(0,+\infty)$. For all $x\in F^*_{1,2}$, we have
\begin{eqnarray}\label{eqna16}
&&\!\!\!\!\!\!\!\!\big\langle(\nu-L)((\Psi_\lambda+\lambda I)(J_\varepsilon(x))),
x\big\rangle_{F^*_{1,2,\nu}}\nonumber\\
=&&\!\!\!\!\!\!\!\!
\big\langle(\Psi_\lambda+\lambda I)(J_\varepsilon(x)),J_\varepsilon(x)\big\rangle_2+\varepsilon\|(\nu-L)((\Psi_\lambda+\lambda I)(J_\varepsilon(x)))\|^2_{F^*_{1,2,\nu}}.
\end{eqnarray}
For all $x\in L^2(\mu)$,
\begin{eqnarray}\label{eqna17}
&&\!\!\!\!\!\!\!\!\big\langle(\nu-L)(\Psi_\lambda+\lambda I)(J_\varepsilon(x)),x\big\rangle_2\nonumber\\
=&&\!\!\!\!\!\!\!\!\big\langle(\nu-L)(\Psi_\lambda+\lambda I)(J_\varepsilon(x)),J_\varepsilon(x)\big\rangle_2+\varepsilon\big|(\nu-L)(\Psi_\lambda+\lambda I)(J_\varepsilon(x))\big|^2_2.
\end{eqnarray}
\end{lemma}

\begin{proof}
Recall from \eqref{eqnarray18} that
\begin{eqnarray*}
J_\varepsilon(x)+\varepsilon(\nu-L)\big((\Psi_\lambda+\lambda I)(J_\varepsilon(x))\big)=x,~~\forall x\in F_{1,2}.
\end{eqnarray*}
For $x\in F^*_{1,2}$, to prove \eqref{eqna16}, we rewrite
\begin{eqnarray*}
&&\!\!\!\!\!\!\!\!\big\langle(\nu-L)((\Psi_\lambda+\lambda I)(J_\varepsilon(x))),
x\big\rangle_{F^*_{1,2,\nu}}\nonumber\\
=&&\!\!\!\!\!\!\!\!\big\langle(\nu-L)((\Psi_\lambda+\lambda I)(J_\varepsilon(x))),
J_\varepsilon(x)\big\rangle_{F^*_{1,2,\nu}}\nonumber\\
&&\!\!\!\!\!\!\!\!+\big\langle(\nu-L)((\Psi_\lambda+\lambda I)(J_\varepsilon(x))),
\varepsilon(\nu-L)\big((\Psi_\lambda+\lambda I)(J_\varepsilon(x))\big)\big\rangle_{F^*_{1,2,\nu}}\nonumber\\
=&&\!\!\!\!\!\!\!\!\big\langle(\Psi_\lambda+\lambda I)(J_\varepsilon(x)),J_\varepsilon(x\big\rangle_2+\varepsilon\|(\nu-L)((\Psi_\lambda+\lambda I)(J_\varepsilon(x)))\|^2_{F^*_{1,2,\nu}}.
\end{eqnarray*}
The proof of \eqref{eqna17} is analogous due to the fact that
$J_\varepsilon$ is $\frac{1}{\sqrt{\nu\varepsilon\lambda}}$-Lipschitz in
$L^2(\mu)$, so $A^{\nu,\varepsilon}_\lambda\in L^2(\mu)$ if $x\in
L^2(\mu)$.
\end{proof}

\begin{lemma}\label{lemma7.2}
For each $x\in L^2(\mu)$ and $\varepsilon\in(0,1)$, there exists
$C_3>0$ such that $\forall\nu\in(0,1]$, $\lambda\in(0,+\infty)$, $t\in[0,T]$,
\begin{eqnarray}\label{eqna18}
\mathbb{E}|X^{\nu,\varepsilon}_\lambda(t)|^2_2+2\mathbb{E}\int_0^t\big\langle(\nu-L)(\Psi_\lambda+\lambda I)(J_\varepsilon(X^{\nu,\varepsilon}_\lambda(s))),J_\varepsilon(X^{\nu,\varepsilon}_\lambda(s))\big\rangle_2ds\leq
e^{C_3T}|x|^2_2.
\end{eqnarray}
\end{lemma}
\begin{proof}
Applying It\^{o}'s formula to $|X^{\nu,\varepsilon}_\lambda|^2_2$,
we obtain
\begin{eqnarray*}
&&\!\!\!\!\!\!\!\!d|X^{\nu,\varepsilon}_\lambda(t)|^2_2+2\big\langle(\nu-L)((\Psi_\lambda+\lambda I)(J_\varepsilon(X^{\nu,\varepsilon}_\lambda(t)))),(X^{\nu,\varepsilon}_\lambda(t)\big\rangle_2dt\nonumber\\
=&&\!\!\!\!\!\!\!\!\|B(t,X^{\nu,\varepsilon}_\lambda(t))\|_{L_2(L^2(\mu),L^2(\mu))}^2dt+2\big\langle
X^{\nu,\varepsilon}_\lambda,B(t,X^{\nu,\varepsilon}_\lambda(t))dW(t)\big\rangle_2
\end{eqnarray*}
which by \eqref{eqna17} yields,
\begin{eqnarray*}
&&\!\!\!\!\!\!\!\!d|X^{\nu,\varepsilon}_\lambda(t)|^2_2+2\big\langle(\nu-L)((\Psi_\lambda+\lambda I)(J_\varepsilon(X^{\nu,\varepsilon}_\lambda(t)))),
J_\varepsilon(X^{\nu,\varepsilon}_\lambda(t))\big\rangle_2dt\nonumber\\
&&\!\!\!\!\!\!\!\!+2\varepsilon\big|(\nu-L)((\Psi_\lambda+\lambda I)(J_\varepsilon(X^{\nu,\varepsilon}_\lambda(t))))\big|^2_2dt\nonumber\\
=&&\!\!\!\!\!\!\!\!\|B(t,X^{\nu,\varepsilon}_\lambda(t))\|_{L_2(L^2(\mu),L^2(\mu))}^2dt+2\big\langle
X^{\nu,\varepsilon}_\lambda,B(t,X^{\nu,\varepsilon}_\lambda(t))dW(t)\big\rangle_2.
\end{eqnarray*}
Taking expectation of both sides, by \textbf{(H3)(i)} we get
\begin{eqnarray*}
&&\!\!\!\!\!\!\!\!\mathbb{E}|X^{\nu,\varepsilon}_\lambda(t)|^2_2+2\mathbb{E}\int_0^t\big\langle(\nu-L)((\Psi_\lambda+\lambda I)(J_\varepsilon(X^{\nu,\varepsilon}_\lambda(s))))
,J_\varepsilon(X^{\nu,\varepsilon}_\lambda(s))\big\rangle_2ds\nonumber\\
&&\!\!\!\!\!\!\!\!+2\varepsilon\mathbb{E}\int_0^t\big|(\nu-L)((\Psi_\lambda+\lambda I)(J_\varepsilon(X^{\nu,\varepsilon}_\lambda(s))))\big|_2^2ds\nonumber\\
\leq&&\!\!\!\!\!\!\!\!|x|^2_2+C_3\mathbb{E}\int_0^t|X^{\nu,\varepsilon}_\lambda(s)|^2_2ds.
\end{eqnarray*}
Then by \eqref{eqnarray22} and Gronwall's lemma we get \eqref{eqna18} as claimed.
\end{proof}

\begin{proposition}\label{Proposition7.1}
For $x\in L^2(\mu)$, $t\in[0,T]$, $\varepsilon\in(0,1)$ and $\nu\in(0,1]$, we have
\begin{eqnarray}\label{eqna19}
\mathbb{E}\int_0^t\big\|(\nu-L)\big((\Psi_\lambda+\lambda I)(J_\varepsilon(X^{\nu,\varepsilon}_\lambda(s)))\big)\big\|^2_{F^*_{1,2,\nu}}ds\leq
\frac{1}{2}(\frac{1}{\lambda}+\lambda+C_5)e^{C_3T}|x|^2_2.
\end{eqnarray}
\end{proposition}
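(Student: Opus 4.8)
The plan is to establish a pointwise (in $\omega$ and $s$) inequality and then integrate, invoking the preceding lemma \eref{eqna18}. Fix $x\in L^2(\mu)$, set $y:=J_\varepsilon(x)\in F_{1,2}$ and $w:=(\Psi_\lambda+\lambda I)(y)$. Since $\Psi_\lambda$ is $\frac1\lambda$-Lipschitz, monotone and vanishes at $0$, the function $\varphi:=\Psi_\lambda+\lambda I$ is non-decreasing and Lipschitz with $\varphi(0)=0$ and $\varphi'\le\frac1\lambda+\lambda$ a.e.; in particular $w=\varphi(y)\in F_{1,2}$, and by the remark following \eref{eqnarray18} we have $w\in D(L)$ with $(\nu-L)w\in L^2(\mu)$, precisely because $x\in L^2(\mu)$. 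I would first record the two identities
\[
\|(\nu-L)w\|^2_{F^*_{1,2,\nu}}=\mathscr{E}(w,w)+\nu|w|_2^2,\qquad \big\langle(\nu-L)w,y\big\rangle_2=\mathscr{E}(w,y)+\nu\langle w,y\rangle_2,
\]
the first from the definition of $\|\cdot\|_{F^*_{1,2,\nu}}$ together with $(\nu-L)^{-1}(\nu-L)w=w$ and the Gelfand-triple relation \eref{eqnarray48} (applicable since $(\nu-L)w\in L^2(\mu)$ and $w\in F_{1,2}$), the second from $-\langle Lw,y\rangle_2=\mathscr{E}(w,y)$.

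The heart of the argument is to dominate each summand on the right of the first identity by the corresponding summand of the second. For the form part I would invoke \textbf{(H4)}: applying \textbf{(H4)(ii)} with $\varphi$ gives $\Gamma(\varphi(y),\varphi(y))\le C_5\,\Gamma(y,\varphi(y))$, and integrating and using the polarised form of \textbf{(H4)(i)} yields $\mathscr{E}(w,w)\le C_5\,\mathscr{E}(w,y)$. For the $L^2$ part I would use the elementary pointwise inequality $|\varphi(y)|^2\le(\frac1\lambda+\lambda)\,\varphi(y)\,y$, valid because $\varphi$ is $(\frac1\lambda+\lambda)$-Lipschitz through the origin (so $|\varphi(y)|\le(\frac1\lambda+\lambda)|y|$) and increasing (so $\varphi(y)y=|\varphi(y)||y|\ge0$); integrating gives $\nu|w|_2^2\le\nu(\frac1\lambda+\lambda)\langle w,y\rangle_2$. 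Since both $\mathscr{E}(w,y)\ge0$ and $\langle w,y\rangle_2\ge0$, adding the two bounds term by term yields the pointwise estimate
\[
\|(\nu-L)w\|^2_{F^*_{1,2,\nu}}\le\Big(\tfrac1\lambda+\lambda+C_5\Big)\big\langle(\nu-L)w,y\big\rangle_2.
\]

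Finally I would substitute $x=X^{\nu,\varepsilon}_\lambda(s)$ (which lies in $L^2(\mu)$, $\mathbb{P}$-a.s., since $x\in L^2(\mu)$), integrate in $s$ over $[0,t]$ and take expectations. The right-hand side is then exactly $(\frac1\lambda+\lambda+C_5)$ times the integral appearing in \eref{eqna18}; discarding the non-negative term $\mathbb{E}|X^{\nu,\varepsilon}_\lambda(t)|_2^2$ there bounds $\mathbb{E}\int_0^t\langle(\nu-L)w,y\rangle_2\,ds$ by $\tfrac12 e^{C_3T}|x|_2^2$, which produces the claimed estimate $\tfrac12(\frac1\lambda+\lambda+C_5)e^{C_3T}|x|_2^2$. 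The delicate points to check are that $w=\varphi(y)\in F_{1,2}$ (closedness of the Dirichlet form under composition with Lipschitz functions vanishing at $0$) and the legitimacy of the two displayed identities, i.e. that $(\nu-L)w$ genuinely lies in $L^2(\mu)$; both are guaranteed by $x\in L^2(\mu)$ via \eref{eqnarray18}. I expect the implication \textbf{(H4)(ii)}$\,\Rightarrow\,\mathscr{E}(w,w)\le C_5\mathscr{E}(w,y)$ to be the conceptual crux, since this is exactly where the square-field condition substitutes for the chain rule that would be available only in the local case.
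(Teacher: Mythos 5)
Your proposal is correct and follows essentially the same route as the paper's own proof: the identity $\|(\nu-L)w\|^2_{F^*_{1,2,\nu}}=\mathscr{E}(w,w)+\nu|w|_2^2$, the bound $\mathscr{E}(w,w)\le C_5\,\mathscr{E}(y,w)$ via \textbf{(H4)(i)}--\textbf{(ii)}, the elementary bound $\nu|w|_2^2\le\nu(\tfrac1\lambda+\lambda)\langle w,y\rangle_2$ from the Lipschitz and sign properties of $\Psi_\lambda+\lambda I$, the conversion to $\langle(\nu-L)w,y\rangle_2$ using $w\in D(L)$, and finally Lemma 7.2 (\ref{eqna18}). The only cosmetic difference is that you state the polarised form of \textbf{(H4)(i)} and the nonnegativity of both summands explicitly, which the paper leaves implicit.
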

\begin{proof}
Let $x\in L^2(\mu)$. Then
\begin{eqnarray*}
&&\!\!\!\!\!\!\!\!\big\|(\nu-L)\big((\Psi_\lambda+\lambda I)(J_\varepsilon(x))\big)\big\|^2_{F^*_{1,2,\nu}}\nonumber\\
=&&\!\!\!\!\!\!\!\!\|(\Psi_\lambda+\lambda I)(J_\varepsilon(x))\|^2_{F_{1,2,\nu}}\nonumber\\
=&&\!\!\!\!\!\!\!\!\int\frac{1}{2}\Gamma\big((\Psi_\lambda+\lambda I)(J_\varepsilon(x)),(\Psi_\lambda+\lambda I)(J_\varepsilon(x))\big)d\mu\nonumber\\
&&\!\!\!\!\!\!\!\!+\nu\big\langle(\Psi_\lambda+\lambda I)(J_\varepsilon(x)),(\Psi_\lambda+\lambda I)(J_\varepsilon(x))\big\rangle_2\nonumber\\
\leq&&\!\!\!\!\!\!\!\!C_5\int\frac{1}{2}\Gamma\big(J_\varepsilon(x),(\Psi_\lambda+\lambda I)(J_\varepsilon(x))\big)d\mu+\nu(\frac{1}{\lambda}+\lambda)\langle(\Psi_\lambda+\lambda I)(J_\varepsilon(x)),J_\varepsilon(x)\rangle_2\nonumber\\
\leq&&\!\!\!\!\!\!\!\!(\frac{1}{\lambda}+\lambda+C_5)\langle
J_\varepsilon(x),(\Psi_\lambda+\lambda I)(J_\varepsilon(x))\rangle_{F_{1,2,\nu}}\nonumber\\
=&&\!\!\!\!\!\!\!\!(\frac{1}{\lambda}+\lambda+C_5)\langle(\nu-L)(\Psi_\lambda+\lambda I)(J_\varepsilon(x)),J_\varepsilon(x)\rangle_2,
\end{eqnarray*}
where in the first inequality we used \textbf{(H4)}, the fact that $r(\Psi_\lambda(r)+\lambda r)\geq0$ for all $r\in \Bbb{R}$ and $\Psi_\lambda$ is $\frac{1}{\lambda}$-Lipschitz (\cite[Page:41, Proposition 2.3 (ii)]{B}), the last equality comes from the fact that $(\Psi_\lambda+\lambda I)(J_\varepsilon(x))\in D(L)$. Now from \eqref{eqna18}, we get the assertion.
\end{proof}

\subsection{The $L^p$-It\^o formula in expectation}\label{itoformula}
The purpose in this section is to prove Theorem \ref{ito} below, which has been used in Lemmas \ref{lemma4} and \ref{lemma5}.

Let $\ell_2$ be the space of all square-summable sequences in $\Bbb{R}$ and $p\in[2,\infty)$. In addition, to the real-valued $L^p$-space, $L^p(\mu):=L^p(E,\mu)$ we consider the $\ell_2$-valued $L^p$-space $L^p(\mu;\ell_2):=L^p(E,\mu;\ell_2)$. We set
$$|g|_p^p:=|g|^p_{L^p(\mu;\ell_2)}=\int_E\|g(x)\|^p_{\ell_2}\mu(dx)=\int_E\Big(\sum_{k=1}^\infty|g_k(x)|^2\Big)^{\frac{p}{2}}\mu(dx).$$

Let $\mathscr{P}$ denote the predictable $\sigma$-algebra on $[0,T]\times\Omega$ corresponding to $(\Omega,\mathscr{F},(\mathscr{F}_t)_{t\geq0},\Bbb{P})$. For $p\in [2,\infty)$ we set
\begin{eqnarray*}
\Bbb{L}^p(T):=L^p([0,T]\times\Omega,\mathscr{P};L^p(\mu))
\end{eqnarray*}
and
\begin{eqnarray*}
\Bbb{L}^p(T;\ell_2):=L^p([0,T]\times\Omega,\mathscr{P};L^p(\mu;\ell_2)),
\end{eqnarray*}
equipped with its standard $L^p$-norms. Since $(E,\mathcal{B})$ is a standard measurable space, by definition there exists a complete metric $d$ on $E$, such that $(E,d)$ is separable, i.e., a Polish space, whose Borel $\sigma$-algebra coincides with $\mathcal{B}$. Below we fix this metric $d$ and denote the corresponding set of all bounded continuous functions by $C_b(E)$.

Let $\mathcal{E}$ be all $g=(g_k)_{k\in\Bbb{N}}\in L^\infty([0,T]\times\Omega;L^\infty(\mu;\ell_2)\cap L^1(\mu;\ell_2))$ such that there exists $j\in\Bbb{N}$ and bounded stopping times $\tau_0\leq\tau_1\leq\cdots\leq\tau_j\leq T$ such that
\begin{equation*}
    g_k=\left\{
      \begin{array}{ll}
        \sum_{i=1}^jg_k^i1_{(\tau_{i-1},\tau_i]}, & \hbox{if~$k\leq j$;} \\
        0, & \hbox{if $k>j$,}
      \end{array}
    \right.
\end{equation*}
where $g_k^i\in C_b(E)\cap L^1(\mu)$, $1\leq i\leq j$.

\begin{claim}\label{claim7.1}
$\mathcal{E}$ is dense in $\Bbb{L}^p(T;\ell_2)$ for all $p\in[2,\infty)$.
\end{claim}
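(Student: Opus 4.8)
The plan is to exploit that $\mathcal E$ is a linear subspace and that, by Tonelli's theorem, $\Bbb{L}^p(T;\ell_2)$ is nothing but $L^p$ of the $\sigma$-finite product space $\big([0,T]\times\Omega\times E,\ \mathscr P\otimes\mathcal B,\ (\mathrm{Leb}\otimes\Bbb P)\otimes\mu;\ell_2\big)$. First I would reduce the number of active coordinates: for $g=(g_k)\in\Bbb{L}^p(T;\ell_2)$ the truncations $g^{(N)}:=(g_1,\dots,g_N,0,0,\dots)$ converge to $g$, since $\|g-g^{(N)}\|_{\ell_2}^p\le\|g\|_{\ell_2}^p\in L^1$ and the tails vanish pointwise, so dominated convergence applies. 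On finitely many coordinates the elementary bound $\|a\|_{\ell_2}\le\sum_{k\le N}|a_k|$ together with Minkowski's inequality shows that it suffices to approximate each scalar component in $\Bbb{L}^p(T):=L^p([0,T]\times\Omega;L^p(\mu))$; taking a common refinement of the finitely many stopping-time partitions that then arise yields a single element of $\mathcal E$.

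This makes the problem scalar. Since simple functions are dense in $L^p$ of a $\sigma$-finite measure space (for $p<\infty$), and the measurable rectangles $P\times F$ with $P\in\mathscr P$ and $\mu(F)<\infty$ generate $\mathscr P\otimes\mathcal B$, I would reduce further to approximating a single indicator $1_{P\times F}=1_P(t,\omega)\,1_F(x)$, of finite product measure because $(\mathrm{Leb}\otimes\Bbb P)(P)\le T<\infty$. This factorizes the task into a temporal part (approximating $1_P$, $P\in\mathscr P$, in $L^p([0,T]\times\Omega)$) and a spatial part (approximating $1_F$, $\mu(F)<\infty$, in $L^p(\mu)$ by functions in $C_b(E)\cap L^1(\mu)$); the product of the two approximants has exactly the tensor form prescribed by \eref{eq:7.8}.

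For the temporal part the key observation is that each predictable generator is already a stochastic interval: for $s<t$ and $A\in\mathscr F_s$ the map $\sigma:=s\,1_A+t\,1_{A^c}$ is a bounded stopping time, and the set $\{(r,\omega):\sigma(\omega)<r\le t\}$ equals $(s,t]\times A$. Moreover $\{0\}\times A_0$ is $\mathrm{Leb}$-null, hence zero in $L^p$. Since finite disjoint unions of such rectangles form an algebra generating $\mathscr P$, a monotone-class argument gives that $1_P$ is approximable in $L^p$ by finite sums $\sum_i c_i\,1_{\{\sigma_i<r\le\tau_i\}}$, which, after ordering the endpoints, have the required form $\sum_i c_i\,1_{(\tau_{i-1},\tau_i]}$ with $\tau_0\le\cdots\le\tau_j\le T$ bounded stopping times.

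The hard part will be the spatial approximation, i.e.\ the density of $C_b(E)\cap L^1(\mu)$ in $L^p(\mu)$ for the fixed Polish metric $d$. I would proceed through regularity: since $\mu(F)<\infty$, the restriction of $\mu$ to $F$ is a finite Borel measure on a Polish space, hence tight and inner regular by closed (indeed compact) sets, so one may choose closed $C\subseteq F$ with $\mu(F\setminus C)<\varepsilon$, whence $\|1_F-1_C\|_{L^p(\mu)}^p<\varepsilon$. One then builds a bounded Lipschitz $\phi$ with $\phi\equiv1$ on $C$, $0\le\phi\le1$, supported in a neighborhood of $C$, shrinking the neighborhood so that $\|1_C-\phi\|_{L^p(\mu)}$ is small while using $\sigma$-finiteness of $\mu$ to keep $\phi\in L^1(\mu)$. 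I expect this to be the genuinely delicate point, since on a general $\sigma$-finite Polish space a neighborhood of a compact set need not have finite measure and continuous functions cannot always separate heavily weighted accumulating atoms; here the freedom to fix a compatible Polish metric (equivalently, to refine the topology so that the relevant Borel sets become well separated while preserving $\mathcal B$) is what makes the construction go through. Once the scalar spatial and temporal approximations are available, multiplying them and collecting coordinates over a common stopping-time partition produces an element of $\mathcal E$ within the desired $L^p$-distance, which completes the proof.
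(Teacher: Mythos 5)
Your proposal is correct in substance, but it takes a genuinely different route from the paper. The paper proves the claim by duality: it takes $f=(f_k)\in\Bbb{L}^q(T;\ell_2)$, $q=\frac{p}{p-1}$, annihilating $\mathcal{E}$, tests it only against elements of $\mathcal{E}$ supported in a single coordinate $k$ and a single stochastic interval, namely $g_k=g_k^k1_{(\sigma,\tau]}$ with $g_k^k\in C_b(E)\cap L^1(\mu)$, uses that such intervals generate $\mathscr{P}$ and that $f_k$ is predictable to get $\int_E f_kg_k^k\,d\mu=0$ $dt\otimes\Bbb{P}$-a.e., concludes $f_k=0$ from the density of $C_b(E)\cap L^1(\mu)$ in $L^p(\mu)$, and finishes with Hahn--Banach. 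This buys two things your constructive argument has to pay for: it never needs to assemble several approximants into a single element of $\mathcal{E}$ (so no common refinement of stopping-time partitions is ever required), and it replaces the monotone-class/product-rectangle reductions by the single remark that stochastic intervals generate $\mathscr{P}$. Conversely, your argument is more elementary in that it does not invoke the representation of the dual of the vector-valued space $\Bbb{L}^p(T;\ell_2)$ by $\Bbb{L}^q(T;\ell_2)$ (which the paper uses tacitly through Hahn--Banach), and it produces explicit approximants.

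Two of your steps need to be fleshed out. (i) ``Taking a common refinement of the finitely many stopping-time partitions'' is not automatic: membership in $\mathcal{E}$ requires \emph{deterministic} coefficients on one increasing chain $\tau_0\le\cdots\le\tau_j$, and for general partitions the order in which intervals from different partitions occur is $\omega$-dependent, so which refinement intervals constitute a given original interval is random. In your construction this is harmless only because every temporal approximant you produce comes from rectangles $(s,t]\times A$ and hence lives over a deterministic grid $0=u_0<\cdots<u_M=T$, with intervals of the form $(u_{n-1}1_B+u_n1_{B^c},\,u_n]$, $B\in\mathscr{F}_{u_{n-1}}$; over a common refinement of the deterministic grids, partitioning $\Omega$ in each cell $(u_{n-1},u_n]$ by the finitely many relevant $\mathscr{F}_{u_{n-1}}$-sets and listing the resulting rectangles cell by cell does yield an increasing chain of stopping times with deterministic coefficients. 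This should be said explicitly. (ii) You are right that the density of $C_b(E)\cap L^1(\mu)$ in $L^p(\mu)$ is the genuinely delicate point, and right about why: it can fail for an arbitrary compatible metric when $\mu$ is only $\sigma$-finite --- for $E=\Bbb{R}$ with the Euclidean metric and $\mu=\sum_{q\in\Bbb{Q}}\delta_q$ one has $C_b(E)\cap L^1(\mu)=\{0\}$ --- so the metric must be chosen adapted to $\mu$, e.g.\ by refining the Polish topology (preserving $\mathcal{B}$) so that the members of a decomposition $E=\bigcup_nE_n$, $\mu(E_n)<\infty$, become clopen; then a compact $C\subseteq E_n$ has positive distance to $E_n^c$, your Lipschitz cutoffs are eventually supported in $E_n$, and dominated convergence applies. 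Note that this issue is not specific to your route: the paper's own proof ends with exactly the same density assertion, stated without proof for the metric fixed at the start of the Appendix, so your diagnosis points at a gap in the paper's argument as well.
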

\begin{proof}
Let $f=(f_k)_{k\in\Bbb{N}}\in L^q(T;\ell_2)$, with $q:=\frac{p}{p-1}$, be such that
\begin{eqnarray*}
_{\Bbb{L}^q(T;\ell_2)}\langle f,g\rangle_{\Bbb{L}^p(T;\ell_2)}=\Bbb{E}\int_{0}^{T}\int_E\sum_{k=1}^{\infty}f_kg_kd\mu ds=0 ~~\forall g\in\mathcal{E}.
\end{eqnarray*}
Now let $\sigma\leq\tau$ be two stopping times and $k\in\Bbb{N}$. Define $g\in\Bbb{L}^p(T;\ell_2)$ by $g=(g_k\delta_{ik})_{i\in\Bbb{N}}$, where
$$g_k:=g_k^kI_{(\sigma,\tau]}$$
and $g_k^k\in C_b(E)\cap L^1(\mu)$. Then $g\in\mathcal{E}$, hence
\begin{eqnarray*}
 0=&&\!\!\!\!\!\!\!\!_{\Bbb{L}^q(T;\ell_2)}\langle f,g\rangle_{\Bbb{L}^p(T;\ell_2)}\nonumber\\
  =&&\!\!\!\!\!\!\!\!\Bbb{E}\int_0^T\int_Ef_kg_k^kd\mu \ I_{(\sigma,\tau]}(t)dt,
\end{eqnarray*}
which implies that
\begin{eqnarray*}
\int_Ef_kg_k^kd\mu=0~~dt\otimes\Bbb{P}-a.s.,
\end{eqnarray*}
since all sets of the type $(\sigma,\tau]$ generate the $\sigma$-algebra $\mathscr{P}$ and since $f_k$ is $\mathscr{P}$-measurable. Therefore, since $C_b(E)\cap L^1(\mu)$ is dense in $L^p(\mu)$,
\begin{eqnarray*}
f_k=0~~\text{in}~~L^q(\mu)~~dt\otimes\Bbb{P}-a.s.,~~\text{for~all}~~k\in\Bbb{N}.
\end{eqnarray*}
Now the assertion follows by the Hahn-Banach theorem (\cite[page: 61, Corollary 4.23]{S}).
\end{proof}

\begin{remark}\label{remark7.1}
Let $\mathcal{S}$ be the set of all functions $f\in L^\infty([0,T]\otimes\Omega;L^\infty(\mu)\cap L^1(\mu))$ such that there exist $l\in\Bbb{N}$ and bounded stopping times $\tau'_0\leq\tau'_1\leq\cdots\leq\tau'_l\leq T$ such that $f=\sum_{i=1}^lf^i1_{(\tau'_{i-1},\tau'_i]}$, where $f^i\in C_b(E)\cap L^1(\mu)$, $1\leq i\leq l$. Similarly to Claim \ref{claim7.1}, one can prove that $\mathcal{S}$ is dense in $\Bbb{L}^p(T)$ for all $p\in[2,\infty)$.
\end{remark}

Define $\mathbf{M}:\mathcal{E}\longmapsto \bigcap_{p\geq1}L^p(\Omega;C([0,T];L^p(\mu)))$ as follows:
\begin{eqnarray}\label{eq:7.9}
\mathbf{M}(g)(t)=\int_0^tgdW(s)&&\!\!\!\!\!\!:=\sum_{k=1}^{\infty}\int_{0}^{t}g_kdW_k(s)\nonumber\\
&&\!\!\!\!\!\!=\sum_{i,k=1}^{j}g_k^i\big(W_k(t\wedge\tau_i)-W_k(t\wedge\tau_{i-1})\big),~t\in[0,T],~g\in\mathcal{E}.
\end{eqnarray}
Let us note that the right hand-side of \eqref{eq:7.9} is $\Bbb{P}$-a.s. for every $t\in[0,T]$ a continuous $\mu$-version of $\mathbf{M}(g)(t)\in L^p(E,\mu)$, which for every $x\in E$ is a continuous real-valued martingale and is equal to
\begin{eqnarray}\label{eq:7.7}
\sum_{k=1}^\infty\int_0^tg_k(s,x)dW_k(s),~x\in E, ~t\in[0,T].
\end{eqnarray}

\begin{claim}\label{claim7.2}
Let $p\in [2,\infty)$. Then $\mathbf{M}$ extends to a linear continuous map $\overline{\mathbf{M}}$ from $\Bbb{L}^p(T;\ell_2)$ to $L^p(\Omega;C([0,T];L^p(\mu)))$, such that $\overline{\mathbf{M}}(g)$ is a continuous martingale in $L^p(\mu)$ for all $g\in\Bbb{L}^p(T;\ell_2)$.
\end{claim}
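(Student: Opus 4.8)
The plan is to establish one a priori bound of Burkholder--Davis--Gundy (BDG) type on the elementary class $\mathcal{E}$ and then extend $\mathbf{M}$ by density. Concretely, I would prove that there is a constant $C_{p,T}\in(0,\infty)$ with
\[
\mathbb{E}\sup_{t\in[0,T]}|\mathbf{M}(g)(t)|_{L^p(\mu)}^p\le C_{p,T}\,\mathbb{E}\int_0^T|g(s)|_p^p\,ds=C_{p,T}\,\|g\|_{\Bbb{L}^p(T;\ell_2)}^p,\qquad g\in\mathcal{E},
\]
and then use the density of $\mathcal{E}$ in $\Bbb{L}^p(T;\ell_2)$ (Claim 7.1) together with the completeness of the Banach space $L^p(\Omega;C([0,T];L^p(\mu)))$ to get the unique continuous linear extension $\overline{\mathbf{M}}$.

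First I would exploit the pointwise representation \eref{eq:7.7}: for $g\in\mathcal{E}$ and $\mu$-a.e.\ $x$, the real process $t\mapsto\mathbf{M}(g)(t)(x)=\sum_k\int_0^t g_k(s,x)\,dW_k(s)$ is a continuous martingale with quadratic variation $\int_0^t\|g(s,x)\|_{\ell_2}^2\,ds$. Applying the scalar BDG inequality fiberwise in $x$ and integrating against $\mu$ by Tonelli gives
\[
\mathbb{E}\int_E\sup_{t\le T}|\mathbf{M}(g)(t)(x)|^p\,\mu(dx)\le C_p\,\mathbb{E}\int_E\Big(\int_0^T\|g(s,x)\|_{\ell_2}^2\,ds\Big)^{p/2}\mu(dx).
\]
To convert the right-hand side into the $\Bbb{L}^p(T;\ell_2)$-norm I would apply Minkowski's integral inequality in the $L^{p/2}(\mu)$-norm (valid since $p/2\ge1$) to pull the time integral out, obtaining the bound $\big(\int_0^T|g(s)|_p^2\,ds\big)^{p/2}$, and then H\"older (or Jensen) in $s$ to dominate this by $T^{p/2-1}\int_0^T|g(s)|_p^p\,ds$. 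Since $\sup_t\int_E(\cdots)\le\int_E\sup_t(\cdots)$, the left-hand side above dominates $\mathbb{E}\sup_{t\le T}|\mathbf{M}(g)(t)|_{L^p(\mu)}^p$, which yields the claimed estimate.

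With the estimate in hand the extension is routine. The map $\mathbf{M}$ is linear and, by the estimate, continuous from $(\mathcal{E},\|\cdot\|_{\Bbb{L}^p(T;\ell_2)})$ into $L^p(\Omega;C([0,T];L^p(\mu)))$; by Claim 7.1 the domain is dense, so there is a unique continuous linear extension $\overline{\mathbf{M}}$ to all of $\Bbb{L}^p(T;\ell_2)$. To see that $\overline{\mathbf{M}}(g)$ is a continuous $L^p(\mu)$-martingale for every such $g$, I would choose $g_n\in\mathcal{E}$ with $g_n\to g$ in $\Bbb{L}^p(T;\ell_2)$; then $\overline{\mathbf{M}}(g_n)\to\overline{\mathbf{M}}(g)$ in $L^p(\Omega;C([0,T];L^p(\mu)))$, hence $\overline{\mathbf{M}}(g_n)(t)\to\overline{\mathbf{M}}(g)(t)$ in $L^p(\Omega;L^p(\mu))$ for each $t$. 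Since each $\mathbf{M}(g_n)$ is a martingale and conditional expectation is an $L^p$-contraction, the martingale identity passes to the limit, while $L^p(\mu)$-path-continuity is inherited from membership in $C([0,T];L^p(\mu))$.

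The hard part will be the a priori estimate, and within it the exchange of the time integration with the $L^{p/2}(\mu)$-norm via Minkowski's integral inequality; the subtlety is that BDG is only directly available fiberwise in $x$, so one must ensure that \eref{eq:7.7} provides a jointly measurable version to which Tonelli applies. This is precisely where the explicit structure of $g\in\mathcal{E}$ (finite sums of $C_b(E)\cap L^1(\mu)$ functions times increments of the $W_k$) and the standard-measurable-space assumption on $(E,\mathcal{B})$ enter.
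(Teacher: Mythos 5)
Your proposal is correct and follows essentially the same route as the paper: the a priori bound is derived exactly as in the paper's proof (exchange $\sup_t$ with $\int_E$, apply the scalar BDG inequality fiberwise in $x$, then Minkowski's integral inequality to pull the time integral out of the $L^{p/2}(\mu)$-norm, and finally H\"older in time), after which the extension follows from the density of $\mathcal{E}$ (Claim 7.1) and completeness of the target space. The only cosmetic difference is in the martingale property of the limit: you pass the identity through the limit using vector-valued conditional expectation as an $L^p$-contraction, whereas the paper reduces to real-valued martingales by pairing with $f\in L^q(\mu)$, $q=\frac{p}{p-1}$; both arguments are routine and equivalent.
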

\begin{proof}
We have
\begin{eqnarray*}\label{eq:7.10}
&&\!\!\!\!\!\!\!\!\Bbb{E}\Big[\sup_{t\in[0,T]}\int_E\big|\int_0^tgdW(s)\big|^pd\mu\Big]\nonumber\\
=&&\!\!\!\!\!\!\!\!\Bbb{E}\Big[\sup_{t\in[0,T]}\int_E\Big|\sum_{k=1}^{\infty}\int_{0}^{t}g_{k}(s,x)dW_k(s)\Big|^pd\mu\Big]\nonumber\\
\leq&&\!\!\!\!\!\!\!\!\int_E\Big[\Bbb{E}\sup_{t\in[0,T]}\Big|\sum_{k=1}^{\infty}\int_{0}^{t}g_{k}(s,x)dW_k(s)\Big|^p\Big]d\mu\nonumber\\
\leq&&\!\!\!\!\!\!\!\!c_p\int_E\Big[\Bbb{E}\Big\langle\sum_{k=1}^{\infty}\int_0^\cdot~g_{k}(s,x)dW_k(s)\Big\rangle^{\frac{p}{2}}_T\Big]d\mu\nonumber\\
=&&\!\!\!\!\!\!\!\!c_p\int_E\Bbb{E}\Big(\sum_{k=1}^{\infty}\int_0^Tg^2_{k}(s,x)ds\Big)^{\frac{p}{2}}d\mu\nonumber\\
=&&\!\!\!\!\!\!\!\!c_p\Bbb{E}\Bigg[\int_E\Big(\int_{0}^{T}|g(s,x)|^2_{\ell_2}ds\Big)^{\frac{p}{2}}d\mu\Bigg]^{\frac{2}{p}\cdot\frac{p}{2}}\nonumber\\
\leq&&\!\!\!\!\!\!\!\!c_p\Bbb{E}\Bigg[\int_{0}^{T}\Big(\int_E|g(s,x)|^p_{\ell_2}d\mu\Big)^{\frac{2}{p}}ds\Bigg]^{\frac{p}{2}}\nonumber\\
\leq&&\!\!\!\!\!\!\!\!c_pT^{\frac{p}{2}-1}\Bbb{E}\int_{0}^{T}\big|g(s,\cdot)\big|^p_{L^p(\mu;\ell_2)}ds,
\end{eqnarray*}
where we have used the BDG inequality applied to the real-valued martingale in \eqref{eq:7.7} in the third step, the assumption that $p\geq2$ and Minkowski's inequality in the sixth step and H\"{o}lder's inequality in the last step. Hence the first part of the assertion follows.

To prove the second let $g\in\Bbb{L}^p(T;\ell_2)$. It suffices to prove that for all $f\in L^q(\mu)$ with $q:=\frac{p}{p-1}$,
\begin{eqnarray*}
\int_Ef~\overline{\mathbf{M}}(g)(t)d\mu,~t\in[0,T],
\end{eqnarray*}
is a real-valued martingale (see e.g. \cite[Remark 2.2.5]{LR}). But since for some $g_n\in\mathcal{E}$, $n\in \Bbb{N}$, we have $\forall~t\in[0,T]$ that
\begin{eqnarray*}
\mathbf{M}(g_n)(t)~~\overrightarrow{n\rightarrow\infty}~~\overline{\mathbf{M}}(g)(t)~~\text{in}~~L^p(\Omega;L^p(\mu)),
\end{eqnarray*}
it follows that
\begin{eqnarray*}
\int_Ef~\mathbf{M}(g_n)(t)d\mu~~\overrightarrow{n\rightarrow\infty}~~\int_Ef~\overline{\mathbf{M}}(g)(t)d\mu~~\text{in}~~L^1(\Omega).
\end{eqnarray*}
So, we may assume that $g\in\mathcal{E}$. But in this case by \eqref{eq:7.9} it follows immediately that $\int_Ef~\mathbf{M}(g)(t)d\mu$, $t\in[0,T]$, is a real-valued martingale.
\end{proof}

\vspace{3mm}
Below we define for $g\in\Bbb{L}^p(T;\ell_2)$, $p\in[2,\infty)$,
\begin{eqnarray*}
\int_0^tg(s)dW(s):=\overline{\mathbf{M}}(g)(t),~~t\in[0,T],
\end{eqnarray*}
where $\overline{\mathbf{M}}$ is as in Claim \ref{claim7.2}.

Now we fix $p\in[2,\infty)$ and consider the following process
$$u:\Omega\times[0,T]\rightarrow L^p(\mu),$$
defined by
\begin{eqnarray}\label{eq:7.12}
u(t):=u(0)+\int_0^tf(s)ds+ \int_0^tg(s)dW(s),
\end{eqnarray}
where $u(0)\in L^p(\Omega,\mathcal{F}_0;L^p(\mu))$, $f\in\Bbb{L}^p(T)$ and $g\in\Bbb{L}^p(T;\ell_2)$.

\begin{theorem}\label{ito}\textbf{``It\^{o}-formula in expectation"}
Let $p\in[2,\infty)$, $f\in \Bbb{L}^p(T)$, $g\in \Bbb{L}^p(T;\ell_2)$. Let $u$ be as in \eqref{eq:7.12}. Then for all $t\in[0,T]$,
\begin{eqnarray}\label{eq:7.1}
\Bbb{E}|u(t,x)|_p^p=&&\!\!\!\!\!\!\!\!\Bbb{E}|u(0)|^p+\Bbb{E}\int_0^t\int_Ep|u(s,x)|^{p-2}u(s,x)f(s,x)\mu(dx) ds\nonumber\\
&&\!\!\!\!\!\!\!\!+\frac{1}{2}p(p-1)\Bbb{E}\int_0^t\int_E|u(s,x)|^{p-2} |g(s,x)|^2_{\ell_2}\mu(dx) ds.
\end{eqnarray}
\end{theorem}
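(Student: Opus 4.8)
The plan is to reduce the identity to a dense class of especially simple integrands, on which it can be obtained by a genuinely pointwise (in the space variable $x$) application of the classical real-valued It\^o formula, and then to pass to the limit using the density results (Claim 7.1, Remark 7.1) and the continuity of the stochastic integral established in Claim 7.2. Concretely, I would first prove \eref{eq:7.1} under the additional hypotheses that $u(0)\in L^\infty(\Omega,\mathscr F_0;L^\infty(\mu)\cap L^1(\mu))$, $f\in\mathcal S$ and $g\in\mathcal E$. For such data the explicit formulas \eref{eq:7.8}--\eref{eq:7.9} yield a jointly measurable version $u(t,x,\omega)$, continuous in $t$, such that for $\mu$-a.e.\ $x\in E$ the real-valued process $t\mapsto u(t,x)$ is a continuous It\^o process with drift $f(\cdot,x)$ and diffusion coefficients $(g_k(\cdot,x))_k$, with $u(0,x)$ being $\mathscr F_0$-measurable; here the representation \eref{eq:7.7} of $\overline{\mathbf M}(g)$ as a pointwise-in-$x$ martingale is crucial.

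Since $\phi(r):=|r|^p$ is $C^2$ for $p\geq2$, with $\phi'(r)=p|r|^{p-2}r$ and $\phi''(r)=p(p-1)|r|^{p-2}$, the classical It\^o formula applied for each fixed $x$ gives a real-valued identity for $|u(t,x)|^p$. Because $u(0)$, $f$ and $g$ are bounded and $g$ has only finitely many, bounded, nonzero components, the process $t\mapsto u(t,x)$ lies in $L^{2p-2}$ with uniformly bounded moments, so the integrand $p|u|^{p-2}u\,g_k$ of the arising stochastic integral is square-integrable and that integral is a true martingale with vanishing expectation. Taking expectations and integrating over $E$ against $\mu$, interchanging $\mathbb E$, $\int_E$ and $\int_0^t\!ds$ by Tonelli/Fubini (legitimate since the relevant integrands are integrable by boundedness), produces exactly \eref{eq:7.1} in this case.

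Next I would remove the extra hypotheses by approximation. For general $u(0)\in L^p(\Omega,\mathscr F_0;L^p(\mu))$, $f\in\mathbb L^p(T)$ and $g\in\mathbb L^p(T;\ell_2)$, I would pick bounded $\mathscr F_0$-measurable $u_n(0)\to u(0)$ in $L^p(\Omega;L^p(\mu))$ (by truncation, using $\sigma$-finiteness of $\mu$) and, by Remark 7.1 and Claim 7.1, $f_n\in\mathcal S$, $g_n\in\mathcal E$ with $f_n\to f$ in $\mathbb L^p(T)$ and $g_n\to g$ in $\mathbb L^p(T;\ell_2)$. Writing $u_n$ for the corresponding processes, the Bochner integral obeys $\sup_{t}\big\|\int_0^t(f_n-f)\,ds\big\|_{L^p(\mu)}\leq\int_0^T\|f_n-f\|_{L^p(\mu)}\,ds$, while Claim 7.2 gives $\overline{\mathbf M}(g_n)\to\overline{\mathbf M}(g)$ in $L^p(\Omega;C([0,T];L^p(\mu)))$; hence $u_n\to u$ in $L^p(\Omega;C([0,T];L^p(\mu)))$, in particular in $\mathbb L^p(T)$ and, along a subsequence, $dt\otimes\mathbb P\otimes\mu$-a.e. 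This already yields $\mathbb E|u_n(t)|_p^p\to\mathbb E|u(t)|_p^p$ and $\mathbb E|u_n(0)|^p\to\mathbb E|u(0)|^p$.

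The main obstacle is the convergence of the two drift integrals, which depend nonlinearly on $u$. For the first, H\"older's inequality with exponents $\tfrac{p}{p-1}$ and $p$ gives $\int_E p|u|^{p-2}|u|\,|f|\,d\mu\leq p\,|u|_p^{p-1}|f|_p$; since the superposition map $r\mapsto|r|^{p-2}r$ sends $L^p$-convergent sequences to $L^{p/(p-1)}$-convergent ones (by its $(p-1)$-growth together with $u_n\to u$ in $\mathbb L^p(T)$) and $f_n\to f$ in $\mathbb L^p(T)$, the products $p|u_n|^{p-2}u_n f_n$ converge to $p|u|^{p-2}u\,f$ in $L^1([0,T]\times\Omega\times E)$. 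For the second, H\"older with exponents $\tfrac{p}{p-2}$ and $\tfrac p2$ gives $\int_E|u|^{p-2}|g|_{\ell_2}^2\,d\mu\leq|u|_p^{p-2}|g|_p^2$, and since $|u_n|^{p-2}\to|u|^{p-2}$ in $L^{p/(p-2)}$ while $|g_n|_{\ell_2}^2\to|g|_{\ell_2}^2$ in $L^{p/2}$, the integrands converge in $L^1$ as well (the case $p=2$ being simpler, the factor $|u|^{p-2}$ being absent). Passing to the limit in the identity for the approximants then gives \eref{eq:7.1} in general. I expect these $L^1$-convergences, together with the bookkeeping ensuring that the stochastic-integral term genuinely has zero expectation \emph{before} the limit is taken, to be the only delicate points; everything else is a routine consequence of the classical It\^o formula and of the density and continuity results already proved.
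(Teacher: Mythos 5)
Your proposal is correct and follows essentially the same route as the paper's proof: approximate $f$ and $g$ by the simple predictable integrands of $\mathcal S$ and $\mathcal E$ (Claim 7.1, Remark 7.1), apply the classical real-valued It\^o formula pointwise in $x$ and take expectations so the stochastic integral drops out, then pass to the limit using Claim 7.2 together with H\"older and the $L^{p/(p-1)}$- resp. $L^{p/(p-2)}$-convergence of $|u_n|^{p-2}u_n$ and $|u_n|^{p-2}$ (which the paper obtains via its Lemma 7.3). Your additional truncation of the initial datum $u(0)$ is a sensible refinement rather than a different method: it guarantees enough moments for the stochastic-integral term to be a true martingale before the limit, a point the paper handles only implicitly.
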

\begin{remark}
In the case $E=\Bbb{R}^d$, $\mu=$Lebesgue measure, N. Krylov proved It\^{o}'s formula for the $L^p$-norm of a large class of $W^{1,p}$-valued stochastic processes in his fundamental paper \cite{NK}. In particular, Lemma 5.1 in that paper gives a pathwise It\^{o} formula for processes $u$ as in \eqref{eq:7.12}, which immediately implies \eqref{eq:7.1}. The proof, however, uses a smoothing technique by convoluting the process $u$ in $x$ with Dirac-sequence of smooth functions, which is not available in our more general case, where $(E,\mathcal{B})$ is just a standard measurable space with a $\sigma$-finite measure $\mu$, without further structural assumptions that we wanted to avoid to cover applications e.g. to underlying spaces $E$ which are fractals. Fortunately, the above It\^{o} formula in expectation is enough to prove all main results in this paper without any further assumptions. After the preparations above, its proof is quite simple.
\end{remark}

We recall the following well-known result (see e.g. Theorem 21.7 in \cite{HB}):
\begin{lemma}\label{lemma7.3}
Let $p\in[1,\infty)$, $v_n,v\in L^p(\mu)$ such that $v_n\rightarrow v$ in $\mu$-measure as $n\rightarrow\infty$ and
$$\lim_{n\rightarrow\infty}|v_n|_p=|v|_p.$$
Then
$$\lim_{n\rightarrow\infty}v_n=v~~\text{in}~~L^p(\mu).$$
\end{lemma}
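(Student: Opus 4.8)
The plan is to combine a subsequence argument with Fatou's lemma, the point being that convergence in $\mu$-measure only yields $\mu$-a.e.\ convergence along subsequences. Since $L^p(\mu)$ is a metric space, it suffices to show that every subsequence of $(v_n)$ admits a further subsequence converging to $v$ in $L^p(\mu)$. I would therefore fix an arbitrary subsequence, which still converges to $v$ in $\mu$-measure and still satisfies $|v_{n_k}|_p\to|v|_p$; then, writing $E=\bigcup_m E_m$ with $\mu(E_m)<\infty$ increasing, extracting a $\mu$-a.e.\ convergent subsequence on each $E_m$ and diagonalizing over $m$, I may pass to a further subsequence (not relabelled) with $v_{n_k}\to v$ $\mu$-a.e.\ on all of $E$.

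The core step is a nonnegativity-plus-Fatou argument. By convexity of $t\mapsto t^p$ on $[0,\infty)$ one has the elementary bound $|a-b|^p\le 2^{p-1}(|a|^p+|b|^p)$ for all $a,b\in\RR$ and $p\ge 1$, so that
\[
w_k:=2^{p-1}\big(|v_{n_k}|^p+|v|^p\big)-|v_{n_k}-v|^p\ge 0\quad \mu\text{-a.e.}
\]
Along the a.e.-convergent subsequence, $|v_{n_k}|^p\to|v|^p$ and $|v_{n_k}-v|^p\to 0$ $\mu$-a.e., hence $w_k\to 2^p|v|^p$ $\mu$-a.e. Applying Fatou's lemma to the nonnegative $w_k$ would give
\[
\int_E 2^p|v|^p\,d\mu\le \liminf_{k\to\infty}\int_E w_k\,d\mu=\liminf_{k\to\infty}\Big[2^{p-1}\big(|v_{n_k}|_p^p+|v|_p^p\big)-\int_E|v_{n_k}-v|^p\,d\mu\Big].
\]
Because $|v_{n_k}|_p\to|v|_p$ and $t\mapsto t^p$ is continuous, the bracketed constant part converges to $2^p|v|_p^p=\int_E 2^p|v|^p\,d\mu$, which is \emph{finite} since $v\in L^p(\mu)$. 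Using $\liminf(a_k-b_k)=\lim a_k-\limsup b_k$ when $(a_k)$ converges, and cancelling the finite quantity $\int_E 2^p|v|^p\,d\mu$ from both sides, I would obtain $\limsup_k\int_E|v_{n_k}-v|^p\,d\mu\le 0$, i.e.\ $v_{n_k}\to v$ in $L^p(\mu)$.

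This would prove that every subsequence of $(v_n)$ has a further subsequence converging to $v$ in $L^p(\mu)$, whence $v_n\to v$ in $L^p(\mu)$, as claimed. The only genuinely delicate point is the passage from convergence in $\mu$-measure to $\mu$-a.e.\ convergence on the $\sigma$-finite space, which is exactly why I would carry out the subsequence extraction and the diagonal argument over an exhausting sequence $E_m$ of finite-measure sets before invoking Fatou; the finiteness of $\int_E|v|^p\,d\mu$ is what licenses the cancellation that converts the Fatou inequality into the desired conclusion. The endpoint $p=1$ is covered, since there the convexity bound simply reads $|a-b|\le|a|+|b|$.
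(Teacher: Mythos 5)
Your proof is correct. Note that the paper itself does not prove this lemma: it is stated as a recollection of a well-known result and dispatched with a citation to Bauer's \emph{Measure and Integration Theory} (Theorem 21.7), so there is no internal proof to compare against. Your argument is essentially the classical Riesz argument that underlies that citation, made self-contained: reduce to a $\mu$-a.e.\ convergent subsequence (your diagonal extraction over an exhaustion by finite-measure sets correctly handles the $\sigma$-finite setting, where a.e.\ convergence is only available along subsequences), apply Fatou's lemma to the nonnegative functions $w_k=2^{p-1}\big(|v_{n_k}|^p+|v|^p\big)-|v_{n_k}-v|^p$, use the norm-convergence hypothesis to identify the limit of the dominating part, and cancel the finite quantity $2^p|v|_p^p$ to conclude $\limsup_k\int_E|v_{n_k}-v|^p\,d\mu\le 0$; the subsequence principle in the metric space $L^p(\mu)$ then upgrades this to convergence of the full sequence. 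The delicate points are all in order: splitting $\int_E w_k\,d\mu$ into its two parts is legitimate because $|v_{n_k}-v|^p\le 2^{p-1}(|v_{n_k}|^p+|v|^p)$ is integrable, the identity $\liminf_k(a_k-b_k)=\lim_k a_k-\limsup_k b_k$ is invoked only with $(a_k)$ convergent and finite, and the cancellation is licensed by $v\in L^p(\mu)$. What your proof buys, relative to the paper, is self-containedness: the reliance on an external textbook theorem is replaced by a complete elementary argument using nothing beyond Fatou's lemma and the convexity bound.
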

\textbf{Proof of Theorem \ref{ito}}~~
By Claim \ref{claim7.1} and Remark \ref{remark7.1}, we can find $f_n\in\mathcal{S}$, $n\in\Bbb{N}$, and $g_n\in\Bbb{L}^p(T;\ell_2)$, $n\in\Bbb{N}$, such that as $n\rightarrow\infty$
\begin{eqnarray}\label{eqn7.66}
f_n\rightarrow f~~\text{in}~~\Bbb{L}^p(T),
\end{eqnarray}
and
\begin{eqnarray}\label{eqn7.6}
g_n\rightarrow g ~~\text{in}~~\Bbb{L}^p(T;\ell_2).
\end{eqnarray}

For $n\in\Bbb{N}$, define
\begin{eqnarray*}\label{eqn7.7}
u_n(t):=u(0)+\int_0^tf_n(s)ds+\int_0^tg_n(s)dW(s).
\end{eqnarray*}

By \eqref{eq:7.12}, \eqref{eqn7.66}, \eqref{eqn7.6} and Claim \ref{claim7.2}, it follows that as $n\rightarrow\infty$,
\begin{eqnarray}\label{eq:7.13}
\int_0^\cdot f_n(s)ds\!\!\!\!\!\!\!\!&&\rightarrow\int_0^\cdot f(s)ds,\nonumber\\
\int_0^\cdot g_n(s)dW(s)\!\!\!\!\!\!\!\!&&\rightarrow\int_0^\cdot g(s)dW(s),\\
u_n\!\!\!\!\!\!\!\!&&\rightarrow u,\nonumber
\end{eqnarray}
in $L^p(\Omega;C([0,T];L^p(\mu)))$.

Applying the It\^{o} formula to the real-valued semi-martingale $|u_n(t,x)|_p^p$ for each $x\in E$, and integrating w.r.t. $x\in E$ and $\omega\in \Omega$, we obtain
\begin{eqnarray}\label{eq:7.3}
\Bbb{E}\int_E|u_n(t,x)|^p\mu(dx)=&&\!\!\!\!\!\!\!\!\Bbb{E}|u(0)|^p+\Bbb{E}\int_E\int_0^tp|u_n(s,x)|^{p-2}u_n(s,x)\cdot f_n(s,x)ds\mu(dx)\nonumber\\
&&\!\!\!\!\!\!\!\!+\frac{1}{2}p(p-1)\Bbb{E}\int_E\int_0^t|u_n(s,x)|^{p-2}\cdot|g_n(s,x)|^2_{\ell_2}ds\mu(dx).
\end{eqnarray}
Note that by Lemma \ref{lemma7.3} and \eqref{eq:7.13}
\begin{eqnarray*}
|u_n(s)|^{p-2}u_n(s)&&\!\!\!\!\!\!\!\!\rightarrow |u(s)|^{p-2}u(s)~~\text{in}~~L^{\frac{p}{p-1}}(\mu),\\
|u_n(s)|^{p-2}&&\!\!\!\!\!\!\!\!\rightarrow |u(s)|^{p-2}~~\text{in}~~L^{\frac{p}{p-2}}(\mu),
\end{eqnarray*}
as $n\rightarrow\infty$. Hence by \eqref{eqn7.66} and \eqref{eqn7.6} we may pass to the limit $n\rightarrow\infty$ in \eqref{eq:7.3} to get \eqref{eq:7.1}.
\hspace{\fill}$\Box$

\section*{Funding} Michael R\"ockner is supported by the Deutsche Forschungsgemeinschaft (DFG, German Research Foundation) through CRC 1283. Weina Wu is supported by the National Natural Science Foundation of China (NSFC) (No.11901285), China Scholarship Council (CSC) (No.202008320239) and DFG through CRC 1283. Yingchao Xie is supported by the NSFC (No.11931004, No.11771187).





\end{document}